\newcommand{\aut}[0]{\operatorname{Aut}}
\newcommand{\diff}[0]{\operatorname{Diff}}  
\newcommand{\bir}[0]{\operatorname{Bir}}
\newcommand{\Pic}[0]{\operatorname{Pic}}
\newcommand{\GL}[0]{\operatorname{GL}}
\newcommand{\Sym}[0]{\operatorname{Sym}}
\newcommand{\Ker}[0]{\operatorname{Ker}}
\newcommand{\SO}[0]{\operatorname{SO}}
\newcommand{\p}[0]{{\mathbb P}}
\newcommand{\z}[0]{{\mathbb Z}}
\newcommand{\n}[0]{{\mathbb N}}
\renewcommand{\c}[0]{{\mathbb C}}
\newcommand{\C}{{\mathbb C}}
\newcommand{\m}{{\mathfrak m}}
\renewcommand{\r}[0]{{\mathbb R}} 
\newcommand{\im}{{\mathbf i}}
\newcommand{\pr}{\mathrm{pr}}
\renewcommand{\epsilon}[0]{\varepsilon}
\newtheorem{thm}[equation]{Theorem}%
\newtheorem{prop}[equation]{Proposition}
\newtheorem{lem}[equation]{Lemma}
\newtheorem{lemdef}[equation]{Lemma and Definition}
\newtheorem{cor}[equation]{Corollary}
\theoremstyle{remark}
\newtheorem{rem}[equation]{Remark}
\newtheorem*{ack}{Acknowledgments}
\theoremstyle{definition}
\newtheorem{dfn}[equation]{Definition}
\newcommand{\doublecov}[0]{\zeta}
\begin{document}

\title[Real conic bundles and very transitive actions]{Geometrically rational real conic bundles\\ and very transitive actions}
\author{J\'er\'emy Blanc}
\email{Jeremy.Blanc@unibas.ch}
\address{
Mathematisches Institut,
Universit\"at Basel,
Rheinsprung 21,
CH-4051 Basel,
Schweiz}

\author{Fr\'ed\'eric Mangolte}
\email{mangolte@univ-savoie.fr}
\address{Laboratoire de Math\'ematiques,
Universit\'e de Savoie, 73376 Le Bourget du Lac Cedex, France.
Tel.: +33 (0)4 79 75 86 60, Fax: +33 (0)4 79 75 81 42}

\classification{14E07, 14P25, 14J26}
\keywords{real algebraic surfaces, rational surfaces,
geometrically rational surfaces, birational geometry,
algebraic automorphisms, very transitive actions, Cremona transformations}

\begin{abstract}
In this article we study the transitivity of the group of automorphisms of real algebraic surfaces. We characterize real algebraic surfaces with very transitive automorphism groups.  We give applications to the classification of real algebraic models of compact surfaces:  these applications yield new insight into the geometry of the real locus, proving several surprising facts on this geometry. 
This  geometry can be thought of as a half-way point between the biregular and birational geometries.
\end{abstract}

\maketitle

\section{Introduction}

The group of automorphisms of a complex algebraic variety is small: indeed, it is  finite in general. Moreover, the group of automorphisms is $3$-transitive only if the variety is $\p^1_\c$. On the other hand, it was recently proved that for a  surface $X(\r)$  birational to $\p^2_\r$, its group of automorphisms acts $n$-transitively on $X(\r)$ for any $n$. The main goal of this paper is to determine all real algebraic surfaces $X(\r)$ having a group of automorphisms which acts very transitively on $X(\r)$. For precise definitions and statements, see below.

The aim of this paper is to study the action of birational maps on the set of real points of a real algebraic variety. 
Let us emphasize a common terminological source of confusion about the meaning of what is a \emph{real algebraic variety} (see also the enlightening introduction of \cite{ko-topo-2000}). 
From the point of view of general algebraic geometry, a real variety $X$ is a variety defined over the real numbers, and a morphism is understood as being defined over all the geometric points. In most real algebraic geometry texts however, the algebraic structure considered corresponds to the algebraic structure of a neighbourhood of the real points $X(\r)$ in the whole complex variety -- or, in other words, the structure of a germ of an algebraic variety defined over $\r$.

From this point of view it is natural to view $X(\r)$ as a compact submanifold of~$\r^n$ defined by real polynomial equations, where~$n$ is some natural integer. 
Likewise, it is natural to say that a map $\psi \colon X(\r)\to Y(\r)$ is an \emph{isomorphism} if $\psi$ is induced by a birational map $\Psi \colon X\dasharrow Y$ such that $\Psi$ (respectively $\Psi^{-1}$) is regular at any point of $X(\r)$ (respectively of $Y(\r)$). 
In particular, $\psi \colon X(\r)\to Y(\r)$ is a diffeomorphism.
This notion corresponds to the notion of biregular maps defined in \cite[3.2.6]{bcr} for the structure of real algebraic variety commonly used in the context of real algebraic geometry.
To distinguish between the Zariski topology and the topology induced by the embedding of $X(\r) $ as a topological submanifold of $\r^n$, we will call the latter the \emph{Euclidean topology}. Throughout what follows, topological notions like connectedness or compactness will always refer to the Euclidean topology. 

Recall that a real projective surface is rational if it is birationally equivalent to the real projective plane, and that it is geometrically  rational if its complexification is birationally equivalent to the complex projective plane.
The number of connected components is a birational invariant. In particular, if $X$ is a rational projective surface, $X(\r)$  is connected. 

The paper \cite{hm3} proves that the group of automorphisms $\aut\bigl(X(\r)\bigr)$ acts $n$-transitively on $X(\r)$ for any $n$ and any rational real algebraic surface $X$. To study the case where $X(\r)$ is not connected, we have to refine the notion of $n$-transitivity. Indeed, if $X(\r)$ has non-homeomorphic connected components, then even the  group of self-homeomorphisms does not acts $2$-transitively.

\setcounter{equation}{-1}
\begin{dfn}
Let $G$ be a topological group acting continuously on a topological space $M$.
 We say that two $n$-tuples of distinct points $(p_1,\dots,p_n)$ and $(q_1,\dots,q_n)$ are \emph{compatible} if there exists an homeomorphism $\psi \colon M \to M$ such that $\psi(p_i)=q_i$ for each $i$. 
 The action of $G$ on $M$ is then said to be \emph{very transitive} if for any pair of compatible $n$-tuples of points $(p_1,\dots,p_n)$ and $(q_1,\dots,q_n)$ of $M$, there exists an element $g \in G$ such that $g(p_i)=q_i$ for each $i$. More generally, the action of $G$ is said to be very transitive \emph{on each connected component} if we require the above condition only in case, for each $i$, $p_i$ and $q_i$ belong to the same connected component of $M$.  
\end{dfn}

Up till now, it was not known when the automorphism group of a real algebraic surface is big. We give a complete answer to this question: this is one of the main result of this paper. Let $\#M$ be the number of connected components of a compact manifold $M$.

\begin{thm}\label{Thm:3compTrans}
Let $X$ be a nonsingular real projective surface. The group $\aut\bigl(X(\r)\bigr)$ is then very transitive on each connected component if and only if $X$ is geometrically rational and $\#X(\r) \leq3$.
 \end{thm}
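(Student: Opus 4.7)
The plan is to prove the two implications separately, using throughout the classification of geometrically rational real surfaces into minimal models (del Pezzo surfaces and real conic bundles over $\p^1_\r$) together with the theorem of \cite{hm3}, which already covers the case $\#X(\r) = 1$: a geometrically rational surface with connected nonempty real locus is known (Comessatti) to be rational over $\r$, so \cite{hm3} applies.

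For the necessity direction, I would first show that if $X$ is not geometrically rational, then $\aut\bigl(X(\r)\bigr)$ is too small to act very transitively on any component. The Enriques--Kodaira classification splits this into two cases: (a) surfaces with nonnegative Kodaira dimension, where every birational self-map becomes biregular on the minimal model and must preserve extra structure (the canonical class, or an elliptic or torus fibration), forcing orbits too small for $n$-transitivity when $n$ is large; and (b) surfaces ruled over a real curve of genus $\geq 1$, where the ruling is birationally unique and its induced foliation on $X(\r)$ is preserved by every element of $\bir(X)$, ruling out $n$-transitivity for $n \geq 3$. Next, I would show that for geometrically rational $X$ with $\#X(\r) \geq 4$, one can exhibit compatible $n$-tuples that cannot be realized: any automorphism accomplishing the required permutation of components would descend to a forbidden action on a suitable minimal model (on a real conic bundle, the induced permutation of components corresponds to a permutation of pieces of the discriminant locus on $\p^1_\r$ that is combinatorially impossible once four or more components are present).

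For the sufficiency direction, after \cite{hm3} handles the connected case, I would treat $\#X(\r) = 2$ and $3$ by explicit construction on minimal real conic bundle models. The automorphisms I would build come from two complementary sources: (i) fiber-preserving birational maps that are regular at every real point, used to obtain full transitivity inside each real fiber; and (ii) elementary links on the conic bundle (or twists by involutions attached to singular fibers) that act nontrivially on the base $\p^1_\r$, used to move real points between fibers while preserving the topological type of the components. Composing these produces a group large enough to realize any compatible $n$-tuple on each component, for every $n$.

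The main obstacle is the sufficiency for $\#X(\r) = 2$ and $3$: one must manufacture birational self-maps of $X$ that are regular at every real point (not merely birational over $\r$) and that realize prescribed compatible permutations of arbitrary finite subsets of each component. This requires precise control of the real singular fibers of the conic bundle, of the behavior of elementary links across them, and of the resulting diffeomorphism induced on $X(\r)$ — exactly the real-conic-bundle machinery that the body of the paper is organized to develop.
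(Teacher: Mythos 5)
Your reduction to minimal models, your treatment of the non-geometrically-rational case via Kodaira dimension, and your use of fibrewise twisting maps all match the paper. But there are two genuine gaps, one on each side of the equivalence.

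On the sufficiency side ($\#X(\r)=2,3$), your mechanism for moving points \emph{between} fibres does not work. Any birational self-map of a minimal conic bundle $(X,\pi)$ that preserves the fibration acts on the base through an automorphism of $\p^1_\r$ preserving $I(X,\pi)$; since $I(X,\pi)$ has $2r\geq 4$ boundary points, this group is finite, so fibre-preserving maps, elementary links of type II, and involutions attached to singular fibres can only send a given fibre to finitely many others. This is precisely why the paper does not argue with a single fibration: its key move (Corollary~\ref{Cor:23comp2conicbundles}, Lemma~\ref{Lem:KX4}, Proposition~\ref{Prp:TransivitySurface}) is to replace $X(\r)$ by an isomorphic model on a del Pezzo surface of degree $2$ or $4$ carrying a \emph{second} minimal conic bundle $\pi_2$ whose foliation is transversal to that of $\pi_1$ (after Lemma~\ref{no-two-singular} arranges that no real point is singular for both), and then to transport points by alternating motions along the two foliations, following a staircase path in $\p^1_\r\times\p^1_\r$. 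Without this second fibration — which is the content of the links of type IV you allude to only implicitly — the group you construct is not transitive on a component. (A further, smaller, omission is the passage from minimal to non-minimal surfaces: one must separate infinitely near centres into distinct real proper points, Proposition~\ref{Prp:SeparationPts}, and handle points lying on exceptional curves, Lemma~\ref{Lem:TransivityTangentConicBundle}.)

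On the necessity side for geometrically rational $X$ with $\#X(\r)\geq 4$, your proposed obstruction — that the required permutation of components is combinatorially impossible — is not the right one: very transitivity \emph{on each connected component} only ever asks for the identity permutation of components, so no contradiction arises from permutations. The paper instead shows $\aut\bigl(X(\r)\bigr)$ is not even $1$-transitive (Lemma~\ref{Lem:NonTransitivity}): either the minimal model is a del Pezzo surface of degree $1$ or $2$ with $\rho=1$ (these have $5$ or $4$ real components and are not conic bundles, a case your proposal does not cover), in which case $\aut\bigl(X(\r)\bigr)$ is countable by Corollary~\ref{Cor:DPcountable}; or the minimal model is a conic bundle with at least $8$ singular fibres, in which case $\bir(X_0)=\bir(X_0,\pi_0)$ by Theorem~\ref{Thm:Birapport} and the induced action on the base is finite, so orbits meet only finitely many fibres. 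You would need to supply one of these arguments (or an equivalent) to close the proof.
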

 
In the three component case, Theorem~\ref{thm:verytrans} below says that the very transitivity of $\aut(X(\r))$ can be determined by examining the set of possible permutations of connected components. 

\begin{thm}\label{thm:verytrans}
Let $X$ be a nonsingular real projective surface. The group $\aut\bigl(X(\r)\bigr)$ then has a very transitive action on $X(\r)$ if and only if the following hold:
\begin{enumerate}
\item\label{thm:verytrans.1}  $X$ is geometrically rational, and
\item\label{thm:verytrans.2} 
\begin{enumerate}
\item $\#X(\r)\leq 2$, or 
\item $\#X(\r)=3$, and there is no pair of homeomorphic connected components, or
\item $\#X(\r)=M_1\sqcup M_2 \sqcup M_3$, $M_1\sim M_2 \not\sim M_3$, and there is a morphism $\pi \colon X \to \p^1_{\r}$ whose general fibres are rational curves, and an automorphism of~$\p^1_{\r}$ which fixes $\pi(M_3)$ and exchanges $\pi(M_1),\pi(M_2)$, or
\item $\#X(\r)=M_1\sqcup M_2 \sqcup M_3$, $M_1\sim M_2 \sim M_3$, and there is a morphism $\pi \colon X \to \p^1_{\r}$ whose general fibres are rational curves, such that any permutation of the set of intervals  $\bigl\{\pi(M_1),\pi(M_2),\pi(M_3)\bigr\}$ 
 is realised by an automorphism of~$\p^1_{\r}$.
 \end{enumerate}
\end{enumerate}

Furthermore, when $\aut\bigl(X(\r)\bigr)$ is not very transitive, it is not even  $2$-transitive.
\end{thm}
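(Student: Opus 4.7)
The proof reduces, via Theorem~\ref{Thm:3compTrans}, to the case where $X$ is geometrically rational and $\#X(\r) \leq 3$, since otherwise even very transitivity on each component fails. Granting this, the central observation is that very transitivity on $X(\r)$ is equivalent to the conjunction of very transitivity on each component with realisability by some $f \in \aut(X(\r))$ of every permutation of the components induced by some self-homeomorphism of $X(\r)$. The forward implication is immediate; for the converse, given compatible $n$-tuples $(p_i)$ and $(q_i)$, apply $f$ realising the required component permutation so that each $f(p_i)$ lies in the component of $q_i$, and then conclude by componentwise very transitivity. Write $H_{\mathrm{top}}$ for the group of component permutations preserving homeomorphism type and $H \leq H_{\mathrm{top}}$ for those coming from $\aut(X(\r))$. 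Then very transitivity amounts to the equality $H = H_{\mathrm{top}}$, and conditions (a)--(d) are precisely designed to characterise when this equality holds.

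For the sufficient direction I would work through the cases. When $H_{\mathrm{top}}$ is trivial---the one-component case, the two-component case with non-homeomorphic components, and case~(b)---nothing additional is required. When $\#X(\r) = 2$ with homeomorphic components the needed swap comes from the classification of geometrically rational real surfaces with two real components: any such surface carries a conic fibration $X \to \p^1_\r$ whose two arcs of real fibres account for the two components, and the involution of the base exchanging these arcs lifts to an automorphism of $X$. In cases (c) and~(d) the hypothesis directly supplies an automorphism of $\p^1_\r$ permuting the intervals $\pi(M_i)$; the task is to lift it to an automorphism of $X$. Using that $\pi$ is a conic bundle and that its real fibres over the interiors of the $\pi(M_i)$ form a uniform family of circles in $M_i$, one constructs the lift by transporting fibres across the permuted intervals. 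The resulting birational self-map of $X$ is regular on $X(\r)$ thanks to the conic-bundle machinery developed earlier in the paper, and this lifting step is the technical heart of the sufficiency argument.

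For necessity together with the \emph{furthermore} statement, suppose condition (2) fails; then $\#X(\r) = 3$ and $H \subsetneq H_{\mathrm{top}}$, where $H_{\mathrm{top}}$ is either $\z/2$ or $S_3$. Every proper subgroup of $\z/2$ or of $S_3$ omits at least one transposition, so pick $\tau = (i\,j) \in H_{\mathrm{top}} \setminus H$ and points $p \in M_i$, $q \in M_j$. Since $M_i \sim M_j$ the pairs $(p,q)$ and $(q,p)$ are compatible; yet any $g \in \aut(X(\r))$ with $g(p) = q$ and $g(q) = p$ would swap $M_i$ with $M_j$ and fix the third component set-wise, inducing $\tau$ on components and contradicting $\tau \notin H$. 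Hence $\aut(X(\r))$ fails to act $2$-very-transitively, which establishes both the necessity and the \emph{furthermore} claim. The main obstacle I anticipate is the necessity in cases (c) and~(d): proving that $\tau \notin H$ whenever no conic fibration with the prescribed base-permutation property exists. This demands an $\aut$-equivariant minimal model analysis showing that any component-swapping automorphism of $X$ is birationally conjugate to one preserving a conic fibration satisfying the stated hypothesis, and it is at this step that the paper's structural results on real conic bundles must bear the weight.
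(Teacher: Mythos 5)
Your skeleton coincides with the paper's: reduce via Theorem~\ref{Thm:3compTrans} to geometrically rational surfaces with $\#X(\r)\leq 3$, observe that very transitivity is equivalent to componentwise very transitivity plus realisability in $\aut\bigl(X(\r)\bigr)$ of every homeomorphism-induced permutation of components, and then characterise the realisable permutations by interval permutations on the base of a conic fibration. But the decisive step is exactly the one you leave as an ``anticipated obstacle'': the identification of your group $H$ (component permutations realised by $\aut\bigl(X(\r)\bigr)$) with the group of permutations of the intervals $\pi(M_i)$ realised by $\aut(\p^1_\r)$. This is the content of Proposition~\ref{Prp:ExchangeComp} and its non-minimal upgrade Lemma~\ref{Lem:Magic}, and it is not automatic: a priori an automorphism of $X(\r)$ could swap two components through a composition of birational maps respecting no conic bundle structure whatsoever. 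The paper proves it by decomposing an arbitrary birational self-map permuting the components into Iskovskikh elementary links, checking that only links of type $II$ (conic-bundle maps) and type $IV$ (Geiser involutions of degree-$2$ del Pezzo surfaces) occur, and then the key geometric fact that a Geiser involution sends each connected component of the real part to itself, because each component is a double cover of the interior of one oval of the branch quartic. Only after this does the component permutation descend to $\bir(X,\pi)$ acting on the base, giving the automorphism of $\p^1_\r$ required in conditions (c) and (d). Without that analysis your necessity argument (choosing a transposition $\tau\notin H$ and a compatible pair $(p,q)$, $(q,p)$) cannot start, since you have no handle on $H$; so the proposal has a genuine gap precisely at the theorem's crux.

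Two further points are unaddressed. First, the ``furthermore'' clause in the excluded cases: Theorem~\ref{Thm:3compTrans} only yields failure of very transitivity when $X$ is not geometrically rational or $\#X(\r)>3$, whereas failure of $2$-transitivity (nontrivial, for instance, for a non-geometrically-rational surface with connected real part) requires the separate arguments of Lemma~\ref{Lem:NonTransitivity}: Kodaira dimension analysis, countability of $\aut\bigl(X(\r)\bigr)$ when the minimal model has $\rho=1$, and $\bir(X_0)=\bir(X_0,\pi_0)$ with finite action on the base when there are many singular fibres (Theorem~\ref{Thm:Birapport}). Second, in the sufficiency direction your ``lift by transporting fibres'' produces, at best, an automorphism of the real part of the \emph{minimal} conic bundle preserving $\pi$ (Corollary~\ref{Cor:BirAutI}); to obtain an element of $\aut\bigl(X(\r)\bigr)$ for a non-minimal $X$ one must still separate infinitely near centres (Proposition~\ref{Prp:SeparationPts}) and use componentwise very transitivity (Corollary~\ref{Cor:TransitivByComponent23}) to move the blown-up point sets back onto themselves, which is how Lemma~\ref{Lem:Magic} packages the argument. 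These last two gaps are patchable by citing the earlier sections, but as written the proposal leaves the main equivalence of the theorem unproved.
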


This theorem will be proved in Section~\ref{Sec:Proofs}.
Note that when $\#X(\r)> 3$, either any element of $\aut\bigl(X(\r)\bigr)$ preserves a conic bundle structure (Theorem~ \ref{Thm:Birapport}), or $\aut\bigl(X(\r)\bigr)$ is countable  (Corollary~\ref{Cor:DPcountable}): in either case $\aut\bigl(X(\r)\bigr)$ is not $1$-transitive.

These two theorems apply to the classification of algebraic models of real surfaces.
Up to this point in the paper $X(\r)$ is considered as a submanifold of some $\r^n$. Conversely, let $M$ be a compact $\mathcal{C}^\infty$-manifold. 
By the Nash-Tognoli theorem \cite{to}, every such $M$ is diffeomorphic to a nonsingular real algebraic subset of $\r^m$ for some $m$. 
Taking the Zariski closure in $\p^m$ and applying Hironaka's resolution of singularities \cite{hiro}, it follows that $M$ is in fact diffeomorphic to the set of real points $X(\r)$ of a nonsingular projective algebraic variety $X$ defined over $\r$. 
Such a variety $X$ is called an \emph{algebraic model} of $M$. A natural question is to classify the algebraic models of $M$ up to isomorphism for a given manifold $M$.

There are several recent results about algebraic models and their automorphism groups \cite{bh,hm3,hm4,km1}.
For example, when $M$ is $2$-dimensional, and admits a real rational algebraic model, this rational algebraic model is unique \cite{bh}. 
In other words, if $X$ and $Y$ are two rational real algebraic surfaces, then $X(\r)$ and $Y(\r)$ are isomorphic if and only if there are homeomorphic. We extend the classification of real algebraic models to geometrically rational surfaces.

\begin{thm}\label{thm:unicity}
Let $X,Y$ be two nonsingular geometrically rational real projective surfaces, and assume that $\#
X(\r)\leq 2$. The surface  $X(\r)$ is then isomorphic to $Y(\r)$ if and only
if $X$ is birational to $Y$ and $X(\r)$ is homeomorphic to $Y(\r)$. This is false in general when $\#X(\r)\geq 3$.
\end{thm}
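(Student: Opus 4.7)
The forward direction is immediate from definitions: any isomorphism $\psi\colon X(\r) \to Y(\r)$ is by definition induced by a birational map $X \dashrightarrow Y$, so $X$ and $Y$ are birational, and $\psi$ is a diffeomorphism and in particular a homeomorphism. The content is the converse.

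For the converse, my plan is to start with an arbitrary birational map $\Phi\colon X \dashrightarrow Y$, which exists because $X$ and $Y$ are birational, and to modify it by pre- and post-composing with elements of the very transitive automorphism groups provided by Theorem~\ref{thm:verytrans}, until the resulting map is regular on $X(\r)$ with inverse regular on $Y(\r)$. Concretely, I would first resolve $\Phi$ through a common smooth model
\[
X \xleftarrow{\pi_X} Z \xrightarrow{\pi_Y} Y,
\]
and observe that each of $\pi_X, \pi_Y$ is a sequence of blow-ups either at pairs of complex conjugate points, which do not affect the real locus, or at real points, each of which changes the topology by a connected sum with $\r\p^2$. The topological hypothesis $X(\r) \simeq Y(\r)$ then forces the numbers of real blow-ups on the two sides to match.

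The heart of the argument will be to pair up these real blow-ups. The goal is to show that, after composing with suitable $\alpha \in \aut(X(\r))$ and $\beta \in \aut(Y(\r))$, every remaining real base point of $\beta \circ \Phi \circ \alpha^{-1}$ on the $X$-side can be matched with a real base point on the $Y$-side, so that the two together form an elementary link over a real point whose net effect on the real loci is a biregular isomorphism. Very transitivity is exactly what lets me pick $\alpha, \beta$ so as to realise any such pairing: by Theorem~\ref{thm:verytrans}, the hypothesis $\#X(\r) \leq 2$ guarantees that $\aut(X(\r))$ and $\aut(Y(\r))$ act very transitively on the whole of $X(\r)$ and $Y(\r)$, so any finite tuple of real points on one side can be moved to any compatible tuple, including swaps between the two components when these are homeomorphic. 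Iterating, I expect to obtain a birational map $\tilde\Phi\colon X \dashrightarrow Y$ with no real base points on either side, which induces the desired isomorphism $X(\r) \to Y(\r)$. The main obstacle I anticipate is the bookkeeping of how real base points on $X$ and $Y$ actually pair off through a Sarkisov-type factorisation of $\Phi$, and verifying that very transitivity is always sufficient to realise the necessary pairing.

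For the negative statement when $\#X(\r) \geq 3$, my plan is to exhibit a counterexample drawn from the obstructions recorded in Theorem~\ref{thm:verytrans}. I would choose two birational geometrically rational surfaces $X, Y$ with three pairwise homeomorphic real components, both carrying a conic bundle structure over $\p^1_\r$, such that condition~(d) of Theorem~\ref{thm:verytrans} is satisfied for $X$ but not for $Y$: for instance, the three intervals $\pi_X(M_i) \subset \p^1_\r$ admit all permutations via $\aut(\p^1_\r)$, whereas the intervals $\pi_Y(M_i)$ do not. Then $X(\r) \simeq Y(\r)$ topologically and $X$ is birational to $Y$, but any algebraic isomorphism $X(\r) \to Y(\r)$ would conjugate $\aut(X(\r))$ onto $\aut(Y(\r))$, transporting very transitivity and thus contradicting the failure of~(d) for $Y$.
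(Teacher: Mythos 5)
Your plan for the converse has a structural flaw that no amount of bookkeeping can repair: pre- and post-composing $\Phi$ with elements of $\aut\bigl(X(\r)\bigr)$ and $\aut\bigl(Y(\r)\bigr)$ can never remove a real base point. Indeed, if $p\in X(\r)$ is a base point of $\Phi$ and $\beta\Phi\alpha^{-1}$ were regular at $\alpha(p)$, then $\Phi=\beta^{-1}(\beta\Phi\alpha^{-1})\alpha$ would be regular at $p$, since $\alpha,\beta^{-1}$ are regular at all real points; so the double coset $\aut\bigl(Y(\r)\bigr)\,\Phi\,\aut\bigl(X(\r)\bigr)$ contains an isomorphism of real parts only if $\Phi$ already was one. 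For the same reason your key step is false as stated: an elementary link centred at a real point is undefined at that point, so its ``net effect on the real loci'' is never a biregular isomorphism. One must build a \emph{different} birational map, and this is where the hypothesis ``$X$ birational to $Y$'' genuinely enters through the conic-bundle invariant $I(\cdot,\pi)$, which your proposal never uses. The paper's route is: reduce to $\#X(\r)=2$ (the connected case is the uniqueness of rational models), pass to minimal models $X_0,Y_0$, show $X_0(\r)\cong Y_0(\r)$ by Theorem~\ref{Thm:Diff+Bir=DiffBir} (whose proof rests on Theorem~\ref{Thm:Birapport} and the interval classification of minimal conic bundles), then replace $X\to X_0$ and $Y\to X_0$ by blow-ups of \emph{distinct proper real points} (Proposition~\ref{Prp:SeparationPts}), match the two point sets component by component using very transitivity on components (Corollary~\ref{Cor:TransitivByComponent23}) together with the fact that for $r=2$ the two intervals can always be swapped by an automorphism of $\p^1_\r$ (Proposition~\ref{Prp:ExchangeComp}); this is packaged as Lemma~\ref{Lem:Magic}. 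Only at this last stage does an isomorphism downstairs matching the blown-up centres lift to an isomorphism upstairs.

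Your counterexample for $\#X(\r)\geq 3$ also does not exist as described. If $X$ and $Y$ are birational non-rational surfaces with conic bundle structures, Theorem~\ref{Thm:Birapport} forces $I(X,\pi_X)$ and $I(Y,\pi_Y)$ to be equivalent under $\aut(\p^1_\r)$, so the groups of realisable interval permutations coincide: you cannot have all permutations available on one side and not on the other. Worse, when the three components are pairwise homeomorphic the uniqueness statement is actually \emph{true}: the real blow-up centres must then be distributed uniformly among the components, and Lemma~\ref{Lem:Magic} with the identity permutation produces an isomorphism. The genuine failure uses non-homeomorphic components over an asymmetric interval configuration: take a minimal conic bundle with three intervals admitting no nontrivial permutation by $\aut(\p^1_\r)$, and blow up one real point in $M_1$ for $X$ and one real point in $M_2$ for $Y$. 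Both real loci are homeomorphic to $\r\p^2\sqcup S^2\sqcup S^2$ and $X,Y$ are birational, but any isomorphism $X(\r)\to Y(\r)$ would have to send the $\r\p^2$ component to the $\r\p^2$ component and hence, by Proposition~\ref{Prp:ExchangeComp} (via Lemma~\ref{Lem:Magic}), produce an automorphism of $\p^1_\r$ exchanging the corresponding intervals, which does not exist; note that here \emph{neither} automorphism group is very transitive, so your transitivity-transfer test detects nothing in this situation.
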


Recall that a nonsingular projective surface is minimal if any birational morphism to a nonsingular surface is an isomorphism.
We have the following rigidity result on minimal geometrically rational real surfaces.

\begin{thm}\label{Thm:Diff+Bir=DiffBir}
    Let  $X$ and $Y$ be two minimal geometrically rational real projective surfaces, and assume that either $X$ or $Y$ is non-rational. The following are then equivalent:
\begin{enumerate}
\item
$X$ and $Y$ are birational.
 \item
 $X(\r)$ and $Y(\r)$  are isomorphic.
  \end{enumerate}
  \end{thm}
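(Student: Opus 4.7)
The implication $(2) \Rightarrow (1)$ is immediate from the definition given at the start of the paper: an isomorphism $\psi \colon X(\r) \to Y(\r)$ of real loci is by definition induced by a birational map $\Psi \colon X \dashrightarrow Y$, so in particular $X$ and $Y$ are birational.

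For $(1) \Rightarrow (2)$, first observe that $\r$-rationality is a birational invariant, so the hypothesis that one of $X$, $Y$ is non-rational forces both to be non-rational. The plan is to invoke the classical Comessatti--Iskovskikh classification of minimal geometrically rational real surfaces: every such surface is either $\p^2_\r$, a quadric with Picard rank $1$, a del Pezzo surface of Picard rank $1$, or a minimal conic bundle over $\p^1_\r$ of Picard rank $2$. Among these, the non-rational ones are del Pezzo surfaces of degree $1 \leq d \leq 4$ of specific real types together with the minimal conic bundles whose discriminant is sufficiently rich to obstruct $\r$-rationality.

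Next I would apply Iskovskikh's factorization theorem in its real form: any birational map $\Phi \colon X \dashrightarrow Y$ between two minimal geometrically rational real surfaces decomposes into a chain of elementary Sarkisov links. The crucial claim is that within the restricted class of non-rational minimal models listed above, each elementary link connecting two such surfaces is real-regular on both sides, i.e.\ its base locus and that of its inverse both lie in $X \setminus X(\r)$ and $Y \setminus Y(\r)$ respectively. For links between conic bundles this reduces to the fact that the admissible elementary transformations in the non-rational range are centred at singular points of non-real fibres, so they do not disturb $X(\r)$; for links involving del Pezzo surfaces of small degree the relevant Bertini-- and Geiser-type involutions, and the birational contractions of $(-1)$-curve orbits, must be checked to have indeterminacies supported on the non-real part under the non-rationality hypothesis.

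The main obstacle is precisely this case-by-case verification: one has to go through the finite list of types of minimal non-rational geometrically rational real surfaces and, for each, describe the Sarkisov links to every other type and check that the indeterminacy locus avoids the real points. This uses in an essential way the detailed description of real conic bundles and real del Pezzo surfaces assembled in the earlier sections of the paper. Once the verification is complete, the composition of the elementary links realising $\Phi$ restricts to an isomorphism $X(\r) \to Y(\r)$, yielding $(1) \Rightarrow (2)$.
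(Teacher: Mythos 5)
Your reduction to the classification and to a Sarkisov-link decomposition starts along the same lines as the paper, but the ``crucial claim'' on which your argument rests -- that every elementary link between non-rational minimal models has its base locus (and that of its inverse) supported away from the real points -- is false, and the case-by-case verification you defer would not go through. In the Picard rank one case, the non-rational minimal surfaces are del Pezzo surfaces of degree $1$ or $2$, and for degree $2$ the links are conjugates of \emph{Bertini} involutions whose base points are always \emph{real} (this is exactly Proposition~\ref{Prop:DP1DP2rho1}, where moreover $\aut\bigl(X(\r)\bigr)=\aut(X)\subsetneq\bir(X)$, so plenty of birational maps between such surfaces genuinely fail to be isomorphisms of real loci). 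In the conic bundle case, links of type $II$ centred at a real point of a nonsingular real fibre are perfectly admissible between minimal non-rational conic bundles (blowing up such a point and contracting the strict transform of its fibre keeps the bundle minimal), so the decomposition of a given birational map will in general involve links that are not regular along $X(\r)$. Hence the composition of links realising the given map $\Phi$ need not restrict to an isomorphism $X(\r)\to Y(\r)$, and no amount of checking the finite list will repair this: the statement to prove is the existence of \emph{some} isomorphism of real loci, not the real-regularity of $\Phi$.

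The paper's route is accordingly different. For $\rho=1$ it uses the structure of the links (conjugated Bertini involutions) only to conclude that $X$ and $Y$ are in fact biregularly isomorphic as real surfaces, which trivially gives an isomorphism of real parts even though the original map is not one. For the conic bundle case it extracts from the link decomposition merely that $(X,\pi_X)$ and $(Y,\pi_Y)$ are birational as conic bundles (Theorem~\ref{Thm:Birapport}), hence that the interval invariants $I(X,\pi_X)$ and $I(Y,\pi_Y)$ agree up to $\aut(\p^1_\r)$, and then \emph{constructs a new map}: both bundles are replaced by exceptional ones (Lemma~\ref{Lem:IsoEx}), realised on affine models $y^2+z^2=Q(x)$ (Lemma~\ref{Lem:AffEx}), where equality of intervals forces $Q_Y=\lambda Q_X$ with $\lambda>0$ and $(x,y,z)\mapsto(x,\sqrt{\lambda}\,y,\sqrt{\lambda}\,z)$ is the desired isomorphism of real parts (Corollary~\ref{Prp:Equivalences}, Theorem~\ref{thm:isom}). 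You would need to replace your real-regularity claim by an argument of this kind; as written, the proposal has a genuine gap at its central step. (Minor point: the non-rational minimal surfaces with $\rho=1$ have degree $1$ or $2$ only, not $1\leq d\leq 4$.)
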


In this work, we classify the birational classes of real conic bundles and correct an error contained in the literature (Theorem~\ref{Thm:Birapport}). 
It follows that the only geometrically rational surfaces $X(\r)$ for which equivalence by homeomorphism implies equivalence by isomorphism are the connected ones. In particular, this yields a converse to \cite[Corollary~8.1]{bh}.
\begin{cor}\label{Cor:Models}
Let $M$ be a compact $\mathcal{C}^\infty$-surface. The surface $M$ then admits a unique geometrically rational model if and only if the following two conditions hold:
\begin{enumerate}
\item $M$ is connected, and
\item  $M$ is non-orientable or $M$ is orientable with genus $g(M) \leq 1$.
\end{enumerate}
\end{cor}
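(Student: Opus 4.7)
I plan to deduce the corollary from Theorem~\ref{thm:unicity} together with two classical results of Comessatti: (i) a smooth projective geometrically rational real surface is rational if and only if its real locus is non-empty and connected, and (ii) every connected component of the real locus of a geometrically rational real surface is diffeomorphic to $S^2$, to $T^2$, or to a non-orientable closed surface.

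For the direction $(\Leftarrow)$, assume $M$ is connected and is either non-orientable or orientable with $g(M)\leq 1$. Each such $M$ admits an explicit rational model: $S^2$ and $T^2$ arise as real loci of smooth real quadrics in $\p^3_\r$, $\r\p^2$ is the real locus of $\p^2_\r$, and the higher non-orientable surfaces are obtained by iterated blow-ups of $\p^2_\r$ at real points. Let $X$ and $Y$ be two geometrically rational models of $M$. Since $X(\r)=Y(\r)=M$ is connected and non-empty, Comessatti's rationality criterion (i) forces both $X$ and $Y$ to be rational, and in particular they are birational (to $\p^2_\r$). Theorem~\ref{thm:unicity} applied with $\#X(\r)=1\leq 2$ then yields the required isomorphism $X(\r)\cong Y(\r)$, whence uniqueness.

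For the direction $(\Rightarrow)$, suppose $M$ fails the stated conditions. If $M$ is connected and orientable with $g(M)\geq 2$, then (ii) forbids $M$ from being the real locus of a geometrically rational surface, so no model exists and ``unique geometrically rational model'' fails vacuously. If $M$ is disconnected, I will exhibit two non-isomorphic geometrically rational models. When $\#M\geq 3$, this is precisely the non-uniqueness promised at the end of Theorem~\ref{thm:unicity}. When $\#M=2$, I use the classification of real conic bundles provided by Theorem~\ref{Thm:Birapport} together with the classification of minimal real del Pezzo surfaces to exhibit two minimal non-rational geometrically rational models $X,Y$ of $M$ that are not birational to each other---for instance two real conic bundles with different numbers of real singular fibres, or configurations obtained by non-birational blow-ups at conjugate pairs of complex points. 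Theorem~\ref{thm:unicity} then ensures that $X(\r)\not\cong Y(\r)$.

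The delicate step in this plan is producing the explicit pair of non-birational models in the two-component case; here one must genuinely draw on the classification of real conic bundles and del Pezzo surfaces developed earlier in the paper, since Theorem~\ref{thm:unicity} only translates the desired non-isomorphism into the language of birational geometry without producing any examples by itself. Everything else in the argument reduces to applying the rationality criterion of Comessatti on one side and Theorem~\ref{thm:unicity} on the other.
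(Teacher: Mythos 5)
Your left-to-right direction is fine and is essentially the paper's: for connected $M$, Comessatti's rationality criterion reduces uniqueness to the uniqueness of rational models, which is the connected case of Theorem~\ref{thm:unicity} (i.e.\ \cite[Corollary~8.1]{bh}). The genuine gap is in the converse, exactly at the step you flag as delicate: when $M$ is disconnected you must actually produce a second, non-isomorphic model, and both instances you propose fail. Two \emph{minimal} conic bundle models of the same $M$ cannot have different numbers of real singular fibres: for a minimal non-rational conic bundle the real singular fibres lie over the $2r$ boundary points of $I(X,\pi)$, and $r=\#X(\r)$ (Proposition~\ref{Prp:TopMinimal}), so this number is determined by the topology of $M$. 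Likewise, blowing up conjugate pairs of imaginary points is a birational operation, so it can never produce a model in a new birational class; and since for $\#M=2$ Theorem~\ref{thm:unicity} says precisely that two models in the \emph{same} birational class with homeomorphic real parts are isomorphic, the only possible source of non-uniqueness when $\#M=2$ is a second birational class -- which your two constructions cannot reach.

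What does work, and is what the paper's proof uses in compressed form when it invokes Theorem~\ref{Thm:Birapport} and Proposition~\ref{Prop:DP1DP2rho1}, is to vary the interval configuration: keep the number $r\geq 2$ of intervals fixed but choose a configuration of the $2r\geq 4$ boundary points whose cross-ratios make it inequivalent under $\aut(\p^1_\r)$ to the given $I(X,\pi)$; realise it by a minimal conic bundle (Lemma~\ref{lem:minimaldp}), and blow up real and imaginary points to reproduce the topology of $M$. Theorem~\ref{Thm:Birapport} then shows the new surface is not birational to the original model, hence its real part is not isomorphic to it (an isomorphism of real parts is by definition induced by a birational map). If the minimal model of the given model is instead a del Pezzo surface of degree $1$ or $2$ with $\rho=1$, Proposition~\ref{Prop:DP1DP2rho1} shows its birational class contains no conic bundle model, so the same construction again yields a second birational class. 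Two smaller points: your appeal, for $\#M\geq 3$, to the sentence ``this is false in general when $\#X(\r)\geq 3$'' of Theorem~\ref{thm:unicity} is not a proof, since that remark asserts the existence of some such surfaces rather than non-uniqueness for every such $M$; and a disconnected $M$ need not admit any geometrically rational model at all (for instance if it has a torus component), so the contrapositive should be phrased as ``either no model, or two non-isomorphic models'' rather than a promise to exhibit two.
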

For $M$ orientable with $g(M) > 1$, no uniqueness result -- even very weak --holds. We can therefore ask what the simplest algebraic model for such an $M$ should be. This question is studied in the forthcoming paper \cite{hm5}.

\medskip

Another way of measuring the size of $\aut(X(\r))$ was used in \cite{km1}, where it is proved that for any rational surface $X$, $\aut\bigl(X(\r)\bigr)\subset \diff\bigl(X(\r)\bigr)$ is dense for the strong topology. 
 For non geometrically rational surfaces and for most of the non-rational geometrically rational surfaces, the group  $\aut\bigl(X(\r)\bigr)$ cannot be dense. 
 The above paper left the question of density open only for certain geometrically rational surfaces with $2$, $3$, $4$ or $5$ connected components. 
 One by-product of our results is the non-density of  $\aut(X(\r))$ for most surfaces with at least $3$ connected components -- see Proposition~\ref{Prp:Density}.
 
 Let us mention some  other papers on automorphisms of real projective surfaces.
In \cite{rv}, it is proved that $\aut\bigl(\p^2(\r)\bigr)$ is generated by linear automorphisms
 and certain real algebraic automorphisms of degree 5. 
The paper \cite{hm4} is devoted to the study of very transitive actions and uniqueness of models for some kind of singular rational surfaces.

\subsection*{Strategy of the proof}
In the proof of Theorem~\ref{Thm:3compTrans}, the main part concerns minimal conic bundles. We first prove that two minimal conic bundles are isomorphic if they induce the same intervals on the basis. Given a set of intervals, one choose the most special conic bundle, the so-called exceptional conic bundle, to write explicitly the automorphisms and to obtain a fiberwise transitivity. We then use the most general conic bundles which come with distinct foliations on the same surface. The foliations being transversal, this yields the very transivity of the automorphism group in the minimal case. 
\subsection*{Outline of the article}
In Section~\ref{Sec:Nota} we fix notations and  in Section~\ref{Sec:Min}  we recall the classification of minimal geometrically rational real surfaces. 

Section~\ref{Sec:conicbundle}, which constitutes the technical heart of the paper, is devoted to conic bundles, especially  minimal ones. We give representative elements of isomorphism classes, and explain the links between the various conic bundles.

In Section~\ref{Sec:ConicBunDPS}, we investigate real surfaces which admit two conic bundle structures. In particular, we  show that these are del Pezzo surfaces, and give descriptions of the possible conic bundles on these surfaces.
Section~\ref{Sec:Equiv} is devoted to the proof of Theorem~\ref{Thm:Diff+Bir=DiffBir}. We firstly correct an inaccuracy in the literature, by proving that if two surfaces admitting a conic bundle structure are birational, then the birational map may be chosen so that it preserves the conic bundle structures. We then strengthen this result to isomorphisms between real parts when the surfaces are minimal, before proving Theorem~\ref{Thm:Diff+Bir=DiffBir}.

In Section~\ref{Sec:VeryTransitive}, we prove that if the real part  of a minimal geometrically rational surface has $2$ or $3$ connected components, then its automorphism group is very transitive on each connected component. In Section~\ref{Sec:RealAlgModels}, we prove the same result for non-minimal surfaces. We show how to separate infinitely close points, which is certainly one of the most counter-intuitive aspects of our geometry, and was first observed in \cite{bh} for rational surfaces. We also prove the uniqueness of models in many cases. 

In Section~\ref{Sec:Proofs}, we then use all the results of the preceding sections to prove the main results stated in the introduction (except Theorem~\ref{Thm:Diff+Bir=DiffBir}, which is proved in Section~\ref{Sec:Equiv}).

\begin{ack}
We are grateful to the referee
for helpful remarks that enabled us to shorten several proofs and improve the presentation of this article.
\end{ack}

\section{Notation}\label{Sec:Nota}

Throughout what follows, by a variety we will mean an algebraic variety, which may be real or complex (i.e.\ defined over $\r$ or $\c$). 
If the converse is not expressly stated all our varieties will be projective and all our surfaces will be nonsingular and geometrically rational (i.e.\ rational over $\c$).

Recall that a real variety $X$ may be identified with a pair $(S,\sigma)$, where $S$ is a complex variety and $\sigma$ is an anti-holomorphic involution on $S$; by abuse of notation we will write $X=(S,\sigma)$. 
Then, $S(\c)=X(\c)$ denotes the set of complex points of the variety, and $X(\r)=S(\c)^{\sigma}$ is the set of real points. 
A point $p\in X$ may be real (if it belongs to $X(\r)$), or imaginary (if it belongs to $X(\c)\backslash X(\r)$). If $X(\r)$ is non empty (which will be the case for all our surfaces), then $\Pic(X)\cong\Pic(S)^{\sigma}$, \cite[I.(4.5)]{Sil}. 
As we only work with regular surfaces (i.e. $q(X)=q(S)=0$), the Picard group is isomorphic to the N\'eron-Severi group, and $\rho(S)$ and $\rho(X)$ will denote the rank of $\Pic(S)$ and $\Pic(X)$ respectively.  
Recall that $\rho(X)\leq \rho(S)$.  We denote by $K_X\in\Pic(X)$ the canonical class, which may be identified with $K_S$.  
The intersection of two divisors of $\Pic(S)$ or $\Pic(X)$ will always denote the usual intersection in $\Pic(S)$.

We will use the classical notions of morphisms, rational maps, isomorphisms and automorphisms between real or complex varieties. 
Moreover, if $X_1$ and $X_2$ are two real varieties, an isomorphism \emph{between real parts} $X_1(\r)\stackrel{\psi}{\to} X_2(\r)$ is a birational map $\psi\colon X_1\dasharrow X_2$ such that $\psi$ (respectively $\psi^{-1}$) is regular at any point of $X_1(\r)$ (respectively of $X_2(\r)$). 
This endows $X_{1}(\r)$ with a structure of a germ of algebraic variety defined over $\r$ (as in \cite[3.2.6]{bcr}), whereas the structure of $X_{1}$ is the one of an algebraic variety.

This notion of isomorphism between real parts gives rise to a geometry with rather unexpected properties comparing to those of the biregular geometry or the birational geometry. For example, let $\alpha \colon X_1(\r) \to X_2(\r)$ be an isomorphism, and $\varepsilon \colon Y_1 \dasharrow X_1$, $\eta \colon Y_2 \dasharrow X_2$ be two birational maps; the map $\psi := \varepsilon^{-1}\alpha\eta$ is a well-defined  birational map. Then $\psi$ can be an isomorphism $Y_1(\r) \to Y_2(\r)$ even if neither $\varepsilon$, nor $\eta$ is an isomorphism between real parts. In the same vein, let $\alpha \colon X_1(\r) \to X_2(\r)$ be an isomorphism, and let $\eta_{1}\colon Y_{1}\to X_{1}$ and $\eta_{2}\colon Y_{2}\to X_{2}$ be two birational morphisms which are the blow-ups of only real points (which may be proper or infinitely near points of $X_{1}$ and $X_{2}$). If $\alpha$ sends the points blown-up by $\eta_{1}$ on the points blown-up by $\eta_{2}$, then $\beta=(\eta_{2})^{-1}\alpha\eta_{1}\colon Y_{1}(\r)\to Y_{2}(\r)$ is an isomorphism.

Using $\aut$ and $\bir$ to denote respectively the group of automorphisms and birational self-maps of a variety, we have the following inclusions for the groups associated to $X=(S,\sigma)$:
\[\begin{array}{ccccc}
\aut(S) && \subset && \bir(S)\\
\cup && && \cup\\
\aut(X)&\subset& \aut\bigl(X(\r)\bigr)&\subset &\bir(X)\;.\end{array}\]

By $\p^n$ we mean the projective $n$-space, which may be complex or real depending on the context. It is unique as a complex variety -- written $\p^n_{\c}$. 
However, as a real variety, $\p^n$ may either be  $\p^n_{\c}$ endowed with the standard anti-holomorphic involution, written $\p^n_{\r}$, or only when  $n$ is odd, $\p^n_{\c}$ with a special involution with no real points, written $(\p^n,\emptyset)$. 
To lighten notation, and since we never speak about $(\p^1,\emptyset)(\r)$ we write $\p^1(\r)$ for $\p^1_{\r}(\r)$. 

\section{Minimal surfaces and minimal conic bundles}\label{Sec:Min}

The aim of this section is to reduce our study of geometrically rational surfaces to surfaces which admit a minimal conic bundle structure. We first recall the classification of geometrically rational surfaces (see \cite{Sil} for an introduction).
The proofs of Theorems~\ref{thm:verytrans} and~\ref{Thm:Diff+Bir=DiffBir} will then split into three cases: rational, del Pezzo with $\rho=1$, and minimal conic bundle. 
The rational case is treated in \cite{hm3} and
Proposition~\ref{Prop:DP1DP2rho1} below states the case of a del Pezzo surface with $\rho=1$.
 


  \begin{dfn}\label{dfn.birat.conic}
  A \emph{conic bundle} is a pair $(X,\pi)$ where $X$ is a surface and $\pi$ is a morphism $X\rightarrow \p^1$, where any fibre of $\pi$ is isomorphic to a plane conic.   If $(X,\pi)$ and  $(X',\pi')$ are two conic bundles, a \emph{birational map of conic bundles} $\psi\colon (X,\pi)\dasharrow (X',\pi')$ is a birational map $\psi\colon X\dasharrow X'$ such that there exists an automorphism $\alpha$ of $\p^1$ with $\pi'\circ\phi=\pi\circ\alpha$.
  \end{dfn}
  

We will assume throughout what follows that if $X$ is real, then the basis is $\p^1_{\r}$ (and not $(\p^1,\emptyset)$). This avoids certain conic bundles with no real points.
  We denote by $\aut(X,\pi)$ (respectively $\bir(X,\pi)$) the group of automorphisms (respectively birational self-maps) of the conic bundle $(X,\pi)$. Observe that $\aut(X,\pi)=\aut(X)\cap \bir(X,\pi)$. Similarly, when $(X,\pi)$ is real we denote by $\aut(X(\r),\pi)$ the group $\aut\bigl(X(\r)\bigr)\cap \bir(X,\pi)$.

  Recall that a real algebraic  surface $X$ is \emph{minimal} if and only if there is no real $(-1)$-curve and no pair of disjoint conjugate imaginary  $(-1)$-curves on $X$, and that a real conic bundle $(X,\pi)$ is minimal if and only if the two irreducible components of any real singular fibre of $\pi$ are imaginary. Compare to the complex case where $(X,\pi)$ is minimal if and only if there is no singular fibre.



\bigskip


The following two results follow from the work of Comessatti \cite{Com1},  (see also \cite{bib:Man}, \cite{bib:IskMinimal}, \cite[Chap.~V]{Sil}, or \cite{Kol}).
Recall that a surface $X$ is a del Pezzo surface if the anti-canonical divisor $-K_X$ is ample. The same definition applies for $X$ real or complex. 

  \begin{thm}\label{Thm:ClassicMinimal}
  If $X$ is a minimal geometrically rational real surface such that $X(\r)\ne \emptyset$, then one and exactly one of the following holds:
  \begin{enumerate}
  \item
  $X$ is rational: it is isomorphic to $\p^2_{\r}$, to the quadric $Q_0:=\{(x:y:z:t)\in\p^3_\r\ |\ x^2+y^2+z^2=t^2\}$, or to a real Hirzebruch surface $\mathbb{F}_n$, $n\ne 1$;
  \item
  $X$ is a del Pezzo surface of degree $1$ or $2$ with $\rho(X)=1$;
  \item
  there exists a minimal conic bundle structure $\pi\colon X\rightarrow \p^1$ with an even number of singular fibres $2r\geq 4$. Moreover, $\rho(X)=2$.
  \end{enumerate}
  \end{thm}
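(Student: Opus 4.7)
The plan is to run the equivariant minimal model program on $X$. Since $X_{\c}$ is rational, $K_{X_{\c}}$ is not pseudo-effective, so there exists a $K_X$-negative extremal ray in the $\sigma$-invariant cone of curves $\overline{NE}(X)$; let $\varphi\colon X\to Y$ be the associated $\sigma$-equivariant Mori contraction. There are three cases depending on $\dim Y$.

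If $\dim Y=2$, then $\varphi$ is the blow-down of either a real $(-1)$-curve or of a pair of disjoint conjugate imaginary $(-1)$-curves, both forbidden by the minimality of $X$. If $\dim Y=1$, then $\varphi$ is a conic bundle; since $X(\r)\ne\emptyset$ a general real fibre is a nonempty real conic, which forces $Y\cong\p^1_{\r}$ rather than $(\p^1,\emptyset)$. The $\sigma$-invariance of the contracted ray forces any real singular fibre to consist of a pair of conjugate imaginary $(-1)$-curves, so $\pi:=\varphi$ is a minimal real conic bundle. If $\dim Y=0$, then $\rho(X)=1$ and $-K_X$ is ample, so $X$ is a real del Pezzo surface.

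For the del Pezzo case I would classify by the degree $d:=K_X^2\in\{1,\ldots,9\}$. For $d=9$ one obtains $\p^2_{\r}$; for $d=8$, the equality $\rho(X)=1$ forces $X_{\c}\cong\p^1\times\p^1$ with $\sigma$ swapping the two rulings, and the nonemptiness of $X(\r)$ pins down $X\cong Q_0$ up to isomorphism. Both fall into case (1). To eliminate $d\in\{3,4,5,6,7\}$, I would use that $\sigma$ acts on $\Pic(X_{\c})\cong \z\oplus E_{9-d}$ by an isometry preserving the set of $(-1)$-curves and the class of $-K_X$, and that $\rho(X)=1$ forces $\sigma=-\mathrm{id}$ on the orthogonal complement of $K_X$. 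A direct inspection of the intersection graph of the $(-1)$-curves in each degree (three in a chain for $d=7$, a hexagon for $d=6$, the Petersen graph for $d=5$, and the Schl\"afli-type graphs on $16$ and $27$ vertices for $d=4,3$) shows that any such involution either fixes a $(-1)$-curve or produces a disjoint conjugate imaginary pair, contradicting minimality. The remaining degrees $d\in\{1,2\}$ yield case (2).

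Finally, to obtain the bound $2r\geq 4$ in case (3), I would rule out the possibility of exactly two singular fibres as follows. If there were two, then $K_X^2=6$, so $X_{\c}$ is a del Pezzo surface of degree $6$ with six $(-1)$-curves forming a hexagon; four are the fibre components, and the remaining two are a pair of opposite sides, hence disjoint, forming a conjugate imaginary pair that contradicts minimality. The case of $r=0$ singular fibres leaves $X$ a geometrically ruled surface over $\p^1_{\r}$ with no $(-1)$-curves, i.e.\ a real Hirzebruch surface $\mathbb{F}_n$ with $n\ne 1$, completing case (1). I expect the main obstacle to be the case-by-case lattice-theoretic elimination of the del Pezzo degrees $3$--$7$, which requires checking in each root system $E_{9-d}$ that no involution with $1$-dimensional fixed subspace permutes the set of $(-1)$-curves without either fixing one or producing a disjoint conjugate pair.
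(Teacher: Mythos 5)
The paper does not prove this theorem itself: it is quoted from Comessatti--Manin--Iskovskikh (see the references around Theorem~\ref{Thm:ClassicMinimal}), so you are supplying a proof where the paper gives a citation. Your MMP skeleton is the right one, and the $\rho(X)=1$ branch is sound; there you can avoid all graph inspections, since $\sigma^{*}=-\mathrm{id}$ on $K_X^{\perp}$ forces $\sigma^{*}(E)=-\tfrac{2}{d}K_X-E$ for every $(-1)$-class $E$, whence $E\cdot\sigma^{*}(E)=1+\tfrac{2}{d}$ must be a non-negative integer; this kills $d\in\{3,\dots,7\}$ in one line (for those $d$ the class $\tfrac{2}{d}K_X$ is not even integral). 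Two small repairs in the conic bundle branch: the reason the components of a real singular fibre are imaginary is the absence of real $(-1)$-curves on $X$ (not the $\sigma$-invariance of the ray), and you must also rule out singular fibres over imaginary points of $\p^1$ (a conjugate pair of such fibres yields disjoint conjugate imaginary $(-1)$-curves) and then justify the evenness of their number, which comes from the fact that these fibres are exactly the boundary points of the union of closed intervals $I(X,\pi)\subset\p^1(\r)$.

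The genuine gap is the step ``$K_X^2=6$, so $X_{\c}$ is a del Pezzo surface of degree $6$.'' This is a non sequitur: a complex conic bundle with two singular fibres is $\mathbb{F}_n$ blown up at two points on distinct fibres, and for $n\geq 2$, or for points lying on the negative section, it is not del Pezzo --- it carries an irreducible curve $C$ with $C^2\leq -2$ and $C\cdot K_X\geq 0$, e.g.\ the strict transform of the negative section. The implication does hold under your standing hypotheses, but only after an argument that uses minimality and $\rho(X)=2$: one checks that such a $C$ is necessarily horizontal with $C^2<0$, that $C\cdot\sigma(C)<0$ unless $\sigma(C)=C$ (so $C$ must be real), and that a real class with $C\cdot f=k\geq 1$ in the rank-two lattice $\Pic(X)\otimes\q=\q K_X\oplus\q f$ cannot satisfy both the adjunction relation $C^2=2g-2-C\cdot K_X$ and the relation $C^2=-\tfrac{3}{2}k^2-k\,(C\cdot K_X)$ imposed by $K_X^2=6$ with $C\cdot K_X\geq 0$. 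Only once $-K_X$ is known to be ample does your hexagon argument (the two $(-1)$-sections are opposite vertices, hence a disjoint conjugate imaginary pair) apply. As written, the exclusion of two singular fibres --- which is exactly the point where the literature you are implicitly reproving was historically misquoted --- is not established.
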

  
  \begin{rem}
 If $(S,\sigma)$ is a minimal geometrically rational real surface such that $S(\c)^\sigma= \emptyset$, then $S$ is an Hirzebruch surface of even index.
  \end{rem}

\begin{prop}[Topology of the real part] \label{Prp:TopMinimal}
In each case of the former theorem, we have:
\begin{enumerate}
\item $X$ is rational if and only if $X(\r)$ is connected. 
When $X$ is moreover minimal, then $X(\r)$ is homeomorphic to one of the following: the real projective plane, the sphere, the torus, or the Klein bottle.
\item When $X$ is a minimal del Pezzo surface of degree~$1$, it satisfies $\rho(X)=1$, and $X(\r)$ is the disjoint union of one real projective plane and $4$ spheres. 
If $X$ is a minimal del Pezzo surface of degree~$2$ with $\rho(X)=1$, then $X(\r)$ is the disjoint union of $4$ spheres.
\item If $X$ is non-rational and is endowed with a minimal conic bundle with $2r$ singular fibres, then $X(\r)$ is the disjoint union of $r$ spheres, $r \geq 2$.
\end{enumerate}
\end{prop}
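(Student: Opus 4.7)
My plan is to treat the three statements in roughly increasing order of subtlety, using Theorem~\ref{Thm:ClassicMinimal} as the organising principle and a fibrewise analysis for the conic bundle case.

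For part (1), the equivalence ``rational $\Leftrightarrow$ connected'' falls out of the birational invariance of $\#X(\r)$ noted in the introduction: since $\p^2_\r(\r)$ is connected, so is $X(\r)$ for any rational $X$; conversely, if $X$ is minimal geometrically rational but not rational, Theorem~\ref{Thm:ClassicMinimal} places it in case (2) or (3), so $\#X(\r)\geq 2$ by the very statements proved below. The topological identification in the rational case is a direct computation on the short list: $\p^2_\r(\r)$ is the real projective plane, $Q_0(\r)$ is the $2$-sphere from the equation $x^2+y^2+z^2=t^2$, and $\mathbb{F}_n(\r)\to \p^1_\r(\r)=S^1$ is an $S^1$-bundle whose total space is orientable precisely when $n$ is even (because $\mathbb{F}_n=\p(\mathcal{O}\oplus\mathcal{O}(n))$ and the real part of $\mathcal{O}(1)$ over $\p^1_\r$ is a M\"obius band); this yields the torus for $n$ even and the Klein bottle for $n$ odd.

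For part (3), the essential new case, I study the map $\pi(\r)\colon X(\r)\to \p^1_\r(\r)=S^1$. A real singular fibre of a minimal conic bundle consists of two conjugate imaginary lines meeting at a real node, so its real locus is a single point. A smooth real fibre is a plane conic, hence either a topological circle or empty according to the sign of its discriminant. The $2r$ critical values split $S^1$ into $2r$ arcs on which the type of smooth fibres is constant; because $X$ is nonsingular the discriminant vanishes to odd order at each real critical value, so its sign flips across each such value. The two possible types therefore alternate and there are exactly $r$ arcs of each kind. Each closed ``circle-fibre'' arc together with its two bounding node-fibres is the suspension of $S^1$, i.e.\ a $2$-sphere, and $X(\r)$ is the disjoint union of these $r$ spheres. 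The inequality $r\geq 2$ comes from $2r\geq 4$ in Theorem~\ref{Thm:ClassicMinimal}(3).

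Part (2), the classification of the real loci of minimal del Pezzo surfaces of degree~$1$ and of degree~$2$ with $\rho=1$, is a classical result of Comessatti~\cite{Com1}; the statement needed is precisely the content of \cite[Chap.~V]{Sil} (see also \cite{Kol}), and I would cite it rather than reproduce the explicit case analysis. The main obstacle in my proof is the alternation argument in part (3): one has to verify that the discriminant of the pencil really changes sign at each real singular value, equivalently that each such value is a simple root of the discriminant, which rests on nonsingularity of the total space together with the minimality of the conic bundle. Once this local fact is in hand, the global topological description falls out immediately.
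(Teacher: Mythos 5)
Your argument is correct, but it is worth saying up front that the paper does not prove this proposition at all: both Theorem~\ref{Thm:ClassicMinimal} and Proposition~\ref{Prp:TopMinimal} are stated as consequences of Comessatti's work, with references to \cite{bib:Man}, \cite{bib:IskMinimal}, \cite[Chap.~V]{Sil} and \cite{Kol}. So your proposal supplies a self-contained proof of parts (1) and (3) where the paper only cites, and defers to the literature exactly where the paper does (part (2), the degree~$1$ and~$2$ del Pezzo cases). Your fibrewise analysis in part (3) is the standard one, and it is essentially the same local picture that the paper itself establishes later, in Lemma~\ref{Lem:AffEx}: an exceptional minimal conic bundle contains an affine model $y^2+z^2=Q(x)$ with $Q$ having only simple real roots, so that $I(X,\pi)$ is the closure of $\{Q\geq 0\}$ and the real part is a union of $r$ spheres, one over each closed interval. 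In that sense your proof of (3) anticipates, in weaker local form, a computation the paper needs anyway.

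One point of precision in your alternation argument. The sign of the determinant of the $3\times 3$ symmetric matrix of a real conic does not by itself decide whether the real locus is empty: $\det>0$ corresponds to signature $(3,0)$ \emph{or} $(1,2)$, and $\det<0$ to $(0,3)$ or $(2,1)$, so both signs are compatible with both an empty conic and a circle. What actually makes the argument work is the local normal form at a real node of a minimal conic bundle: the fibre there is a pair of conjugate imaginary lines, so in local analytic coordinates centred at the node the surface is $y^2+z^2=Q(x)$ with $Q$ vanishing to order one (by smoothness of the total space), and it is the sign of \emph{this} $Q$ that flips and governs emptiness versus a circle. You should also note, when asserting that the fibres on the ``empty side'' are genuinely empty (and not circles living away from the node), that this follows from properness: a sequence of real points in nearby fibres would accumulate on a real point of the singular fibre, which has only the node. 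With these two clarifications the proof of (3) is complete, and the rest of your write-up is fine.
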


\begin{prop}\label{Prop:DP1DP2rho1}
Let $X,Y$ be two minimal geometrically rational real surfaces. Assume that $X$ is not rational and satisfies $\rho(X)=1$ $($but $\rho(Y)$ may be equal to $1$ or $2)$.

\begin{enumerate}
\item
If $X$ is a del Pezzo surface of degree~$1$, then any birational map $X\dasharrow Y$ is an isomorphism. 
In particular, 
$$
\aut(X)=\aut\bigl(X(\r)\bigr)=\bir(X)\;.
$$

\item
If $X$ is a del Pezzo surface of degree~$2$, $X$ is birational to $Y$ if and only $X$ is isomorphic to $Y$. 
Moreover, all the base-points of the elements of $\bir(X)$ are real, and
$$
\aut(X)=\aut\bigl(X(\r)\bigr)\subsetneq\bir(X)\;.
$$

\end{enumerate}
\end{prop}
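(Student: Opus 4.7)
The plan is to deduce both parts from Iskovskikh's classification of elementary Sarkisov links between real minimal geometrically rational surfaces. Since $\rho(X)=1$, the surface $X$ is itself a Mori fibre space (over a point), so by the Sarkisov program any birational map $\phi\colon X\dasharrow Y$ with $Y$ minimal factors as a composition of elementary links, each centred at a well-understood real-algebraic subscheme (either a real point or a pair of conjugate imaginary points on the intermediate surface).

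For part (1), I would invoke Iskovskikh's list to observe that no elementary link can be performed on a real del Pezzo surface of degree $1$ with $\rho(X)=1$: the numerical conditions governing the existence of a link, phrased in terms of $K_X^2$ and the multiplicity of the centre, have no solution when $K_X^2=1$ and $\rho(X)=1$. Consequently every $\phi\colon X\dasharrow Y$ is a composition of zero links, i.e.\ an isomorphism; in particular $\bir(X)=\aut(X)$, and the chain of inclusions $\aut(X)\subset\aut\bigl(X(\r)\bigr)\subset\bir(X)$ then forces all three groups to coincide.

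For part (2), the same classification, applied to $K_X^2=2$ and $\rho(X)=1$, shows that the only elementary links from $X$ are centred at real points of $X$, and that their target is again a surface isomorphic to $X$ (Geiser-type links). Factoring an arbitrary birational map $\phi\colon X\dasharrow Y$ into such links gives $Y\cong X$, proving the first assertion of (2). Applying the same factorisation to a self-map shows that every base point of any element of $\bir(X)$ is real. To exhibit a genuine non-biregular birational self-map and conclude $\aut(X)\subsetneq\bir(X)$, I would write down one of Iskovskikh's links explicitly: blow up a suitable real point of $X$ and contract a resulting $(-1)$-curve on the blowup, which yields a non-trivial element of $\bir(X)\setminus\aut(X)$. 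Finally, any $\psi\in\aut\bigl(X(\r)\bigr)$ is a birational self-map regular along $X(\r)$ with regular inverse there, so its base points, if any, lie outside $X(\r)$ and are thus imaginary; since we have just shown that every base point of an element of $\bir(X)$ is real, $\psi$ can have none, and similarly for $\psi^{-1}$. Hence $\psi\in\aut(X)$, which gives the equality $\aut(X)=\aut\bigl(X(\r)\bigr)$.

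The main obstacle in both parts is the rigorous invocation of Iskovskikh's list of elementary links for real minimal Mori fibre spaces: for degree $1$ it yields an empty list (giving the super-rigidity used in (1)), while for degree $2$ it yields exactly the real-centred Geiser-type links (giving the more delicate statement in (2) that base points are real while genuine birational self-maps nevertheless exist). Checking these two outputs amounts to the classical, but technical, case analysis that the discrepancy $K_X^2=1,2$ with $\rho(X)=1$ leaves no room for non-real centres.
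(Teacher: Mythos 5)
Your proposal follows essentially the same route as the paper: both rest on Iskovskikh's factorisation of a birational map between minimal surfaces into elementary links, together with his classification showing that when $\rho(X)=1$ and $X$ is non-rational the only available links are those on degree~$2$ del Pezzo surfaces centred at real points (and none at all in degree~$1$), after which the equalities $\aut(X)=\aut\bigl(X(\r)\bigr)$ follow exactly as you argue from the reality of all base points. One small correction: the involution underlying the degree~$2$ link is the \emph{Bertini} involution of the degree~$1$ del Pezzo surface obtained by blowing up a real point, not a ``Geiser-type'' link --- the Geiser involution of $X$ itself is biregular and yields no element of $\bir(X)\setminus\aut(X)$.
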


\begin{proof}
Assume the existence of a birational map $\psi\colon X\dasharrow Y$. If $\psi$ is not an isomorphism, we decompose $\psi$ into elementary links \[X=X_{0}\stackrel{\psi_{1}}{\dasharrow}X_{1}\stackrel{\psi_{2}}{\dasharrow}\dots \stackrel{\psi_{n-1}}{\dasharrow}X_{n-1}\stackrel{\psi_{n}}{\dasharrow}X_{n}=Y\]
as in \cite[Theorem 2.5]{IskFact}. It follows from the description of the links of \cite[Theorem 2.6]{IskFact} that for any  link $\psi_{i}\colon X_{i-1}\dasharrow X_{i}$, $X_{i-1}$ and $X_{i}$ are isomorphic del Pezzo surfaces of degree $2$, and that $\psi_{i}$ is equal to $\beta\eta\alpha\eta^{-1}$, where $\eta$ is the blow-up $X'\to X_{i-1}$ of a real point of $X_{i-1}$, $X'$ is a del Pezzo surface of degree $1$,  $\alpha\in \aut(X')$ is the Bertini involution of the surface, and $\beta\colon X_{i+1}\to X_{i}$ is an isomorphism.

Therefore, $Y$ is isomorphic to $X$. Moreover, if $X$ has degree~$1$, $\psi$ is an isomorphism. If $X$ has degree $2$, $\psi$ is decomposed into conjugates of Bertini involutions, so each of its base-points is real. This proves that if $\psi\in \aut\bigl(X(\r)\bigr)$ then $\psi\in \aut(X)$. 
Furthermore, conjugates of Bertini involutions belong to $\bir(X)$ but not to $\aut(X)=\aut\bigl(X(\r)\bigr)$.
\end{proof}
\begin{cor}\label{Cor:DPcountable}
Let $X_{0}$ be a minimal non-rational geometrically rational real surface with $\rho(X_{0})=1$, and let $\eta\colon X\to X_{0}$ be a birational morphism.

Then, $\aut\bigl(X(\r)\bigr)$ is countable. 
Moreover, if $X_{0}$ is a del Pezzo surface of degree~$1$, then $\aut\bigl(X(\r)\bigr)$ is finite.
\end{cor}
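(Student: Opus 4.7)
The strategy is to transfer the question to the minimal model $X_0$, whose birational and biregular structure is tightly controlled by Proposition~\ref{Prop:DP1DP2rho1}. By Theorem~\ref{Thm:ClassicMinimal} and the hypotheses, $X_0$ is a del Pezzo surface over $\r$ of degree $1$ or $2$ with $\rho(X_0) = 1$. The birational morphism $\eta$ provides an injective group homomorphism
\[
\Phi \colon \aut\bigl(X(\r)\bigr) \longrightarrow \bir(X_0),\qquad \psi \longmapsto \eta \circ \psi \circ \eta^{-1},
\]
where injectivity follows from the fact that $\eta\psi\eta^{-1}=\mathrm{id}$ as a birational self-map of $X_0$ forces $\psi$ to be the identity.

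When $X_0$ has degree $1$, Proposition~\ref{Prop:DP1DP2rho1}(1) identifies $\bir(X_0)$ with $\aut(X_0)$, which is classically finite (for instance, the bianticanonical map realises $X_0$ as a double cover of a quadric cone, whose automorphism group is finite). Composing with $\Phi$ then yields the stronger conclusion that $\aut\bigl(X(\r)\bigr)$ is finite.

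When $X_0$ has degree $2$, the group $\bir(X_0)$ is itself uncountable — it contains a Bertini involution based at each real point of $X_0$ — so $\Phi$ alone is insufficient. My plan is instead to exploit the linear action of $\aut\bigl(X(\r)\bigr)$ on the Picard lattice. Pullback of divisor classes yields a homomorphism
\[
\theta \colon \aut\bigl(X(\r)\bigr) \longrightarrow \GL\bigl(\Pic(X)\bigr),
\]
and since $\Pic(X)$ is finitely generated free abelian, the target identifies with $\GL_n(\z)$ for some $n$, which is countable. For the kernel, the standard fact that a birational self-map of a smooth projective surface acting trivially on $\Pic$ contracts no divisor — hence is biregular — gives $\ker(\theta) \subset \aut(X)$. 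Moreover, $X_0$ being a del Pezzo surface of degree $\leq 5$ has $h^0(T_{X_0})=0$, a vanishing inherited by $X$ through blowing up, whence $\aut^0(X)=0$. Lieberman's theorem then shows that the subgroup of $\aut(X)$ acting trivially on $\Pic(X)$ is finite, so $\ker(\theta)$ is finite and $\aut\bigl(X(\r)\bigr)$ is countable.

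The main technical point to verify is the inclusion $\ker(\theta) \subset \aut(X)$: an element of $\aut\bigl(X(\r)\bigr)$ is a priori only regular on the real locus and could in principle admit imaginary base-points, so one must check that triviality on the Picard group forces global biregularity. This rests on the standard surface-theoretic dichotomy between pseudo-automorphisms and automorphisms, but requires attention in the real-algebraic setting.
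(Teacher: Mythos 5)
Your degree-1 argument is correct and coincides with the paper's (conjugation by $\eta$ embeds $\aut\bigl(X(\r)\bigr)$ into $\bir(X_0)=\aut(X_0)$, which is finite). The gap is in the degree-2 case: the map $\theta$ you propose is not a group homomorphism on $\aut\bigl(X(\r)\bigr)$. An element of $\aut\bigl(X(\r)\bigr)$ is a priori only a birational self-map of $X$, regular along $X(\r)$, and for birational maps with base-points the total transform on $\Pic(X)$ is not compatible with composition: $(\psi\circ\varphi)^{*}\neq\varphi^{*}\circ\psi^{*}$ in general. Hence the image of $\theta$ need not be a subgroup of $\GL(n,\z)$, its ``kernel'' need not be a subgroup, and the fibres of $\theta$ need not be cosets of the kernel, so ``countable image plus finite kernel'' does not yield countability of $\aut\bigl(X(\r)\bigr)$. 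Your closing remark correctly identifies the danger (imaginary base-points), but resolving it only for the kernel does not repair the argument: one needs \emph{every} element of $\aut\bigl(X(\r)\bigr)$ to be biregular on all of $X$ before the Picard representation makes sense. This cannot be had for free, since in general $\aut\bigl(Y(\r)\bigr)\neq\aut(Y)$ (twisting maps on conic bundles are regular on the real locus but have imaginary base-points), so a special feature of the present situation must be invoked.

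That feature is exactly the one the paper uses and you do not: by Proposition~\ref{Prop:DP1DP2rho1}, every element of $\bir(X_0)$ is a composition of conjugates of Bertini involutions centred at real points, so all its base-points are real. After replacing $\eta$ by the blow-up of only the real points of its centre -- which does not change the isomorphism class of $X(\r)$ -- every element of $\bir(X)$ also has only real base-points; since an element of $\aut\bigl(X(\r)\bigr)$ and its inverse are regular at all real points, it has no base-points at all, whence $\aut\bigl(X(\r)\bigr)=\aut(X)$. Only then is the action on $\Pic(X)$ an honest homomorphism, and the paper shows it is in fact injective: an element of the kernel descends to an automorphism of $X_0$ acting trivially on $\Pic(X_0)$, i.e.\ to the lift of an automorphism of $\p^2_{\c}$ fixing $7$ or $8$ points in general position, hence the identity. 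So neither Lieberman's theorem nor the vanishing $h^0(T_X)=0$ is needed once the real-base-point input is in place. (A minor slip: the finiteness of $\aut(X_0)$ in degree $1$ is not because the quadric cone has finite automorphism group -- it does not -- but because automorphisms must preserve the branch curve of the double cover.)
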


\begin{proof}
Without changing the isomorphism class of $X(\r)$ we may assume that $\eta$ is the blow-up of only real points (which may belong to $X_{0}$ as proper or infinitely near points). Since any base-point of any element of $\bir(X_{0})$ is real (Proposition~\ref{Prop:DP1DP2rho1}), the same is true for any element of $\bir(X)$. In particular, $\aut\bigl(X(\r)\bigr)=\aut(X)$. 
The group $\aut(X)$ acts on $\Pic(X)\cong \mathbb{Z}^{n}$, where $n=\rho(X)\geq 1$. 
This action gives rise to an homomorphism $\theta\colon \aut(X)\to \GL(n,\z)$. 
Let us prove that $\theta$ is injective. Indeed, if $\alpha\in \Ker(\theta)$, then $\alpha$ is conjugate by $\eta$ to an element  of $\alpha_{0}\in \aut(X_{0})$ which acts trivially on $\Pic(X_{0})$. 
Writing $S_{0}$ the complex surface obtaining by forgetting the real structure of $X_{0}$, $S_{0}$ is the blow-up of $7$ or $8$ points in general position of $\p^2_{\c}$. 
Thus $\alpha_{0}\in \aut(X_{0})\subset \aut(S_{0})$ is the lift of an automorphism of $\p^2_{\c}$ which fixes $7$ or $8$ points, no $3$ collinear, hence is the identity.

The morphism $\theta$ is injective, and this shows that $\aut\bigl(X(\r)\bigr)=\aut(X)$ is countable. Moreover, if $X_{0}$ is a del Pezzo surface of degree $1$, then $\bir(X_{0})=\aut(X_{0})$ (by Proposition~\ref{Prop:DP1DP2rho1}). Since $\aut(X_{0})$ is finite, $\aut\bigl(X(\r)\bigr)\subset \bir(X)$ is also finite.
\end{proof}
  
\section{Minimal and exceptional conic bundles}\label{Sec:conicbundle}

\begin{dfn}
If  $(X,\pi)$ is a real conic bundle, $I(X,\pi)\subset \p^1(\r)$ denotes the image by $\pi$ of the set $X(\r)$ of real points of $X$. 
\end{dfn} 
The set $I(X,\pi)$ is the union of a finite number of intervals (which may be $\emptyset$ or $\p^1(\r)$), and it is well-known that it determines the birational class of $(X,\pi)$. 
We prove that $I(X,\pi)$ also determines the equivalence class of $(X(\r),\pi)$ among the minimal conic bundles, and give the proof of Theorem~\ref{Thm:Diff+Bir=DiffBir} in the case of conic bundles (Corollary~\ref{Cor:Thm5Conic}). 

\bigskip

\begin{lemdef}\label{LemDef:Exc}
Let $(X,\pi)$ be a real minimal conic bundle. The following conditions are equivalent:

\begin{enumerate}
\item\label{LemDef:Exc1} There exists a section $s$ such that $s$ and $\bar{s}$ do not intersect.

\item\label{LemDef:Exc2} There exists a section $s$ such that $s^2=-r$, where $2r$ is the number of singular fibres.
\end{enumerate}

\noindent If any of these conditions occur, we say that $(X,\pi)$ is \emph{exceptional}.
\end{lemdef}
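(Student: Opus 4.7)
My plan is to reduce both conditions to a single numerical identity: for every section $s$ of $\pi$,
\[
  s\cdot\bar s \;=\; s^2 + r.
\]
The identity immediately implies the equivalence. On one hand, condition~\ref{LemDef:Exc1} asserts the existence of a section with $s\neq\bar s$ and $s\cdot\bar s=0$, which becomes exactly $s^2=-r$, i.e.\ condition~\ref{LemDef:Exc2}. On the other hand, any section satisfying $s^2=-r$ has $s\cdot\bar s=0$, hence is imaginary and disjoint from its conjugate. As a free by-product, feeding $s=\bar s$ into the identity would force $r=0$, contradicting $r\geq 2$, so no minimal conic bundle with $r\geq 2$ carries a real section, and the sections witnessing either condition are automatically imaginary.

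To derive the identity I would work inside $\Pic(X)\otimes\mathbb{Q}$. Theorem~\ref{Thm:ClassicMinimal} gives $\rho(X)=2$, and the classes $f$ and $K_X$ are $\mathbb{Q}$-linearly independent since $f^2=0$ while $K_X\cdot f=-2\neq 0$; hence $\{f,K_X\}$ is a $\mathbb{Q}$-basis of $\Pic(X)\otimes\mathbb{Q}$. Writing the Galois-invariant class $s+\bar s=\alpha f+\beta K_X$ and computing intersections with $f$ and with $K_X$---using adjunction $K_X\cdot s=-2-s^2$ on the rational curve $s$ together with $K_X^2=8-2r$---yields $\beta=-1$ and $\alpha=s^2+r-2$. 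Expanding $(s+\bar s)^2$ in this basis and comparing with the expansion $(s+\bar s)^2=2s^2+2s\cdot\bar s$ gives the identity after a line of algebra.

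I do not anticipate a serious obstacle: the argument is purely intersection-theoretic. The only care required is bookkeeping---remembering the identification $\Pic(X)\cong\Pic(S)^\sigma$, the fact that conjugation preserves intersection numbers (so that $\bar s^2=s^2$ and $\bar s\cdot K_X=s\cdot K_X$), and that I am working with a $\mathbb{Q}$-basis of a rank-two real Picard group rather than an integral basis. Once $\{f,K_X\}$ is in place, everything reduces to a short formal manipulation.
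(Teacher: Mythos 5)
Your key identity $s\cdot\bar s=s^2+r$ is correct, and deriving it by writing the invariant class $s+\bar s$ in the $\mathbb{Q}$-basis $\{f,K_X\}$ is a clean, purely lattice-theoretic alternative to the paper's argument (which instead contracts one component in each singular fibre to land in a Hirzebruch surface $\mathbb{F}_m$ and uses the classification of disjoint sections there). For $r\geq 1$ your identity does settle both implications at once, and it even yields the non-existence of real sections as you note. However, two of your justifications import hypotheses that the statement does not have, and this leaves a genuine gap. First, Theorem~\ref{Thm:ClassicMinimal} concerns surfaces that are minimal \emph{as surfaces} (and, in its conic bundle case, non-rational with $2r\geq 4$); the lemma only assumes that the \emph{conic bundle} is minimal, and such an $X$ need not be a minimal surface (e.g.\ $\mathbb{F}_1$, or a minimal conic bundle with two singular fibres on a rational surface). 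The fact you need, $\rho(X)=2$, is still true -- it is the standard characterization of relative minimality, or can be checked directly from the Galois action exchanging the two components of each singular fibre -- but it must be argued this way, not cited from the classification of minimal surfaces.

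Second, and more seriously, your disposal of the case $s=\bar s$ via ``contradicting $r\geq 2$'' is unfounded: $r\geq2$ is nowhere assumed, and the lemma must cover $r=0$ (the paper uses it for exceptional conic bundles without singular fibres, e.g.\ real forms of $(\p^1_\c\times\p^1_\c,\pr_1)$). When $r=0$, a section with $s^2=-r=0$ may perfectly well be real -- on $\p^1_\r\times\p^1_\r$ the horizontal sections through real points are such -- and then it does \emph{not} witness condition~(\ref{LemDef:Exc1}), since it meets (indeed equals) its conjugate. So your implication (\ref{LemDef:Exc2})$\Rightarrow$(\ref{LemDef:Exc1}) is incomplete precisely there: one must produce a \emph{different}, imaginary section disjoint from its conjugate. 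This is exactly the extra step in the paper's proof: if $\eta(s)=\eta(\bar s)$ one gets $r=0$, $X$ is a real form of $\p^1_\c\times\p^1_\c$, and one then chooses an imaginary section of self-intersection $0$. The fix is easy (your identity shows $r=0$ and $s^2=0$, forcing $(X,\pi)$ to be such a real form, where the horizontal sections through imaginary points do the job), but as written the case is missing, and the appeal to $r\geq2$ signals a conflation of ``minimal conic bundle'' with ``minimal non-rational surface carrying a conic bundle''.
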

\begin{proof}
Let $s$ be a section satisfying one of the two conditions. Denote by $(S,\pi)$ the complex conic bundle obtained by forgetting the real structure of $(X,\pi)$, and by $\eta:X\to \mathbb{F}_m$ the birational map which contracts in any singular fibre of $\pi$ the irreducible component which does not intersect $s$. If $s$ satisfies condition $\ref{LemDef:Exc1})$, $\eta(\bar{s})$ and $\eta(s)$ are two sections of $\mathbb{F}_m$ which do not intersect, so they have self-intersections $-m$ and $m$. This means that $s^2=\bar{s}^2=-m$ and that the number of singular fibres is $2m$, and implies $\ref{LemDef:Exc2})$. Conversely, if $s$ satisfies $\ref{LemDef:Exc2})$, $\eta(s)$ and $\eta(\bar{s})$ are sections of $\mathbb{F}_m$ of self-intersection $-r$ and $r$. If these two sections are distinct, they do not intersect, which means that $s$ and $\bar{s}$ do not intersect. If $\eta(s)=\eta(\bar{s})$, we have $r=0$, and $X=(\p^1_\c\times\p^1_\c,\sigma)$ for a certain anti-holomorphic involution $\sigma$. We may thus choose another section $s'$ of self-intersection $0$ which is imaginary. 
\end{proof}
\begin{rem}
The definition of exceptional conic bundles was introduced  in \cite{DoIs} and \cite{BlaTG} for \emph{complex} conic bundles endowed with an \emph{holomorphic} involution. If $(S,\pi)$ is an exceptional complex conic bundle with at least $4$ singular fibres, $\aut(S,\pi)=\aut(S)$ is a maximal algebraic subgroup of $\bir(S)$ \cite{BlaTG}.
\end{rem}

\begin{lem}\label{Lem:IsoEx}
Let $(Y,\pi_Y)$ be a minimal real conic bundle such that $\pi_Y$ has at least one singular fibre. There exists an \emph{exceptional} real conic bundle $(X,\pi_X)$ and an isomorphism $\psi\colon Y(\r)\to X(\r)$ such that $\pi_X\circ\psi=\pi_Y$.
\end{lem}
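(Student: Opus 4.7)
Plan. The plan is to reduce $(Y,\pi_Y)$ to an exceptional conic bundle by a finite sequence of real elementary transformations of conic bundles, each restricting to an isomorphism on real loci; the resulting bundle is $(X,\pi_X)$ and the composition is $\psi$. I identify two kinds of such transformations. \emph{(Type A)} at a real point $p\in Y(\r)$ lying on a real smooth fibre $F$ with $F(\r)\neq\emptyset$: the composition of the real blow-up of $p$ with the contraction of the strict transform of $F$, which a direct local calculation shows to be a diffeomorphism on real loci near $p$. \emph{(Type B)} the simultaneous transformation at a pair of conjugate imaginary points lying on a pair of conjugate imaginary fibres (over a base point of $\p^1(\c)\setminus\p^1(\r)$), which involves no real points whatsoever and is therefore trivially an isomorphism on real loci. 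Any finite composition of these yields a birational map $Y\dasharrow X$ of conic bundles restricting to an isomorphism $Y(\r)\to X(\r)$.

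By Lemma and Definition~\ref{LemDef:Exc}, it is enough to achieve, after the reduction, that the underlying complex conic bundle admits a section $s$ with $s\cdot\bar s=0$. Start with any section $s$ of $Y_\c\to\p^1_\c$ (sections exist for every complex conic bundle over $\p^1_\c$) and let $n=s\cdot\bar s\geq 0$. If a point $p\in s\cap\bar s$ is real and lies on a smooth fibre, a Type A transformation at $p$ yields new sections meeting their conjugates in $n-1$ points, since the blow-up separates $s$ and $\bar s$ at $p$ while the contraction of the $(-1)$-curve $\tilde F$ is disjoint from their proper transforms. If $p$ is imaginary with $\pi(p)\notin\p^1(\r)$, a single Type B transformation at the conjugate pair $\{p,\bar p\}$ decreases $n$ by $2$. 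Iterating drives $n$ to zero, at which point the bundle is exceptional.

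The main obstacle is dealing with the ``bad'' points in $s\cap\bar s$ which fall into neither of the above forms: namely real points lying on a singular fibre (where Type A would blow up the singular point of a reducible fibre, not a smooth point of a smooth fibre), or conjugate imaginary pairs $\{p,\bar p\}$ with $\pi(p)$ real (so $p$ and $\bar p$ lie on the same real fibre, and the two elementary transformations constituting Type B would interfere with each other). I would handle these by applying preparatory Type B transformations at generic conjugate pairs $\{q,\bar q\}$ with $q\in s$ and $\pi(q)\notin\p^1(\r)$: such moves leave $n$ unchanged, but deform the section $s$ sufficiently that after finitely many preparations every remaining intersection point of $s$ with its conjugate is of the good Type A or Type B form. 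To verify that the final exceptional bundle has the expected invariants, I would compare it with the explicit model $\{x^2+y^2=p(t)z^2\}\subset\p^2\times\mathbb{A}^1$, where $p\in\r[t]$ is chosen with simple roots at the real base points of the singular fibres of $\pi_Y$ and positive exactly on $I(Y,\pi_Y)$: this model is exceptional (the conjugate sections $(\im:1:0)$ and $(-\im:1:0)$ are disjoint) and satisfies $I(X,\pi_X)=I(Y,\pi_Y)$, matching the reduced conic bundle.
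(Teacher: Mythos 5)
Your reduction strategy fails at its foundation: the ``Type A'' elementary transformation centred at a real point $p$ is \emph{not} an isomorphism on real loci, and no local calculation can make it one. The birational map (blow up $p$, contract the strict transform of the fibre $F$ through $p$) is undefined at the real point $p$, and its inverse is undefined at the real point onto which $F$ is contracted; in local coordinates it is $(x,y)\mapsto (x,y/x)$, which has no continuous extension at the origin, and globally it can even change the topology of the real locus (the elementary transformation $\mathbb{F}_0\dasharrow\mathbb{F}_1$ at a real point turns a torus into a Klein bottle). Moreover, real intersection points of $s$ with $\bar s$ do occur in general and cannot be removed by your preparatory Type B moves, since those only modify fibres over imaginary base points and leave everything above $\p^1(\r)$ --- in particular the real points of $s\cap\bar s$ --- untouched. (Incidentally, the ``bad'' configurations you worry about are essentially vacuous: a real point of $s\cap\bar s$ automatically lies on a smooth fibre, because the only real point of a singular fibre of a minimal bundle is its node, through which no section passes; and two conjugate imaginary points of $s\cap\bar s$ cannot lie on the same real fibre because the section $s$ meets each fibre exactly once. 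The genuine difficulty is the real intersection points themselves.)

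That difficulty is precisely what the paper's proof is organised around. It carries out your reduction of $s\cdot\bar s$ anyway, accepting real base points, and obtains a fibrewise birational map $\phi\colon Y\dasharrow Z$ onto an exceptional bundle; by induction it reduces to the case where $\phi$ is a single elementary transformation whose inverse has one real base point $q\in Z$; then, using the explicit affine model $y^2+z^2=Q(x)$ of the exceptional bundle $Z$ given by Lemma~\ref{Lem:AffEx}, it constructs a \emph{new} section, locally $y+\im z=\im x^n$ with $n\gg 0$, whose only real intersection with its conjugate is at $q$. The strict pull-back of this section by $\phi$ is a section of $Y$ meeting its conjugate only at imaginary points, so that starting from $Y$ one reaches an exceptional bundle using only elementary transformations at imaginary points --- your Type B, the only moves that genuinely preserve the real locus. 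Your proposal contains no substitute for this key step, so the gap is essential.
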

\begin{rem}
The result is false without the assumption on the number of singular fibres. Consider for example $Y=\mathbb{F}_3(\r)$, whose real part is homeomorphic to the Klein bottle. Indeed, any exceptional conic bundle with no singular fibres is a real form of $(\p^1_\c\times\p^1_\c,\pr_1)$, and thus has a real part either empty or homeomorphic to the torus $S^1\times S^1$. 
\end{rem}
Before proving Lemma~\ref{Lem:IsoEx}, we associate  to any given exceptional conic bundle $X$ an explicit circle bundle isomorphic to it. The following improves \cite[Cor.VI.3.1]{Sil} where the model is only assumed birational to $X$.

\begin{lem}\label{Lem:AffEx}
Let $(X,\pi)$ be an exceptional real conic bundle. Then, there exists an affine real variety $A\subset X$ isomorphic to the affine surface of $\r^3$ given by
\[y^2+z^2=Q(x),\]
where $Q$ is a real polynomial with only simple roots, all real. Moreover, $\pi|_A\colon A\to \p^1_\r$ is the projection $(x,y,z)\mapsto (x:1)$, and $I(X,\pi)$ is the closure of $\{(x:1)\in\p^1_\r\ |\ Q(x)\geq 0\}$. 

Furthermore, if $f=\pi^{-1}((1:0))\subset X$ is a nonsingular fibre, the singular fibres of $\pi$ are those of the points $\{(x:1)\ |\ Q(x)=0\}$ and the inclusion $A\to X$ is an isomorphism $A(\r)\to \left(X\backslash f\right)(\r)$.
In particular, if $(1:0)\notin I(X,\pi)$, the inclusion yields an isomorphism $A(\r)\to X(\r)$.
\end{lem}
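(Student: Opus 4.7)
The plan is to exhibit a concrete ``standard'' exceptional conic bundle $(X_0,\pi_0)$ defined by a global equation of the form $Y^2+Z^2=Q_{\mathrm{hom}}(x,w)W^2$ inside a $\p^2$-bundle over $\p^1_{\r}$, to prove that $(X,\pi)$ is isomorphic to such $(X_0,\pi_0)$ as a real conic bundle for a suitable choice of $Q$, and then to recognise the affine surface $A\subset X$ as the pull-back of the affine chart $w=W=1$ of $X_0$.

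First I observe that minimality forces every singular fibre of $\pi$ to sit over a real point of $\p^1_{\r}$: an imaginary singular fibre together with its complex conjugate would provide a pair of disjoint conjugate imaginary $(-1)$-curves, contradicting minimality. Denote by $p_1,\dots,p_{2r}\in\p^1(\r)$ these real points. After changing coordinates on the base I may assume that $(1:0)$ is not among them, and I choose $Q\in\r[x]$ of degree $2r$ with simple roots at $p_1,\dots,p_{2r}$ and positive exactly on the interiors of the arcs that belong to $I(X,\pi)$. Inside the $\p^2$-bundle $\mathbb{V}=\p\bigl(\mathcal{O}_{\p^1_{\r}}(r)\oplus\mathcal{O}_{\p^1_{\r}}(r)\oplus\mathcal{O}_{\p^1_{\r}}\bigr)$, with fibre coordinates $(Y:Z:W)$, I define $X_0\subset\mathbb{V}$ by the equation $Y^2+Z^2=Q_{\mathrm{hom}}(x,w)W^2$. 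A fibrewise check shows that $X_0$ is smooth (because $Q$ has simple roots); that $\pi_0:X_0\to\p^1_{\r}$ is a real conic bundle whose only singular fibres are the pairs of conjugate imaginary lines $Y=\pm\im Z$ meeting at $(0:0:1)$, one over each root of $Q$; and that the constant sections $s_0,\bar s_0$ given by $(Y:Z:W)=(\pm\im:1:0)$ are disjoint conjugate imaginary sections. Thus $(X_0,\pi_0)$ is an exceptional minimal real conic bundle with the same singular-fibre locus and the same set $I(X_0,\pi_0)$ as $(X,\pi)$, and the chart $w=W=1$ exhibits $X_0$ as the affine surface $A_0=\{y^2+z^2=Q(x)\}\subset\r^3$, with $\pi_0|_{A_0}$ the first projection.

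Next I match $(X,\pi)$ with $(X_0,\pi_0)$. Using the description of Lemma--Definition~\ref{LemDef:Exc}, contracting in every singular fibre of $X$ (respectively $X_0$) the component disjoint from $s$ (respectively $s_0$) gives complex birational morphisms $\eta:X\to\mathbb{F}_r$ and $\eta_0:X_0\to\mathbb{F}_r$ presenting both $X$ and $X_0$ as the blow-up of $\mathbb{F}_r$ at $2r$ points, one on each fibre over $p_1,\dots,p_{2r}$, all lying on the image of $\bar s$ (respectively $\bar s_0$), a section of self-intersection $r$. After composing with a complex automorphism of $\mathbb{F}_r$ that fixes the fibration and the directrix and carries one of these $(+r)$-sections to the other, the blown-up loci coincide, so $\eta_0^{-1}\circ\eta$ lifts to a complex isomorphism $\varphi:X\to X_0$ commuting with the conic bundle projections and sending $s,\bar s$ to $s_0,\bar s_0$. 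The antiholomorphic involutions on $X$ and $X_0$ are each determined by covering the standard conjugation on $\p^1_{\r}$, by swapping the two sections of self-intersection $-r$, and by the prescription of the fibres carrying real points; the first two conditions hold by construction and the third is guaranteed by the choice of sign of $Q$, so $\varphi$ is equivariant and descends to a real isomorphism of conic bundles.

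Finally I transport $A_0$ through $\varphi$ to obtain the required $A\subset X$. Since $\{W=0\}\cap X_0=s_0\cup\bar s_0$, one has $A_0=X_0\setminus(s_0\cup\bar s_0\cup f_\infty)$ with $f_\infty=\pi_0^{-1}((1:0))$, so $A$ is affine, isomorphic to $\{y^2+z^2=Q(x)\}$, and $\pi|_A$ is the projection to $(x:1)$; the identification of $I(X,\pi)$ with the closure of $\{Q(x)\geq 0\}$ and the localisation of the singular fibres of $\pi$ at $\{Q(x)=0\}$ are immediate from the fibrewise equation. Because the imaginary sections $s,\bar s$ carry no real points, the inclusion $A(\r)\hookrightarrow(X\setminus f)(\r)$ is a bijection and an isomorphism of real algebraic germs; if moreover $(1:0)\notin I(X,\pi)$, the fibre $f$ has empty real locus, so the inclusion extends to an isomorphism $A(\r)\to X(\r)$. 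The main obstacle I expect is the real identification $\varphi$ in the previous step: the contractions $\eta,\eta_0$ are defined via the non-real sections $s,s_0$ and so are only complex maps, which means one must check carefully that the complex isomorphism they yield intertwines the two antiholomorphic involutions, and that is precisely where the prescribed sign of $Q$ is needed.
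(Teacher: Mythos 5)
Your overall strategy (contract the exceptional bundle to a minimal ruled surface, compare with an explicit model $y^2+z^2=Q(x)$, and transport the affine chart back) is close in spirit to the paper's, but there is a genuine gap at the step you yourself flag as "the main obstacle": the real identification $\varphi$. You fix $Q$ \emph{a priori} (simple roots at the images of the singular fibres, positive exactly on the arcs of $I(X,\pi)$) and then assert that the anti-holomorphic involutions on $X$ and $X_0$ are "determined" by covering the standard conjugation, swapping the two $(-r)$-sections, and having the prescribed real fibres, so that $\varphi$ is automatically equivariant. As stated this is false: for instance $y\mapsto Q(x)/\bar y$ and $y\mapsto \lambda Q(x)/\bar y$ for $\lambda>0$ are distinct involutions with all three properties. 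What is true, and what your argument actually needs, is a rigidity statement up to conjugation: the composition $\sigma_{X_0}\circ\bigl(\varphi\sigma_X\varphi^{-1}\bigr)$ is a holomorphic automorphism of the conic bundle acting trivially on the base and preserving each of $s_0,\bar s_0$; to conclude it is a \emph{constant} fibrewise scaling $\mu$ you must use that both involutions exchange the two components of every singular fibre (this is where real minimality enters), then the involution condition forces $\mu\in\r^*$, a further conjugation by a scaling normalises $|\mu|$ to $1$, and only then does the comparison of real loci (possible since $I(X,\pi)$ is a proper nonempty subset of $\p^1(\r)$ when $r\geq 1$; the case $r=0$ needs separate treatment) pin down the sign and give $\mu=1$. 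None of this is carried out in your write-up, and it is precisely the content of the lemma rather than a routine verification.

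By contrast, the paper sidesteps the rigidity question entirely: it contracts to $\p^1_\c\times\p^1_\c$, writes the conjugated real structure explicitly as a twist of a reference anti-holomorphic map by a constant $\mu\in\c^*$, and then \emph{defines} $Q(x)=-\mu P_1(x,1)P_2(x,1)$ a posteriori, so the model is built to match the given real structure and only an explicit birational map $\theta$ and a fibrewise check remain. If you want to keep your "fix $Q$ first" formulation, you must supply the normalisation-of-$\mu$ argument sketched above (and handle $r=0$, where the directrix of $\mathbb{F}_0$ is not unique and $I$ is either empty or all of $\p^1(\r)$); otherwise your proof is circular in flavour, since the uniqueness of the real structure with given $I(X,\pi)$ is essentially Corollary~\ref{Prp:Equivalences}, which the paper deduces \emph{from} this lemma.
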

\begin{proof}Denote by $2r$ the number of singular fibres of $\pi$ (which is even, see Lemma~\ref{LemDef:Exc}). 

Assume first that $r=0$, which implies that $(X,\pi)$ is a real form of $(\p^1_\c\times\p^1_\c,\pr_1)$, hence is isomorphic to $(\p^1_\r\times \p^1_\r,\pr_1)$ or to $(\p^1_\r\times (\p^1,{\emptyset}),\pr_1)$, see convention after Definition~\ref{dfn.birat.conic}. Taking $Q(x)=1$ or $Q(x)=-1$ gives the result.

Assume now that $r>0$, and denote by $s$ and $\bar{s}$ two conjugate imaginary sections of $\pi$ of self-intersection $-r$. Changing $\pi$ by an automorphism of $\p^1$, we can assume that $(1:0)$.  The singular fibres of $\pi$ are above the points $(a_1:1),\dots,(a_{2r}:1)$, where the $a_i$ are distinct real numbers.  Let $J = (J_1, J_2)$ be a partition of $\{a_1,\dots,a_{2r}\}$ into two sets of $r$ points. Let $\eta$ be the birational morphism (not defined over $\r$) which contracts the irreducible component of $\pi^{-1}((a_i:1))$ which intersects $s$ if $a_i\in J_1$ and the component which intersects $\bar{s}$ if $a_i\in J_2$. Then, the images of $s$ and $\bar{s}$ are two sections of self-intersection $0$. Thus we may assume that $\eta$ is a birational morphism of conic bundles $(S,\pi)\rightarrow (\p^1_\c\times\p^1_\c,\pr_1)$, where $S$ is the complex surface obtained by forgetting the real structure of $X$, $\pr_1$ is the projection on the first factor, and where $\eta(s)$ and $\eta(\bar{s})$ are equal to $\p^1_\c\times(0:1)$ and $\p^1_\c\times(1:0)$.

We write $P_{1}(x_{1},x_{2})=\prod_{a \in J_1} ({x_1}-a {x_2})$ and $P_{2}(x_{1},x_{2})=\prod_{a \in J_2} ({x_1}-a {x_2})$, and denote by $\alpha$ and $\sigma$ the self-maps of $S$, which are the lifts by $\eta$ of the
  following self-maps of $\p^1_{\c}\times \p^1_{\c}$:
 \[\begin{array}{l}\alpha'\colon\big( (x_1:x_2),(y_1:y_2)\big)\dasharrow \big(({x_1}:{x_2}),(-{y_2}\cdot P_{1}({x_1},{x_2}):{y_1}\cdot P_{2}(x_{1},x_{2})\big),\\ \vphantom{\Big (}
 \sigma'\colon\big( (x_1:x_2),(y_1:y_2)\big)\dasharrow \big((\overline{x_1}:\overline{x_2}),(-\overline{y_2}\cdot P_{1}(\overline{x_{1}},\overline{x_{2}}):\overline{y_1}\cdot P_{2}(\overline{x_{1}},\overline{x_{2}})\big).\end{array}\]

The map $\alpha'$ is a birational involution of $\p^1_{\c}\times \p^1_{\c}$, which is defined over $\r$, and whose base-points are precisely the $2r$ points $\{((x:1),(0:1))\ |\ x\in J_1\} \cup \{((x:1),(1:0))\ |\ x\in J_2\}$ blown-up by $\eta$. Since $\alpha'$ is an involution and $\eta$ is the blow-up of all of its base-points, $\alpha=\eta^{-1}\alpha'\eta$ is an automorphism of $S$, which belongs to $\aut(S,\pi)$. In consequence, $\sigma$ is an anti-holomorphic involution of $S$. 

Denote by $\sigma_X$ the anti-holomorphic involution on $S$ which gives the real structure of $X$. 
The map $\sigma_X\circ\sigma^{-1}$ belongs to $\aut(S,\pi)$ and acts trivially on the basis, since $\sigma$ and $\sigma_X$ have the same action on the basis. Moreover, since both $\sigma_X$ and $\sigma$ exchange the irreducible components of each singular fibre, $\sigma_X\circ\sigma^{-1}$ preserves any curve contracted by $\eta$ and is therefore the lift by $\eta$ of $\beta\colon \big((x_1:x_2),(y_1:y_2)\big)\mapsto \big((x_1:x_2),(\mu y_1:y_2)\big)$ for some $\mu\in \c^{*}$. It follows that $\sigma_X'=\eta\circ\sigma_X\circ\eta^{-1}=\beta\circ \sigma'$ is the map 
\[\sigma_X'\colon\big((x_1:x_2),(y_1:y_2)\big)\dasharrow \big((\overline{x_1}:\overline{x_2}),(-{\mu\cdot}\overline{y_2} P_1(\overline{x_1},\overline{x_2}):\overline{y_1} P_2(\overline{x_1},\overline{x_2}))\big).\]

Let us write $Q(x)=-\mu P_1(x,1)P_2(x,1)$, denote by $B\subset \C^3$ the affine hypersurface of equation $y^2+z^2=Q(x)$, and by $\pi_B\colon B\rightarrow \p^1$ the map $(x,y,z)\mapsto (x:1)$. Let $A=(B,\sigma_B)$, where $\sigma_B$ sends $(x,y,z)$ onto $(\bar{x},\bar{y},\bar{z})$.
Denote by $\theta\colon B\dasharrow \p^1_\c\times\p^1_\c$ the map that sends $(x,y,z)$ onto  $\big((x:1),(y-\im z:P_2(x,1))\big)$ if $P_2(x,1)\not=0$ and onto $\big((x:1),(-\mu P_1(x,1):y+\im z)\big)$ if $P_1(x,1)\not=0$. Then $\theta$ is a birational morphism, and $\theta^{-1}$ sends $  \big((x_1:x_2),(y_1:y_2)\big)$ on
\[\left(\frac{x_1}{x_2},\frac{1}{2}\left(\frac{y_1}{y_2}P_2(x_1,x_2)-\frac{y_2}{y_1} \mu P_1(x_1,x_2)\right),\frac{\im}{2}\left(\frac{y_1}{y_2}P_2(x_1,x_2)+\frac{y_2}{y_1} \mu P_1(x_1,x_2)\right)\right).\]

Observe that $\sigma_X' \theta=\sigma_B\theta$.
In consequence, $\psi=\eta^{-1}\circ\theta$ is a real birational map $A\dasharrow X$. 

Moreover, $\psi$ is an isomorphism from $B$ to the complement in $S$ of the union of $\pi^{-1}((1:0))$ and the pull-back by $\eta$ of $\p^1\times (0:1)$ and $\p^1\times (1:0)$. Indeed
let $x_0\in \C$. If $x_0\in \C$ is such that $Q(x_0)\not=0$, then $\theta$ restricts to an isomorphism from $\pi_B^{-1}((x_0:1))$ to  $\{((x_0:1),(y_1:y_2))\in \p^1_\c\times\p^1_\c\ |\ y_1y_2\not=0\}\cong \c^{*}$. If $Q(x_0)=0$, then $x_0\in J_1\cup J_2$, and the fibre $\pi_B^{-1}((x_0:1))$ consists of two lines of $\C^2$ which intersect, given by $y=\im z$ and $y=-\im z$. If $x_0\in J_1$, then the line $y+\im z=0$ is sent isomorphically by $\theta$ onto the fibre $\{((x_0:1),(y_1:y_2))\in \p^1_\c\times\p^1_\c\ |\ y_2\not=0\}\cong \c^{*}$, and the line $y-\im z$ is contracted on the point $((x_0:1),(0:1))$. The map $\psi$ sends thus isomorphically $\pi_B^{-1}((x_0:1))$ onto the fibre $\pi^{-1}((x_0:1))$ minus the two points corresponding to the two sections of self-intersection $-r$. The situation when $x_0\in J_2$ is similar. 

The map $\psi$ is therefore an inclusion $A\to X$ and, by construction, it satisfies all the properties stated in the lemma.
\end{proof}

\begin{proof}[Proof of Lemma~\ref{Lem:IsoEx}]Take a section $s$ of $\pi_Y$. If $s$ intersects its conjugate $\bar{s}$ into a real point $p$ (respectively into a pair of imaginary points $q_1$ and $q_2$), then blow-up the point $p$ (respectively $q_1$ and $q_2$), and contract the strict transform of the fibre of the blown-up point(s). Repeating this process, we obtain a minimal real conic bundle $(Z,\pi_Z)$ and a birational map $\phi\colon Y\dasharrow Z$ such that $\pi_Z\circ \phi= \pi_Y$ and $\phi(s)$ does not intersect its conjugate. 

If all the base-points of $\phi$ are imaginary, we set $\psi=\phi$ and $(X,\pi_X)=(Z,\pi_Z)$. Otherwise, by induction on the number of real base-points of $\phi$, it suffices to prove the existence of $\psi$ when $\phi$ is an elementary link centered at only one real point.

Denote by $q\in Z$ the real point which is the base-point of $\phi^{-1}$. Since $\pi_Y$ has at least one singular fibre, this is also the case for $\pi_Z$, and thus $I(Z,\pi_Z)$ is not the whole $\p^1(\r)$ (By Lemma~\ref{Lem:AffEx}). We may thus assume that $(1:0)\notin I(Z,\pi_Z)$, that $\pi_Z(q)=(1:1)$, and that the interval of $I(Z,\pi_Z)$ which contains $\pi_Z(q)$ is $\{(x:1)\in \mathbb{P}^1_\r\ |\ 0\leq x \leq a\}$ for some $a\in \r$, $a>1$. Take the affine surface $A\subset Z$ given by Lemma~\ref{Lem:AffEx}, which is  isomorphic to $y^2+z^2=Q(x)$ for some polynomial $Q$. Then, $Q(0)=Q(a)=0$ and $Q(x)>0$ for $0<x<a$, and we may assume that $Q(1)=1$. Denote by $s$ the section of $\pi_Z\colon Z\to \p^1_{\r}$ given locally by $y+\im z=\im x^n$, for some positive integer $n$. Its conjugate is given by $y-\im z=-\im x^n$, or $y+\im z=Q(x)/(-\im x^n)$. Thus, $s$ intersects $\bar{s}$ at some real point $p\in Z$, its image $x=\pi_Z(p)$ satisfies $Q(x)/(-\im x^n)=\im x^n$, or $Q(x)=x^{2n}$. Taking $n$ large enough, this can only happen when $x=0$ or $x=1$. The first possibility cannot occur since a section does not pass through the singular point of a singular fibre. Thus, $s$ intersects $\bar{s}$ at only one real point, which is $q$. In consequence, the strict pull-back by $\phi$ of $s$ is a section of $Y$ which intersects its conjugate at only imaginary points. This shows that $(Y(\r),\pi_Y)$ is isomorphic to an exceptional real conic bundle $(X,\pi_X)$.
 \end{proof}



\begin{cor}\label{Prp:Equivalences}
Let $(X,\pi_{X})$ and $(Y,\pi_{Y})$ be two minimal real conic bundles, and assume that either $\pi_X$ or $\pi_Y$ has at least one singular fibre. Then, the following are equivalent:

\begin{enumerate}
\item \label{Prp:Equivalences1} $I(X,\pi_X)=I(Y,\pi_Y)$;
\item \label{Prp:Equivalences3} there exists an isomorphism  $\varphi\colon X(\r)\to Y(\r)$ such that $\pi_Y\circ\varphi=\pi_X$.\end{enumerate}
\end{cor}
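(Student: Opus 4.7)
The implication $(2) \Rightarrow (1)$ is immediate from the definition: if $\varphi \colon X(\r) \to Y(\r)$ satisfies $\pi_Y \circ \varphi = \pi_X$, then $\pi_X(X(\r)) = \pi_Y(\varphi(X(\r))) = \pi_Y(Y(\r))$.

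For the converse, the first step is to observe that the hypothesis that at least one of the bundles has a singular fibre, combined with $I(X,\pi_X)=I(Y,\pi_Y)$, forces \emph{both} bundles to have singular fibres. Indeed, if $\pi_X$ had none then $X$ would be a real form of a Hirzebruch surface and $I(X,\pi_X)$ would be either $\emptyset$ or all of $\p^1(\r)$; however a minimal real conic bundle with a singular fibre already has a real point (the node of that fibre), and by Proposition~\ref{Prp:TopMinimal}(3) its real part is a disjoint union of $r\geq 1$ spheres, each projecting onto a proper closed subinterval of $\p^1(\r)$, so neither of those two alternatives can occur.

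With both bundles now known to have singular fibres, I would apply Lemma~\ref{Lem:IsoEx} to replace each of them, without altering the isomorphism class of the real parts over $\p^1_\r$, by an exceptional real conic bundle. Choosing a single real point outside the common set $I(X,\pi_X)=I(Y,\pi_Y)\neq\p^1(\r)$ and sending it to $(1:0)$ by the \emph{same} automorphism of $\p^1_\r$ applied on both sides, Lemma~\ref{Lem:AffEx} then yields affine models
\[A_X\colon\ y^2+z^2=Q_X(x),\qquad A_Y\colon\ y^2+z^2=Q_Y(x)\]
and inclusions $A_X\hookrightarrow X$, $A_Y\hookrightarrow Y$ that are isomorphisms on real parts compatible with the projections to $\p^1_\r$, where $Q_X$ and $Q_Y$ are polynomials with only simple real roots.

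The decisive observation is then that $Q_X$ and $Q_Y$ differ merely by a positive real scalar. Lemma~\ref{Lem:AffEx} identifies the real roots of $Q_X$ (resp.\ of $Q_Y$) with the endpoints of the intervals of $I(X,\pi_X)$ (resp.\ of $I(Y,\pi_Y)$) in the affine chart, and since all of these roots are simple, the equality $I(X,\pi_X)=I(Y,\pi_Y)$ forces $Q_Y=c\,Q_X$ for some $c\in\r^{*}$; the coincidence of the positivity loci $\{Q_X\geq 0\}=\{Q_Y\geq 0\}$ then gives $c>0$. Writing $c=\lambda^{2}$ with $\lambda>0$, the map $(x,y,z)\mapsto(x,\lambda y,\lambda z)$ is an $\r$-algebraic isomorphism $A_X\to A_Y$ commuting with the projections onto $\p^1_\r$, and composing with the identifications from Lemmas~\ref{Lem:IsoEx} and~\ref{Lem:AffEx} produces the required $\varphi$. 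The only real subtlety in this argument, and the point where I expect to need the most care, is purely a bookkeeping one: each application of Lemmas~\ref{Lem:IsoEx} and~\ref{Lem:AffEx} is allowed to twist the projection to $\p^1_\r$ by an automorphism of the base, and these twists must be kept compatible between the $X$- and $Y$-sides in order to secure $\pi_Y\circ\varphi=\pi_X$ on the nose rather than merely up to an automorphism of the base.
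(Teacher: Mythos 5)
Your proposal is correct and follows essentially the same route as the paper: reduce to exceptional conic bundles via Lemma~\ref{Lem:IsoEx}, pass to the affine models $y^2+z^2=Q(x)$ of Lemma~\ref{Lem:AffEx} with a common choice of the point at infinity outside $I(X,\pi_X)=I(Y,\pi_Y)$, deduce $Q_Y=\lambda Q_X$ with $\lambda>0$ from the coincidence of the (simple, real) roots and positivity loci, and conclude with the scaling $(x,y,z)\mapsto(x,\sqrt{\lambda}y,\sqrt{\lambda}z)$. Your preliminary observation that both bundles must have singular fibres, and your care with applying the same base automorphism on both sides, are points the paper leaves implicit (only the citation of Proposition~\ref{Prp:TopMinimal} is slightly off for the two-singular-fibre case, where the fact follows instead from Lemma~\ref{Lem:AffEx}).
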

\begin{proof}

It suffices to prove $\ref{Prp:Equivalences1})\Rightarrow \ref{Prp:Equivalences3})$.  
By Lemma~\ref{Lem:IsoEx}, we may assume that both $(X,\pi_X)$ and $(Y,\pi_Y)$ are exceptional.
We may now assume that the fibre over $(1:0)$ is not singular and use Lemma~\ref{Lem:AffEx}: let $A_X\subset X$ and $B_X\subset Y$ be the affine surfaces given by the lemma, with equations $y^2+z^2=Q_X(x)$ and $y^2+z^2=Q_Y(x)$ respectively. Since $I(X,\pi_X)=I(Y,\pi_Y)$, $Q_Y(x)=\lambda Q_X(x)$ for some positive $\lambda\in \r$. The map $(x,y,z)\mapsto ( x,\sqrt{\lambda}y,\sqrt{\lambda} z)$ then yields an isomorphism $(X(\r),\pi_X)\to (Y(\r),\pi_Y)$.
\end{proof}
The above result implies the next two corollaries.  The first one strengthen a result of Comessatti  \cite{Com1} (see also  \cite[Theorem 4.5]{Kol}). 

\begin{cor}\label{Cor:BirAutI}
Let $(X,\pi)$ and $(X',\pi')$ be two real conic bundles.
Assume that $(X,\pi)$ and $(X,\pi')$ are minimal. Then $(X(\r),\pi)$ and $(X'(\r),\pi')$ are isomorphic if and only if there exists an automorphism of $\p^1_{\r}$ that sends $I(X,\pi)$ on $I(X',\pi')$. \qed
\end{cor}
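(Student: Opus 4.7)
The plan is to recognise this corollary as a repackaging of Corollary~\ref{Prp:Equivalences} that accounts for the freedom of composing the projection with an automorphism of the base. Here I read $(X(\r),\pi)\cong(X'(\r),\pi')$ in the natural sense of the notation: an isomorphism $\varphi\colon X(\r)\to X'(\r)$ fitting into a commutative square $\pi'\circ\varphi=\alpha\circ\pi$ for some $\alpha\in\aut(\p^1_{\r})$, consistently with Definition~\ref{dfn.birat.conic}.

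For the implication $(\Leftarrow)$, I would start from $\alpha\in\aut(\p^1_{\r})$ with $\alpha(I(X,\pi))=I(X',\pi')$ and replace the projection of the first conic bundle by $\alpha\circ\pi$. Since precomposing by an automorphism of $\p^1_{\r}$ preserves minimality and translates the image set $I(X,\pi)$ by $\alpha$, the pair $(X,\alpha\circ\pi)$ is a minimal real conic bundle satisfying $I(X,\alpha\circ\pi)=I(X',\pi')$. Provided at least one of $\pi,\pi'$ has a singular fibre, Corollary~\ref{Prp:Equivalences} applied to $(X,\alpha\circ\pi)$ and $(X',\pi')$ yields the required isomorphism of pairs. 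The marginal case where neither has a singular fibre reduces to real forms of $(\p^1_{\c}\times\p^1_{\c},\pr_1)$ discussed in the proof of Lemma~\ref{Lem:AffEx}; there $I$ is either empty or equal to $\p^1(\r)$, and the conclusion is immediate from the classification recalled there.

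The implication $(\Rightarrow)$ is formal: from an isomorphism of pairs $(\varphi,\alpha)$ one computes
\[\alpha(I(X,\pi))=(\alpha\circ\pi)(X(\r))=(\pi'\circ\varphi)(X(\r))=\pi'(X'(\r))=I(X',\pi'),\]
so $\alpha$ is itself the sought automorphism of $\p^1_{\r}$.

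I do not foresee a genuine obstacle: the statement is essentially a reformulation of Corollary~\ref{Prp:Equivalences} in which one allows the additional flexibility of a base change on $\p^1_{\r}$. The only minor point requiring care is the degenerate no-singular-fibre case, which is dictated by the Hirzebruch classification recalled above and therefore does not introduce any real difficulty.
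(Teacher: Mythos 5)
Your main reduction coincides with the paper's: the corollary is meant as an immediate consequence of \ref{Prp:Equivalences}, obtained exactly as you do by replacing $\pi$ with $\alpha\circ\pi$ (which changes $I(X,\pi)$ into $\alpha(I(X,\pi))$ and preserves minimality), and your forward implication is the same formal computation. Up to that point there is nothing to add.

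The genuine problem is your treatment of the case where neither fibration has a singular fibre. A minimal real conic bundle without singular fibres is \emph{not} in general a real form of $(\p^1_\c\times\p^1_\c,\pr_1)$: that identification is valid only for \emph{exceptional} conic bundles, where Lemma~\ref{LemDef:Exc} provides a section of square $-r=0$; in general one only gets a real Hirzebruch surface $\mathbb{F}_n$ with arbitrary $n$. And in this degenerate case the asserted equivalence actually fails: $(\mathbb{F}_0,\pr_1)$ and $(\mathbb{F}_1,\pi)$ are both (vacuously) minimal real conic bundles with $I=\p^1(\r)$, yet $\mathbb{F}_0(\r)$ is a torus while $\mathbb{F}_1(\r)$ is a Klein bottle, so their real parts are not isomorphic. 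Hence the step ``the conclusion is immediate from the classification recalled there'' is not valid; the statement must be read as inheriting the hypothesis of \ref{Prp:Equivalences} that at least one of the two conic bundles has a singular fibre (this is how it is used throughout the paper, and it is consistent with the remark after Lemma~\ref{Lem:IsoEx} and with Corollary~\ref{Cor:Thm5Conic}, where the $r=0$ case requires the additional input that $X(\r)$ and $Y(\r)$ be isomorphic, giving the parity condition $m\equiv n \bmod 2$). If you want to cover the no-singular-fibre case, you must either add that hypothesis or add the parity/emptiness data to the invariant; $I(X,\pi)$ alone does not suffice.
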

\begin{cor}\label{Cor:Thm5Conic}
Let $(X,\pi_X)$ and $(Y,\pi_{Y})$ be two minimal conic bundles. Then, the following are equivalent:

\begin{enumerate}
\item \label{Cor:Thm5Conic1}$(X(\r),\pi_X)$ and $(Y(\r),\pi_Y)$ are isomorphic;
\item \label{Cor:Thm5Conic2}$(X,\pi_X)$ is birational to $(Y,\pi_Y)$ and $X(\r)$ is isomorphic to $Y(\r)$.\end{enumerate}
\end{cor}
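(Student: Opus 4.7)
The direction $(\ref{Cor:Thm5Conic1})\Rightarrow(\ref{Cor:Thm5Conic2})$ is immediate from the definitions: an isomorphism $(X(\r),\pi_X)\to(Y(\r),\pi_Y)$ of conic bundles between real parts is a birational map of conic bundles $X\dasharrow Y$ which restricts to an isomorphism of the real parts $X(\r)\to Y(\r)$, yielding both halves of~(\ref{Cor:Thm5Conic2}).

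For the converse, I would begin by invoking the fact recalled just after Definition~\ref{dfn.birat.conic} that $I(X,\pi_X)$ determines the birational class of $(X,\pi_X)$ as a real conic bundle. Hence, since $(X,\pi_X)$ and $(Y,\pi_Y)$ are birational as conic bundles, the subsets $I(X,\pi_X)$ and $I(Y,\pi_Y)$ of $\p^1(\r)$ coincide up to an automorphism of $\p^1_\r$. Composing $\pi_Y$ with such an automorphism changes neither its isomorphism class as a conic bundle nor the hypothesis that $X(\r)$ is isomorphic to $Y(\r)$, so I may assume $I(X,\pi_X)=I(Y,\pi_Y)$.

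The main case is then when at least one of $\pi_X$, $\pi_Y$ has a singular fibre: Corollary~\ref{Prp:Equivalences} applies directly and yields the desired isomorphism of conic bundles between real parts. It remains to treat the case where neither $\pi_X$ nor $\pi_Y$ has a singular fibre, in which case $X$ and $Y$ are real forms of Hirzebruch surfaces $\mathbb{F}_m$ and $\mathbb{F}_n$; when $X(\r)$ is empty the claim is trivial, and otherwise the hypothesis $X(\r)\cong Y(\r)$ forces the parities of $m$ and $n$ to agree, so a finite sequence of elementary transformations centered at pairs of conjugate imaginary points produces a birational map of conic bundles $\mathbb{F}_m\dasharrow\mathbb{F}_n$ which is an isomorphism on real points (only imaginary curves being blown up or contracted). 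The main subtlety, as I see it, is ensuring that the reduction to a common interval is legitimate---namely, that replacing $\pi_Y$ by its composition with an automorphism of the base preserves all hypotheses---after which the heavy lifting has been done in Corollary~\ref{Prp:Equivalences}.
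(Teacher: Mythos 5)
Your proof is correct and takes essentially the same route as the paper: you reduce to $I(X,\pi_X)=I(Y,\pi_Y)$ via a base automorphism (implicit in the paper's argument), apply Corollary~\ref{Prp:Equivalences} when a singular fibre is present, and settle the remaining Hirzebruch case ($r=0$) by the parity of $m,n$ and elementary transformations centered at imaginary points, just as the paper does.
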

\begin{proof}
The implication $\ref{Cor:Thm5Conic1})\Rightarrow \ref{Cor:Thm5Conic2})$ is evident. Let us prove the converse. 

Since $(X,\pi_{X})$ is birational to $(Y,\pi_{Y})$ and both of them are minimal, the number of singular fibres of $\pi_{X}$ and $\pi_{Y}$ is the same, equal to $2r$ for some non-negative integer~$r$.

Assume that $r=0$, which means that $X$ is an Hirzebruch surfaces $\mathbb{F}_{m}$ for some $m$ and that $Y=\mathbb{F}_{n}$ for some $n$. Since $X(\r)$ is isomorphic to $Y(\r)$, we have $m\equiv n \mod 2$. It is easy to prove that $(X(\r),\pi)$ and $(Y(\r),\pi)$ are isomorphic, by taking elementary links at two imaginary distinct fibres (see for example \cite[Proof of Theorem~6.1]{Ma06}). 

When $r>0$, already the fact that $(X,\pi_X)$ is birational to $(Y,\pi_Y)$ implies that $(X(\r),\pi_X)$ is isomorphic to $(Y(\r),\pi_Y)$ (Proposition~\ref{Prp:Equivalences}).
\end{proof}

\section{Conic bundles on del Pezzo surfaces}\label{Sec:ConicBunDPS}


In this section, we focus on surfaces admitting distinct minimal conic bundles. We will see that these surfaces are necessarily del Pezzo surfaces (Lemma~\ref{Lem:KX4}). We begin by the description of all possible minimal real conic bundles occurring on del Pezzo surfaces.

\begin{lem}\label{lem:minimaldp}
Let $V$ be is a subset of $\p^1(\r)$, then the following are equivalent: 
\begin{enumerate}
\item\label{lem:minimaldp1}
there exists a  minimal real conic bundle $(X,\pi)$ with $I(X,\pi)=V$ such that $X$ is a del Pezzo surface;
\item\label{lem:minimaldp2}
the set $V$ is a union of closed intervals, and $\#V\leq 3$.
\end{enumerate}
\end{lem}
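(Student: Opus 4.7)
Plan. My plan is to prove both implications of the equivalence separately.

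For $(\ref{lem:minimaldp1}) \Rightarrow (\ref{lem:minimaldp2})$, I would first note that $V = I(X,\pi)$ is always a finite union of closed intervals in $\p^1(\r)$, by compactness of $X(\r)$ and continuity of $\pi$; the real content is the bound $\#V \leq 3$. Writing $2r$ for the number of singular fibres of $\pi$, the conic-bundle formula $K_X^2 = 8 - 2r$ combined with the del Pezzo inequality $K_X^2 \geq 1$ forces $r \leq 3$. For $r \geq 1$, Lemmas~\ref{Lem:IsoEx} and~\ref{Lem:AffEx} reduce the problem to an exceptional model in which $V$ is the closure of $\{(x{:}1) : Q(x) \geq 0\}$ for some $Q \in \r[x]$ of degree $2r$ with $2r$ simple real roots. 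Since $Q$ takes matching signs at $\pm\infty$ and alternates sign at each root, exactly $r$ of the complementary intervals of $\p^1(\r)$ are positive, giving $\#V = r \leq 3$. For $r = 0$ the surface is a Hirzebruch surface and $V \in \{\emptyset, \p^1(\r)\}$, so $\#V \leq 1$.

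For $(\ref{lem:minimaldp2}) \Rightarrow (\ref{lem:minimaldp1})$, I would construct the required minimal conic bundle case by case. For $V = \emptyset$ take $\p^1_\r \times (\p^1,\emptyset)$; for $V = \p^1(\r)$ take $\p^1_\r \times \p^1_\r$; both are del Pezzo. Otherwise, after an automorphism of $\p^1_\r$, I may assume $\infty \notin V$ and $V = \bigcup_{i=1}^r [p_{2i-1}, p_{2i}]$ with $p_1 < \cdots < p_{2r}$. Setting $Q(x) = -\prod_i(x - p_i)$ and invoking Lemma~\ref{Lem:AffEx} yields an exceptional minimal conic bundle $(X_0, \pi_0)$ with $I(X_0,\pi_0) = V$ and a pair of conjugate imaginary sections $s_0, \bar s_0$ of self-intersection $-r$. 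For $r = 1$, every $(-1)$-curve on $X_0$ (including $s_0$ and $\bar s_0$ themselves) has $-K_{X_0}$-degree $1$, and a fibre $F$ has $-K_{X_0}$-degree $2$; since these classes generate the complex Mori cone, $-K_{X_0}$ is ample and $X_0$ is already del Pezzo.

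For $r \in \{2, 3\}$, the surface $X_0$ is not del Pezzo (as $-K_{X_0} \cdot s_0 = 2 - r \leq 0$), and I would remedy this by performing an elementary transformation at a pair of conjugate imaginary points $(p, \bar p)$ lying on two conjugate imaginary smooth fibres of $\pi_0$, chosen generically so that neither point lies on $s_0$, on $\bar s_0$, nor on any section of self-intersection $-r + 2$. This is a real birational map that is an isomorphism on real parts (only imaginary points are touched), so it preserves $V$, the minimality of the conic bundle, and the number $2r$ of singular fibres. A standard blow-up/contraction computation shows that the transformation shifts every complex section of sufficiently low self-intersection by exactly $+2$, so on the resulting surface $(X,\pi)$ the minimal section has self-intersection $-r + 2 \geq -1$, making $-K_X$ positive on every extremal ray of the complex Mori cone. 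Hence $X$ is del Pezzo with $I(X,\pi) = V$.

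The main obstacle will be securing the genericity of $(p, \bar p)$: one must avoid the finitely many sections of each bounded self-intersection containing either point, as well as arrange that the section through $p$ in the one-parameter family of self-intersection $-r + 2$ does not also pass through $\bar p$ (which would drag its self-intersection down by $2$ rather than up). Each of these constraints cuts out a proper closed subset of the chosen imaginary smooth fibre, so a generic choice on this fibre satisfies them all.
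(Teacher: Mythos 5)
Your implication $(\ref{lem:minimaldp1})\Rightarrow(\ref{lem:minimaldp2})$ is essentially the paper's argument ($2r=8-K_X^2$ and $K_X^2\ge 1$), with harmless extra detail. For $(\ref{lem:minimaldp2})\Rightarrow(\ref{lem:minimaldp1})$ you take a genuinely different route: the paper simply writes down the degree--$2$ del Pezzo surface $x^2m_1(a,b)+y^2m_2(a,b)+z^2m_3(a,b)=0$ in $\p^2_\r\times\p^1_\r$, a conic bundle with $I=V$, and when $\#V<3$ contracts conjugate components of the imaginary singular fibres; you instead start from the exceptional model and try to reach a del Pezzo surface by an elementary transformation at a conjugate pair of imaginary points. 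As written, this second direction has genuine gaps.

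First, the statement that the transformation ``shifts every complex section of sufficiently low self-intersection by exactly $+2$'' is not correct: a section through exactly one of $p,\bar p$ keeps its self-intersection, and one through both drops by $2$; accordingly your genericity requirement that $p$ lie on no section of self-intersection $-r+2$ is unsatisfiable, since for $r\in\{2,3\}$ these sections move in positive-dimensional families sweeping out the surface (your last paragraph points at the right fix, but only for ``the'' one family, and only for sections). Second, and more seriously, you deduce ampleness of $-K_X$ from the behaviour of sections alone, asserting that sections and the fibre class generate the Mori cone; this is exactly what has to be proved. You must exclude irreducible multisections $C$ on the transformed surface with $C^2\le -2$ (equivalently $-K_X\cdot C\le 0$); such curves can arise from multisections of $X_0$ that are singular at $p$ or pass through both $p$ and $\bar p$, and ruling them out requires a boundedness argument (only finitely many relevant classes, since $-K$ is big) plus genericity conditions on $p$ beyond those you impose, together with the criterion that a rational surface with $K^2\ge 1$ and no irreducible curve of self-intersection $\le -2$ is del Pezzo. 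A minor further point: Lemma~\ref{Lem:AffEx} goes from an exceptional conic bundle to the affine model $y^2+z^2=Q(x)$, not conversely, so producing $(X_0,\pi_0)$ from your $Q$ needs the construction inside its proof (or the classical existence statement), not the lemma as stated. All of this is repairable, but the paper's explicit double-cover construction bypasses these difficulties entirely.
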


\begin{proof}
The part $\ref{lem:minimaldp1})\Rightarrow \ref{lem:minimaldp2})$ is easy. Indeed, if $(X,\pi)$ is minimal, it is well-know that the number of singular fibres of $\pi$ is even, denoted $2r$, and that $2r = 8-(K_X)^2$. Since $-K_X$ is ample, $K_X^2\geq 1$, thus $r\leq 3$. The conclusion follows as $I(X,\pi)$ is the union of $r$ closed intervals.

Let us prove the converse. If  $V=\p^1(\r)$ or $V = \emptyset$, we take $(X,\pi)$ to be $(\p^1_\c\times\p^1_\c,\pr_1)$, where $\pr_1$ is the projection on the first factor, endowed with the anti-holomorphic map  that sends $\bigl((x_1:x_2),(y_1:y_2)\bigr)$ onto $\bigl((\overline{x_1}:\overline{x_2}),(\pm \overline{y_2}:\overline{y_1})\bigr)$.

Now we can assume that $V$ consists of $k$ closed intervals $I_1,\dots,I_k$, with $1\le k\le 3$. For $j=1,\dots, 3$, we denote by $m_j$ an homogenous form of degree $2$. If $j\le k$, we choose  that $m_j$ vanishes at the boundary of the interval $I_j$, and is non-negative on $I_j$. If $j>k$, we choose $m_j$ such that $m_j$ is positive on $\p^1(\r)$. In any case, we choose that $m_1\cdot m_2\cdot m_3$ has $6$ distinct roots. We consider the real surface given by
$$
X := \bigl\{\left((x:y:z),(a:b)\right)\in\mathbb{P}^2_\r\times\p^1_\r\ |\ x^2m_1(a,b)+y^2m_2(a,b)+z^2m_3(a,b)=0\bigr\}\;.
$$
The projection on $\p^2_\r$ is a double covering. A straightforward calculation shows that this covering is ramified over a smooth quartic. In consequence, $X$ is a smooth surface, and precisely a del Pezzo surface of degree $2$.
Taking $\pi\colon X\to \p^1_\r$  as the second projection, we obtain a conic bundle $(X,\pi)$ on the del Pezzo surface $X$ such that $I(X,\pi)=V$. If $k=3$, the conic bundle is minimal. Otherwise, we contract components in the imaginary singular fibres (corresponding to the roots of $m_j$ for $j>k$) to obtain the result.
\end{proof}
  
 Recall the following classical result, that will be useful throughout what follows.
  
  \begin{lem}\label{Lem:DPLinesP2}
  Let $\pi\colon S\to \p^1_{\C}$ be a complex conic bundle, and assume that $S$ is a del Pezzo surface, with $(K_S)^2=9-m\leq 7$. Then, there exists a birational morphism $\eta\colon S\to \p^2_\c$ which is a blow-up of $m$ points $p_1,\dots,p_m$ and which sends the fibres of $\pi$ onto the lines passing through $p_1$. The curves of self-intersection $-1$ of $S$ are
\begin{itemize}
\item  the exceptional curves $\eta^{-1}(p_1),\dots,\eta^{-1}(p_m)$; 
\item the strict transforms of the lines passing through $2$ of the $p_i$;
\item the conics passing through $5$ of the $p_i$;
\item the cubics passing through $7$ of the $p_i$ and being singular at one of these.

\end{itemize}
  \end{lem}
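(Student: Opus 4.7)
The plan is to split the proof into two parts: first constructing a birational morphism $\eta\colon S \to \p^2_\c$ with the desired compatibility with $\pi$, then enumerating the $(-1)$-curves of $S$. Since $S$ is a complex del Pezzo surface with $(K_S)^2 = 9 - m \leq 7$, the classical classification of complex del Pezzo surfaces shows that $S$ is isomorphic to the blow-up of $\p^2_\c$ at $m$ points $p_1, \dots, p_m$ in general position, which already provides \emph{some} initial birational morphism to modify. Denoting by $F$ the class of a fibre of $\pi$ in $\Pic(S)$, one has $F^2 = 0$ and $F \cdot K_S = -2$, and I want to choose $\eta$ so that $F = H - E_1$ in the basis $(H, E_1, \dots, E_m)$ attached to $\eta$, which is exactly the condition that the fibres of $\pi$ are strict transforms of lines through $p_1$.

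To achieve this, I would look for a $(-1)$-curve $E \subset S$ which is a section of $\pi$, i.e., an irreducible $(-1)$-curve with $E \cdot F = 1$. Such a section exists on any del Pezzo surface admitting a conic bundle: starting from an arbitrary section (which exists because the generic fibre is $\p^1_\c$ over an algebraically closed field), one concludes using the fact that the Mori cone of a del Pezzo surface is polyhedral and generated by $(-1)$-curves, together with $F$ being nef with $F^2 = 0$, that some $(-1)$-curve must pair with $F$ to exactly $1$. Once $E$ is found, I take it as $E_1$ and extend the contraction to a complete blow-down $\eta\colon S \to \p^2_\c$ by selecting $m - 1$ further mutually disjoint $(-1)$-curves (which exist and can be arranged to contract to $\p^2_\c$ rather than to some $\mathbb{F}_n$, using the del Pezzo property). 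In the resulting basis, the numerical identities $(F + E_1)^2 = 1$ and $(F + E_1) \cdot K_S = -3$ force $F + E_1 = H$, so $F = H - E_1$ as desired.

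For the classification of $(-1)$-curves, I would write $C = aH + \sum_{i=1}^m c_i E_i$ and translate $C^2 = -1$ and $C \cdot K_S = -1$ into the Diophantine system
\[
a^2 - \sum_{i=1}^m c_i^2 = -1, \qquad 3a + \sum_{i=1}^m c_i = 1.
\]
Enumerating nonnegative integers $a$ and the integer solutions $(c_i)$, one finds: $a = 0$ yields $C = E_i$; $a = 1$ yields $C = H - E_i - E_j$; $a = 2$ yields $C = 2H - E_{i_1} - \cdots - E_{i_5}$; and $a = 3$ yields $C = 3H - 2E_{i_0} - E_{i_1} - \cdots - E_{i_6}$. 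Effectivity and irreducibility of each such class follow from the general position of the $p_i$, which ensures the corresponding linear system on $\p^2_\c$ contains the expected irreducible member. The main obstacle is the first part: carefully producing a $(-1)$-section of $\pi$ and verifying that the remaining disjoint $(-1)$-curves can be blown down all the way to $\p^2_\c$. The Diophantine step is then routine.
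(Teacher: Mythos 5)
Your overall plan is reasonable and the Diophantine enumeration at the end is indeed routine (the paper simply quotes Demazure for it), but the first half -- the construction of $\eta$ adapted to $\pi$ -- has two genuine gaps. First, your existence argument for a $(-1)$-curve $E$ with $E\cdot F=1$ does not work as stated: writing the class of an arbitrary section as a nonnegative combination of classes of $(-1)$-curves only gives \emph{rational} coefficients, so from $1=\sum_j a_j\,(C_j\cdot F)$ with $a_j\ge 0$ and $C_j\cdot F\in\z_{\ge 0}$ you cannot conclude that some $C_j\cdot F$ equals $1$ (a priori the sum could be, say, $\tfrac12\cdot 2$). The assertion is true, but it needs a different argument. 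Second, and more seriously, the concluding step is incorrect: the conditions $(F+E_1)^2=1$ and $(F+E_1)\cdot K_S=-3$ do \emph{not} force $F+E_1=H$, since for instance $2H-E_2-E_3-E_4$ has the same invariants, corresponding to $F=2H-E_1-E_2-E_3-E_4$ with $F\cdot E_1=1$. This is not hypothetical: on the blow-up of $\p^2_\c$ at four general points, the pencil of conics through the four points gives a conic bundle for which $E_1$ is a $(-1)$-section, and for the obvious blow-down its fibres map to conics through all four points, not to lines through $p_1$. So an arbitrary completion of the contraction of $E$ to a morphism onto $\p^2_\c$ will not do; the blow-down must be chosen compatibly with the fibration. (A further small point: the enumeration of solutions with $a\le 3$ is complete only for $m\le 7$; for $m=8$ there are additional classes of degree up to $6$.)

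The paper's proof avoids both problems at once and is shorter: contract one irreducible component in each singular fibre of $\pi$ (there are $m-1\ge 1$ of them), obtaining a birational morphism of conic bundles $\epsilon\colon S\to\mathbb{F}_n$; since a blow-down of a del Pezzo surface is again del Pezzo, $n\in\{0,1\}$, and by switching the contracted component in one singular fibre one may assume $n=1$; contracting the exceptional section of $\mathbb{F}_1$ onto a point $p_1$ then yields $\eta$, and by construction the fibres of $\pi$ are sent to the lines through $p_1$. Your approach can be repaired along the same lines -- once a $(-1)$-section $E$ is found, contract in each singular fibre the component \emph{not} meeting $E$, so that $E$ becomes the exceptional section of $\mathbb{F}_1$ -- but then the detour through $E$ and the purely numerical identification of $F$ become unnecessary.
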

  
  \begin{proof}
  Denote by $\epsilon$ the contraction of one component in each singular fibre of $\pi$. Then, $\epsilon$ is a birational morphism of conic bundles -- not defined over $\r$ -- from $S$ to a del Pezzo surface which is also an Hirzebruch surface. Changing the contracted components, we may assume that $\epsilon$ is a map $S\rightarrow \mathbb{F}_1$. Contracting the exceptional section onto a point $p_1\in \p^2_\c$, we get a birational map $\eta\colon S\rightarrow \p^2_\c$ which is the blow-up of $m$ points $p_1,\dots,p_m$ of $\p^2_\c$, and which sends the fibres of $\pi_1$ onto the lines passing through $p_1$. The description of the $(-1)$-curves is well-known and may be found for example in \cite{bib:DemDPezzo}.
  \end{proof}
  
  \begin{lem}\label{Lem:KX4}
  Let $\pi_1\colon X\to \p^1_\r$ be a minimal real conic bundle. Then, the following conditions are equivalent:
  \begin{enumerate}
  \item\label{Lem:KX4-1}
 There exist a real conic bundle $\pi_2\colon X\rightarrow \p^1_{\r}$, such that $\pi_1$ and $\pi_2$ induce distinct foliations on $X(\c)$.
 \item\label{Lem:KX4-2}
   Either  $X$ is isomorphic to $\p^1_{\r}\times\p^1_{\r}$, or $X$ is a del Pezzo surface of degree $2$ or~$4$. 
     \end{enumerate}
     Moreover, if the conditions are satisfied, then the following occur:
    \begin{enumerate}
     \item[$a)$]
     The map $\pi_2$ is unique, up to an automorphism of $\p^1_\r$.
     \item[$b)$]
     There exist $\alpha \in \aut(X)$ and $\beta\in \aut(\p^1_\r)$ such that $\pi_1\alpha=\beta\pi_2$. Moreover, if $X$ is a del Pezzo surface of degree~$2$, $\alpha$ may be chosen to be the Geiser involution.
     \item[$c)$]
     Denoting by $f_1,f_2\subset \Pic(X)$ the divisors of the general fibre of respectively $\pi_1$ and $\pi_2$, we have $f_1+f_2= -cK_X$ where $c=4/(K_X)^2\in \n\cdot \frac{1}{2}$.
     \end{enumerate}
  \end{lem}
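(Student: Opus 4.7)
The plan is to exploit the rank-2 Picard lattice forced by minimality to pin down the class of any second conic bundle, deduce the del Pezzo property from the resulting proportionality with $-K_X$, and finally exhibit the symmetry $\alpha$ case by case.

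Assume (1). By Theorem~\ref{Thm:ClassicMinimal}, minimality gives $\rho(X)=2$, and since $f_1\cdot K_X=-2\neq 0$, the pair $\{f_1,K_X\}$ is a $\mathbb{Q}$-basis of $\Pic(X)\otimes\mathbb{Q}$. Writing $f_2=af_1+bK_X$, the constraints $f_2^2=0$ and $f_2\cdot K_X=-2$ form a quadratic system with exactly two solutions: $f_2=f_1$ (excluded since the foliations differ) and $f_2=-f_1-(4/K_X^2)K_X$. This establishes (c) and, since $\pi_2$ is recovered from the pencil $|f_2|$, also (a).

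I would next deduce that $X$ is del Pezzo. By (c), $-K_X=(K_X^2/4)(f_1+f_2)$ is a positive rational multiple of the nef class $f_1+f_2$, and is therefore nef; distinctness of $f_1,f_2$ combined with $\rho(X)=2$ forces $f_1\cdot f_2>0$, hence $K_X^2>0$. For ampleness via Nakai--Moishezon it suffices to rule out an irreducible curve $C$ with $-K_X\cdot C=0$. Such a $C$ satisfies $f_1\cdot C=f_2\cdot C=0$, hence is contracted by the generically finite morphism $(\pi_1,\pi_2)\colon X\to\p^1\times\p^1$ (of degree $f_1\cdot f_2=8/K_X^2$); so $C$ must be a $(-1)$-curve component of singular fibres of both bundles, and adjunction gives the contradiction $K_X\cdot C=-1\neq 0$. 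Thus $-K_X$ is ample. The integrality requirement $(4/K_X^2)K_X\in\Pic(X)$, combined with $K_X^2\in\{2,4,6,8\}$ (from $K_X^2=8-2r$ and $K_X^2>0$), rules out $K_X^2=6$ by direct inspection of the Picard lattice (on the complexification, $K=-3\ell+e_1+e_2+e_3$ is not $3$-divisible). For $K_X^2=8$ one has $r=0$, so $X$ is a real form of $\mathbb{F}_0$; the existence of two real conic bundles with base $\p^1_\r$ selects the product $\p^1_\r\times\p^1_\r$.

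For the converse and for (b), I would proceed case by case. If $X=\p^1_\r\times\p^1_\r$, take $\pi_2$ the second projection and $\alpha$ the factor swap. If $X$ is del Pezzo of degree~$2$, let $\sigma\in\aut(X)$ be the Geiser involution (the covering involution of $X\to\p^2$ defined by $|{-}K_X|$); a direct computation yields $\sigma^*D=-D+(D\cdot K_X)K_X$ on $\Pic(X)$, so $\sigma^*f_1=-f_1-2K_X$ is the class of a new conic bundle $\pi_2:=\pi_1\circ\sigma$, and $\alpha=\sigma$ realises (b). If $X$ is del Pezzo of degree~$4$, then $f_2=-K_X-f_1\in\Pic(X)$ is one of the ten conic bundle classes (which pair up summing to $-K_X$); the pencil $|f_2|$ is base-point-free by Riemann--Roch together with the ampleness of $-K_X$, giving a real conic bundle $\pi_2$. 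An $\alpha\in\aut(X)$ swapping $\pi_1,\pi_2$ is then obtained from the description $X=Q_\lambda\cap Q_\mu\subset\p^4$ as the intersection of two real quadrics: the pair $\{f_1,f_2\}$ corresponds to the two rulings of a singular real member of the pencil, and the involution exchanging those rulings on the cone extends to $\aut(X)$ and is defined over $\r$.

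\emph{Main obstacle.} The most delicate step is the degree-$4$ case of (b): producing an $\alpha\in\aut(X)$ swapping $\pi_1$ and $\pi_2$ requires the explicit quadric-pencil construction and a check that the ruling-swap involution descends to an automorphism of $X$ defined over $\r$, in contrast with the clean Geiser-involution argument in degree~$2$. Ruling out $K_X^2=6$ in the classification step also needs a careful look at the real Picard sublattice.
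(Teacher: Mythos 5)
Your treatment of $(1)\Rightarrow(2)$, $a)$ and $c)$ is essentially the paper's argument (the rank-two lattice computation giving $f_2=-f_1-(4/K_X^2)K_X$), with a mildly different route to ampleness: you use Nakai--Moishezon and rule out curves $C$ with $K_X\cdot C=0$ via the contraction by $(\pi_1,\pi_2)$, whereas the paper checks directly that $C\cdot(f_1+f_2)>0$ for every effective $C$; both work. Two small points there: in the case $K_X^2=8$ you cannot conclude ``$X$ is a real form of $\mathbb{F}_0$'' from $r=0$ alone, since $\mathbb{F}_1$ is also a del Pezzo of degree $8$ carrying a (vacuously minimal) conic bundle -- you must invoke the integrality you stated ($-\frac{1}{2}K_X\in\Pic(X)$ fails for $\mathbb{F}_1$, whose canonical class is primitive) or the uniqueness of its ruling; and the degree-$2$ case of $b)$ via the formula $\sigma^*D=(D\cdot K_X)K_X-D$ is fine (the paper itself uses this formula later, and argues here instead via the fixed quartic).

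The genuine gap is the degree-$4$ case of $b)$, which is exactly where the paper does its real work. You assert that the involution exchanging the two plane-families of the singular quadric cone associated with $\{f_1,f_2\}$ ``extends to $\aut(X)$ and is defined over $\r$,'' but no argument is given, and this is not a formality. Over $\c$, after simultaneous diagonalization $X=\{\sum x_i^2=\sum\lambda_ix_i^2=0\}\subset\p^4$, the natural ruling-swapping automorphisms are the sign changes $x_j\mapsto-x_j$ attached to the \emph{other} singular members of the pencil; such a sign change is defined over $\r$ only if the corresponding singular quadric (root $\lambda_j$) is real, and for a generic degree-$4$ surface these are the only automorphisms available. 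So one must show that the anti-holomorphic involution $\sigma$ is constrained enough -- and this is precisely where minimality of the real conic bundle enters. The paper establishes this by observing that minimality forces $\sigma$ to exchange the two components of each of the four singular fibres, deducing from this the full permutation action of $\sigma$ on the sixteen $(-1)$-curves, exhibiting an explicit quadratic Cremona involution $\alpha$ (based at three of the five blown-up points, exchanging the other two), and verifying that $\alpha$ and $\sigma$ commute by comparing their actions on the sixteen curves; equivalently, in your picture, minimality forces all five singular members of the pencil to be real with the rulings of the distinguished one individually real, which is what makes a real ruling-swap exist. Your proposal flags this as the ``main obstacle'' but does not carry it out, so assertion $b)$ (and with it the full strength of the lemma used later, e.g.\ in Theorem~\ref{Thm:Birapport}) remains unproved in the degree-$4$ case; note that $(2)\Rightarrow(1)$ itself is fine in that case, since your pencil $|f_2|$ argument does produce a second real conic bundle.
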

  \begin{proof}
  
We now prove that $\ref{Lem:KX4-1})$ implies $\ref{Lem:KX4-2})$, $a)$, and $c)$. 
 Assuming the existence of $\pi_2$, we denote by $f_i$ the divisor of the fibre of $\pi_i$ for $i=1,2$.
  We have $(f_1)^2=(f_2)^2=0$ and by adjunction formula $f_1\cdot K_X=f_2\cdot K_X=-2$, where $K_X$ is the canonical divisor. Let us write $d=(K_X)^2$.

Since $(X,\pi_1)$ is minimal, $\Pic(X)$ has rank $2$, hence $f_1=aK_X+bf_2$, for some $a,b\in \mathbb{Q}$. Computing $(f_1)^2$ and $f_1\cdot K_X$ we find respectively $0=a^2d-4ab=a(ad-4b)$ and $-2=ad-2b$. If $a=0$, we find $f_1=f_2$, a contradiction. Thus, $4b=ad$ and $2b=ad+2$, which yields $b=-1$ and $ad=-4$, so $f_1+f_2=-4/d\cdot K_X$. This shows that $f_2$ is uniquely determined by $f_1$, which is the assertion $a)$.

Denote as usual by $S$ the complex surface associated to $X$.
Let $C\in \Pic(S)$ be an effective divisor, with reduced support, and let us prove that $C\cdot (f_1+f_2)>0$. 
Since $C$ is effective, $C\cdot f_1\geq 0$ and $C\cdot f_2\geq 0$. 
If $C\cdot f_1=0$, then the support of $C$ is contained in one fibre of $\pi_1$. 
If $C$ is a multiple of $f_1$, then $C\cdot f_2>0$; otherwise, $C$ is a multiple of a $(-1)$-curve contained in a singular fibre of $f_1$, and the orbit of $C$ by the anti-holomorphic involution is equal to a multiple of $f_1$, whence $C\cdot f_2>0$. 

Since $f_1+f_2$ is ample, and $f_1+f_2=-4/d\cdot K_X$ either $K_X$ or $-K_X$ is ample. 
The surface $X$ being geometrically rational, the former cannot occur, whence $d>0$. 

If $S$ is isomorphic to $\p^1_{\c}\times\p^1_{\c}$,  the existence of $\pi_1,\pi_2$ shows that $X$ is isomorphic to $\p^1_{\r}\times\p^1_{\r}$. 
Otherwise, $K_X$ is not a multiple in $\Pic(X_{\c})$ and thus $d$ is equal to $1$, $2$ or $4$. 
The number of singular fibres being even and equal to $8-(K_X)^2$, the only possibilities are then $2$ and $4$.

We have proved that $\ref{Lem:KX4-1})$ implies $\ref{Lem:KX4-2})$, $a)$, and $c)$. 

Assume now that $X=(S,\sigma)$ is $\p^1_\r\times\p^1_\r$ or a del Pezzo surface of degree $2$ or~$4$. We construct an automorphism $\alpha$ of $X$ which does not belong to $\aut(X,\pi)$. Then, by taking $\pi_2=\pi_1\alpha$ we get assertion $\ref{Lem:KX4-1})$. Taking into account the unicity of $\pi_2$, we get $b)$.

If $X$ is $\p^1_{\r}\times\p^1_{\r}$, the two conic bundles are given by the projections on each factor, and we can get  for $\alpha$ the swap of the factors.

If $X$ is a del Pezzo surface of degree $2$, the anti-canonical map $\doublecov\colon X\rightarrow \p^2$ is a double covering ramified along a smooth quartic, cf. e.g. \cite{bib:DemDPezzo}. Let $\alpha$ be the involution associated to the double covering -- $\alpha$ is classically called the \emph{Geiser involution}. It fixes a smooth quartic, hence cannot preserve any conic bundle.

The remaining case is when $X$ is a del Pezzo surface of degree $4$. By Lemma~\ref{Lem:DPLinesP2}, there is a birational map $\eta\colon S\rightarrow \p^2_\c$ which is the blow-up of five points $p_1,\dots,p_5$ of $\p^2_\c$, no three being collinear and which sends the fibres of $\pi_1$ on the lines passing through $p_1$ . 
There are $16$ exceptional curves (curves isomorphic to $\p^1_\c$ of self-intersection $(-1)$) on $S$:
\begin{itemize}
\item  $E_1=\eta^{-1}(p_1),...,E_5=\eta^{-1}(p_5)$
  ($5$ curves);
  \item the strict transforms of the lines passing through $p_i$ and $p_j$, denoted by $L_{ij}$  ($10$ curves);
  \item the strict transform of the conic passing through the five points, denoted by $\Gamma$.
  \end{itemize}
  
  Note that the four singular fibres of $\pi_1$ are $E_i\cup L_{ij}$, $i=2,\dots,5$, and that $\sigma$ exchanges thus $E_i$ and $L_{ij}$ for $i=1,\dots,5$. The intersection form being preserved, this implies that $\sigma$ acts on the $16$ exceptional curves as
\[(E_2\ L_{12})(E_3\  L_{13})(E_4\  L_{14})(E_5\  L_{15})(E_1\  \Gamma)(L_{23}\  L_{45})(L_{24}\  L_{35})(L_{25}\  L_{34}).\]

After a linear change of coordinates, we may assume that $p_1=(1:1:1)$, $p_2=(1:0:0)$, $p_3=(0:1:0)$, $p_4=(0:0:1)$ and $p_5=(a:b:c)$ for some $a,b,c\in \c^{*}$. 
Denote by $\phi$ the birational involution $(x:y:z)\dasharrow (ayz:bxz:cxy)$ of $\p^2_\c$. 
Since the base-points of $\phi$ are $p_2,p_3,p_4$ and since $\phi$ exchanges $p_1$ and $p_5$, the map $\alpha=\eta^{-1}\phi\eta$ is an automorphism of $S$. 
Its action on the $16$ exceptional curves is given by the permutation
\[(L_{23}\ E_4)(L_{24}\  E_3)(L_{34}\  E_2)(L_{12}\  L_{25})(L_{13}\  L_{35})(L_{14}\  L_{45})(\Gamma\  L_{15})(E_1\  E_5).\]
Observe that the actions of $\alpha$ and $\sigma$ on the set of $16$ exceptional curves commute. This means that $\alpha \sigma \alpha^{-1}\sigma^{-1}$ is an holomorphic automorphism of $S$ which preserves any of the $16$ curves. 
It is the lift of an automorphism of $\p^2_\c$ that fixes the $5$ points $p_1,\dots,p_5$ and hence is the identity. 
Consequently, $\alpha$ and $\sigma$ commute, so $\alpha\in \aut(X)$. 
Since $\phi$ sends a general line passing though $p_1$ onto a conic passing through $p_2,\dots,p_5$, $\alpha$ belongs to $\aut(X)\backslash \aut(X,\pi)$.
  \end{proof}
  \begin{cor}\label{Cor:23comp2conicbundles}
Let $X$ be a  minimal geometrically rational real surface, which is not rational. Then, the following are equivalent:
\begin{enumerate}
\item\label{Cor:23comp2conicbundles1}
$\#X(\r)=2$ or $\#X(\r)=3$;
\item\label{Cor:23comp2conicbundles2}
There exists a geometrically rational real surface $Y(\r)$ isomorphic to $X(\r)$, and such that $Y$ admits two minimal conic bundles $\pi_1\colon Y\to \p^1_\r$ and $\pi_2\colon Y\to \p^1_\r$ inducing distinct foliations on $Y(\c)$. 
\end{enumerate}
  \end{cor}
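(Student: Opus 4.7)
The plan is to combine the classification of minimal non-rational geometrically rational real surfaces (Theorem~\ref{Thm:ClassicMinimal} and Proposition~\ref{Prp:TopMinimal}) with the constructions of Lemmas~\ref{lem:minimaldp} and~\ref{Lem:KX4}.

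For the direction $\ref{Cor:23comp2conicbundles1})\Rightarrow\ref{Cor:23comp2conicbundles2})$, the classification forces $X$ into case~(3) of Theorem~\ref{Thm:ClassicMinimal}, since the del Pezzo options with $\rho=1$ would give $\#X(\r)\in\{4,5\}$ by Proposition~\ref{Prp:TopMinimal}. Hence $X$ carries a minimal conic bundle $\pi_X$ with $2r$ singular fibres and $\#X(\r)=r\in\{2,3\}$, and $V:=I(X,\pi_X)$ is a union of $r\leq 3$ closed intervals. I feed $V$ into Lemma~\ref{lem:minimaldp} to obtain a minimal real conic bundle $(Y,\pi_1)$ with $I(Y,\pi_1)=V$ and $Y$ \emph{a del Pezzo surface}. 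Corollary~\ref{Prp:Equivalences}, applicable because there are $2r\geq 4$ singular fibres, then produces an isomorphism $X(\r)\cong Y(\r)$ commuting with the two projections. Since $(K_Y)^2=8-2r\in\{2,4\}$, Lemma~\ref{Lem:KX4} supplies a second real conic bundle $\pi_2\colon Y\to\p^1_\r$ inducing a foliation on $Y(\c)$ distinct from that of $\pi_1$. Its minimality follows from the additional assertion of Lemma~\ref{Lem:KX4} that $\pi_1\alpha=\beta\pi_2$ for some $\alpha\in\aut(Y)$ and $\beta\in\aut(\p^1_\r)$: the map $\alpha$ preserves the real structure, so the imaginary character of the components of any real singular fibre of $\pi_1$ is transferred to $\pi_2$.

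For the converse $\ref{Cor:23comp2conicbundles2})\Rightarrow\ref{Cor:23comp2conicbundles1})$, Lemma~\ref{Lem:KX4} restricts $Y$ to $\p^1_\r\times\p^1_\r$ or a del Pezzo surface of degree $2$ or $4$. The first option would force $Y$ rational with $Y(\r)\cong S^1\times S^1$ connected, hence $\#X(\r)=1$, contradicting Proposition~\ref{Prp:TopMinimal}(1), which guarantees $\#X(\r)\geq 2$ for every minimal non-rational geometrically rational real surface. In the remaining two cases $\pi_1$ has $8-(K_Y)^2\in\{4,6\}$ singular fibres, and $I(Y,\pi_1)$ is a union of $2$ or $3$ closed intervals. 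Over each such maximal interval, the smooth real fibres $\p^1(\r)$ degenerate at both boundary points to a single real node, each singular fibre being made of two conjugate imaginary components meeting at a real point, producing one topological sphere per interval. Therefore $\#X(\r)=\#Y(\r)$ equals the number of intervals, namely $2$ or $3$.

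No step is a serious obstacle: the key insight is that Lemma~\ref{lem:minimaldp} is engineered precisely to realise every admissible interval pattern on a \emph{del Pezzo} model, which is exactly the hypothesis under which Lemma~\ref{Lem:KX4} furnishes a second transverse conic bundle. The only delicate points are verifying the minimality of $\pi_2$ and correctly reading off $\#Y(\r)$ from the interval decomposition of $I(Y,\pi_1)$, both of which follow immediately from the machinery already in place.
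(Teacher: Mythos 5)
Your proof is correct and follows essentially the same route as the paper: Theorem~\ref{Thm:ClassicMinimal} and Proposition~\ref{Prp:TopMinimal} to get a minimal conic bundle with $4$ or $6$ singular fibres, Lemma~\ref{lem:minimaldp} to realise the same interval set on a del Pezzo surface, Corollary~\ref{Prp:Equivalences} (the paper cites the equivalent Corollary~\ref{Cor:BirAutI}) for the isomorphism of real parts, and Lemma~\ref{Lem:KX4} in both directions. Your explicit check that $\pi_2$ is minimal via assertion $b)$ of Lemma~\ref{Lem:KX4} is a small useful addition that the paper leaves implicit.
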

  \begin{proof}
$[\ref{Cor:23comp2conicbundles2})\Rightarrow \ref{Cor:23comp2conicbundles1})]$ By Lemma~\ref{Lem:KX4}, $Y$ is then a del Pezzo surface, which has degree $2$ or $4$ since $Y$ is not rational. This implies that $\#Y(\r)=2$ or $\#Y(\r)=3$ by Proposition~\ref{Prp:TopMinimal}.

$[\ref{Cor:23comp2conicbundles1})\Rightarrow \ref{Cor:23comp2conicbundles2})]$.  According to Theorem~\ref{Thm:ClassicMinimal} and  Proposition~\ref{Prp:TopMinimal}, $(1)$ implies the existence of a minimal real conic bundle structure $\pi_{X}\colon X\to \p^1_{\r}$ with $4$ or $6$  singular fibres. This condition is equivalent to the fact that $I(X,\pi_{X})$ is the union of $2$ or $3$ intervals. According to Lemma~\ref{lem:minimaldp}, 
there exists a  minimal real conic bundle $(Y,\pi_{1})$ such that $Y$ is a del Pezzo surface and $I(Y,\pi_{1})=I(X,\pi_{X})$. Corollary~\ref{Cor:BirAutI} shows that $(X(\r),\pi_{X})$ and $(Y,\pi_{1})$ are isomorphic. Moreover
Lemma~\ref{Lem:KX4} yields the existence of $\pi_{2}$. 
  \end{proof}
  
\section{Equivalence of surfaces versus equivalence of conic bundles}\label{Sec:Equiv}
 
This section is devoted to the proof of Theorem~\ref{Thm:Diff+Bir=DiffBir}. From Theorem~\ref{Thm:ClassicMinimal} and Proposition~\ref{Prop:DP1DP2rho1}, it remains to solve the conic bundle case, which is done in Theorem~\ref{thm:isom}.
 First of all, we correct an existing inaccuracy in the literature; in \cite[Exercice 5.8]{Kol} or \cite[VI.3.5]{Sil}, it is asserted that all minimal real conic bundles with four singular fibres belong to a unique birational equivalence class.
To the contrary, the following general result, which includes the case with four singular fibres, occurs:

\begin{thm}\label{Thm:Birapport}
Let  $\pi_X\colon X\rightarrow \p^1_{\r}$ and $\pi_Y\colon Y\rightarrow \p^1_{\r}$ be two real conic bundles, and suppose that either $X$ or $Y$ is non-rational. Then, the following are equivalent:
\begin{enumerate}
\item\label{Thm:Birapport1}
 The two real surfaces $X$ and $Y$ are birational.
 \item\label{Thm:Birapport2}
 The two real conic bundles $(X,\pi_X)$ and $(Y,\pi_Y)$  are birational.
 \item\label{Thm:Birapport3}
There exists an automorphism of $\p^1$ which sends $I(X,\pi_X)$ onto $I(Y,\pi_Y)$.
  \end{enumerate}
  Moreover, if the number of singular fibres of $\pi_{X}$ is at least $8$, then $\bir(X)=\bir(X,\pi_{X})$.
  \end{thm}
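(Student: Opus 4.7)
The plan is to prove the three implications $(2)\Rightarrow(1)$, $(2)\Rightarrow(3)$, $(3)\Rightarrow(2)$, and the main implication $(1)\Rightarrow(2)$, and then to extract the \emph{moreover} statement from the last argument; the equivalences $(1)\Leftrightarrow(2)\Leftrightarrow(3)$ follow at once.

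The implication $(2)\Rightarrow(1)$ is immediate from the definitions. For $(2)\Rightarrow(3)$, a birational map of conic bundles $\psi\colon(X,\pi_X)\dasharrow(Y,\pi_Y)$ comes with an automorphism $\alpha\in\aut(\p^1_\r)$ satisfying $\pi_Y\circ\psi=\alpha\circ\pi_X$; since $\psi$ is defined over $\r$, it sends real points to real points outside a finite set of indeterminacies, so $\alpha$ sends $I(X,\pi_X)$ onto $I(Y,\pi_Y)$ (two closed unions of intervals coinciding off a finite set must coincide). For $(3)\Rightarrow(2)$, I would first reduce to the minimal case: the minimalization of a real conic bundle, obtained by contracting real $(-1)$-curves in real singular fibres with real components together with pairs of disjoint conjugate imaginary $(-1)$-curves, is a birational morphism of conic bundles that leaves $I$ unchanged. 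After replacing both $(X,\pi_X)$ and $(Y,\pi_Y)$ by their minimalizations, Corollary~\ref{Cor:BirAutI} translates the equality of the $I$'s up to $\aut(\p^1_\r)$ into an isomorphism between the real parts compatible with the bundle projections, which is in particular a birational map of conic bundles. Composing with the minimalization morphisms on both sides yields a birational map of conic bundles $X\dasharrow Y$.

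The main work lies in $(1)\Rightarrow(2)$. Applying Iskovskikh's factorization theorem \cite[Theorems~2.5 and~2.6]{IskFact}, I would decompose any birational map $X\dasharrow Y$ into a sequence of elementary Sarkisov links between minimal real Mori fiber spaces. Since $X$ (equivalently $Y$) is non-rational, every intermediate Mori fiber space is non-rational, and by Theorem~\ref{Thm:ClassicMinimal} it is either a del Pezzo surface of degree~$1$ or~$2$ with $\rho=1$, or a minimal conic bundle. By Proposition~\ref{Prop:DP1DP2rho1}, the former surfaces are birationally rigid, which prevents their appearance as intermediate Mori fiber spaces in a decomposition linking two conic bundles. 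Hence every intermediate Mori fiber space is a minimal conic bundle, and the surviving links are of Type~II (elementary transformations at pairs of conjugate imaginary points in smooth fibres, which preserve both the conic bundle structure and $I$) or Type~IV (switches of the conic bundle structure on a single surface admitting two such, which by Lemma~\ref{Lem:KX4} occur only on del Pezzo surfaces of degree~$2$ or~$4$ and are realised by an automorphism $\alpha$ satisfying $\pi_1\alpha=\beta\pi_2$ for some $\beta\in\aut(\p^1_\r)$). Each link can thus be incorporated into a composed birational map of conic bundles $X\dasharrow Y$, establishing~(2).

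The \emph{moreover} statement follows by a refinement of this argument: if $\pi_X$ has $2r\geq 8$ singular fibres, then $(K_X)^2=8-2r\leq 0$, which rules out $X$ being a del Pezzo surface. By Lemma~\ref{Lem:KX4}, $X$ admits a unique conic bundle structure up to $\aut(\p^1_\r)$, and the same holds for every minimal conic bundle birational to $X$ (the invariant $2r$ being preserved by Type~II links, and no Type~IV link being available once the relevant del Pezzo degrees are excluded). Hence no Type~IV link appears in the Iskovskikh decomposition of any $\phi\in\bir(X)$, only Type~II links remain, all preserving the conic bundle, and so $\bir(X)=\bir(X,\pi_X)$. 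The main obstacle in the overall proof is the careful application of Iskovskikh's link classification to exclude del Pezzo intermediates in the Sarkisov factorization, which is exactly where Proposition~\ref{Prop:DP1DP2rho1} and the non-rationality hypothesis become indispensable.
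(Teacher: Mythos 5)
Your proof is correct and follows essentially the same route as the paper: reduce to minimal conic bundles, factor the birational map into Iskovskikh links, absorb the type~IV links (which live on del Pezzo surfaces of degree $2$ or $4$) via the automorphism provided by Lemma~\ref{Lem:KX4}, handle the equivalence with~(3) through Corollary~\ref{Cor:BirAutI}, and get the \emph{moreover} part by noting that $K_X^2\leq 0$ excludes type~IV links. The only minor difference is that the paper excludes links of type I/III by citing \cite[Theorem~2.6]{IskFact} directly, whereas you exclude $\rho=1$ del Pezzo intermediates via the rigidity of Proposition~\ref{Prop:DP1DP2rho1}; both are valid.
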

  \begin{rem}
  It is well-known that this result is false when $X$ and $Y$ are rational. Indeed, consider $(X,\pi_X)=(\p^1_\r\times\p^1_\r,\pr_1)$ and $(Y,\pi_Y)$ be a real conic bundle with two singular fibres. The surfaces $X$ and $Y$ are birational, but the conic bundles $(X,\pi_X)$ and $(Y,\pi_Y)$ are not. 
  \end{rem}
  \begin{proof}
The equivalence $(\ref{Thm:Birapport3})\Leftrightarrow (\ref{Thm:Birapport2})$ was proved in Corollary~\ref{Cor:BirAutI} and  $(\ref{Thm:Birapport2})\Rightarrow (\ref{Thm:Birapport1})$ is evident.

Let us prove now $(\ref{Thm:Birapport1})\Leftrightarrow (\ref{Thm:Birapport2})$. We may assume that $(X,\pi_X)$ and $(Y,\pi_Y)$ are minimal and that $X$ is not rational, hence $\pi_X$ has at least $4$ singular fibres. Let  $\psi\colon X\dasharrow Y$ a birational map, and decompose $\psi$ into elementary links: $\psi=\psi_n\circ\dots\circ\psi_1$ (see \cite[Theorem 2.5]{IskFact}). Consider $\psi_1 \colon X\dasharrow X_1$ the first link, which may be of type $II$ or $IV$ only by \cite[Theorem 2.6]{IskFact}. If $\psi_1$ is of type $II$, then $\psi_1$ is a birational map of conic bundles  $(X,\pi_X) \dasharrow (X_1,\pi_1)$ for some conic bundle structure $\pi_1\colon X_1\rightarrow \p^1$. If $\psi_1$ is of type $IV$, then $\psi_1$ is an isomorphism $X \to X_1$ and the link is precisely a change of conic bundle structure from $\pi_{X}$ to $\pi_1\colon X_1\rightarrow\p^1$, which induce distinct foliations on $X(\r)$. 
Applying Lemma~\ref{Lem:KX4}, $X$ is a del Pezzo surfaces of degree $2$ or $4$, and there exist automorphisms $\alpha\in \aut(X)$ and $\beta\in \aut(\p^1_\r)$ such that $\pi_1\psi_1\alpha=\beta\pi_2$, whence $(X,\pi)$ is isomorphic to $(X_1,\pi_1)$. We  proceed by induction on the number of elementary links to conclude that $(X,\pi_X)$ is birational to $(Y,\pi_Y)$. Moreover, if $\pi_{X}$ has at least $8$ singular fibres, then no link of type $IV$ may occur, so $\psi$ is a birational map of conic bundles $(X,\pi_{X})\dasharrow (Y,\pi_{Y})$. 
  \end{proof}
 When the conic bundles are minimal, we can strengthen Theorem~\ref{Thm:Birapport} to get an isomorphism between the real parts.
  
  \begin{thm}\label{thm:isom}
    Let  $\pi_X\colon X\rightarrow \p^1_{\r}$ and $\pi_Y\colon Y\rightarrow \p^1_{\r}$ be two \emph{minimal} real conic bundles, and suppose that either $X$ or $Y$ is non-rational. Then, the following are equivalent:
\begin{enumerate}
\item\label{enum.thm.isom.1}
$X$ and $Y$ are birational.
\item\label{enum.thm.isom.2}
 $X(\r)$ and $Y(\r)$  are isomorphic.
\item\label{enum.thm.isom.3}
 $(X(\r),\pi_X)$ and $(Y(\r),\pi_Y)$  are isomorphic.
  \end{enumerate}
  \end{thm}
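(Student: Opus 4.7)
The plan is to observe that the implications $(3)\Rightarrow(2)\Rightarrow(1)$ are essentially formal and that $(1)\Rightarrow(3)$ reduces immediately to machinery already developed, namely Theorem~\ref{Thm:Birapport} and Corollary~\ref{Prp:Equivalences}. Concretely, $(3)\Rightarrow(2)$ is automatic because an isomorphism of conic bundles is in particular an isomorphism of the underlying real parts, and $(2)\Rightarrow(1)$ holds since an isomorphism $X(\r)\to Y(\r)$ is by definition induced by a birational map $X\dasharrow Y$.

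For the substantive implication $(1)\Rightarrow(3)$, I would first invoke the non-rationality hypothesis together with the minimality of $(X,\pi_X)$ (and $(Y,\pi_Y)$) to conclude, via Theorem~\ref{Thm:ClassicMinimal}, that $\pi_X$ has an even number $2r\geq 4$ of singular fibres; in particular both conic bundles have at least one singular fibre, which is exactly the hypothesis required to apply Corollary~\ref{Prp:Equivalences}. Applying Theorem~\ref{Thm:Birapport} to the birational equivalence between $X$ and $Y$, I obtain an automorphism $\beta$ of $\p^1_{\r}$ such that $\beta(I(X,\pi_X))=I(Y,\pi_Y)$.

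Now I would replace $\pi_Y$ by $\pi_Y':=\beta^{-1}\circ\pi_Y$. This does not change the surface $Y$ nor the property of being a minimal real conic bundle, and it produces the equality $I(Y,\pi_Y')=I(X,\pi_X)$. Corollary~\ref{Prp:Equivalences} then yields an isomorphism $\varphi\colon X(\r)\to Y(\r)$ satisfying $\pi_Y'\circ\varphi=\pi_X$, equivalently $\pi_Y\circ\varphi=\beta\circ\pi_X$. In the terminology of Definition~\ref{dfn.birat.conic}, this is precisely an isomorphism between the conic bundles $(X(\r),\pi_X)$ and $(Y(\r),\pi_Y)$, proving $(3)$.

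There is no real obstacle here: the theorem is essentially the combination of Theorem~\ref{Thm:Birapport} (which translates birational equivalence of surfaces into an equality of the intervals $I$ up to an automorphism of~$\p^1_{\r}$) with Corollary~\ref{Prp:Equivalences} (which upgrades equality of the intervals $I$ to an isomorphism of conic bundles between real parts, provided a singular fibre exists). The one point that deserves care is to check that the non-rationality assumption is enough to guarantee a singular fibre; this is exactly what Theorem~\ref{Thm:ClassicMinimal} delivers, since a minimal geometrically rational non-rational surface of the conic bundle type carries at least $4$ singular fibres.
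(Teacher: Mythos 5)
Your proof is correct and follows essentially the same route as the paper: the implications $\ref{enum.thm.isom.3})\Rightarrow\ref{enum.thm.isom.2})\Rightarrow\ref{enum.thm.isom.1})$ are dismissed as formal, and $\ref{enum.thm.isom.1})\Rightarrow\ref{enum.thm.isom.3})$ is obtained by using Theorem~\ref{Thm:Birapport} to carry $I(X,\pi_X)$ onto $I(Y,\pi_Y)$ by an automorphism of $\p^1_{\r}$ and then invoking the fact that the intervals determine the conic bundle structure on the real part (you cite Corollary~\ref{Prp:Equivalences} after twisting $\pi_Y$ by the automorphism, while the paper uses Lemma~\ref{Lem:IsoEx} together with Corollary~\ref{Cor:BirAutI}; this is the same content). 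The one imprecision is your appeal to Theorem~\ref{Thm:ClassicMinimal} to produce singular fibres: that theorem classifies minimal \emph{surfaces}, whereas here only the conic bundles are assumed minimal, so it does not literally apply; the needed fact is nevertheless immediate (and is what the paper asserts), since a conic bundle with no singular fibre is a real form of a Hirzebruch surface, whose real locus is connected or empty, which would force $X$ to be rational under the paper's standing conventions.
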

  \begin{proof}
The implications \ref{enum.thm.isom.3}) $\Rightarrow$ \ref{enum.thm.isom.2}) $\Rightarrow$ \ref{enum.thm.isom.1}) being evident, it suffices to prove  \ref{enum.thm.isom.1}) $\Rightarrow$  \ref{enum.thm.isom.3}).
Since $X$ and $Y$ are not rational, both $\pi_{X}$ and $\pi_{Y}$ have at least one singular fibre. Applying Lemma~\ref{Lem:IsoEx}, we may assume that both $(X,\pi_X)$ and $(Y,\pi_Y)$ are exceptional real conic bundles. Then, since $(X,\pi_X)$ and $(Y,\pi_Y)$ are birational (Theorem~\ref{Thm:Birapport}), we may assume that $I(X,\pi_X)=I(Y,\pi_Y)$, up to an automorphism of $\p^1_\r$. Then Corollary~\ref{Cor:BirAutI} shows that $(X,\pi_X)$ is isomorphic to $(Y,\pi_Y)$.
  \end{proof}

We are now able to prove Theorem~\ref{Thm:Diff+Bir=DiffBir} concerning minimal surfaces. 
\begin{proof}[Proof of Theorem~$\ref{Thm:Diff+Bir=DiffBir}$]
Let  $X$ and $Y$ be two minimal geometrically rational real surfaces, and assume that either $X$ or $Y$ is non-rational. 

If $X(\r)$ and $Y(\r)$  are isomorphic, it is clear that $X$ and $Y$ are birational. Let us prove the converse.

 Theorem~\ref{Thm:ClassicMinimal} lists all the possibilities for $X$. If $\rho(X)=1$ or $\rho(Y)=1$, Proposition~\ref{Prop:DP1DP2rho1} shows that $X$ is isomorphic to $Y$. Otherwise, since neither $X$ nor $Y$ is rational, there exist minimal conic bundle structures on $X$ and on $Y$. From Theorem~\ref{thm:isom}, we conclude that $X(\r)$ is isomorphic to $Y(\r)$.
\end{proof}
To go further with non-minimal surfaces, we need to know when the group  $\aut\bigl(X(\r)\bigr)$ is very transitive for $X$ minimal.  This is done in the next sections.

\section{Very transitive actions}\label{Sec:VeryTransitive}

Thanks to the work done in Section~\ref{Sec:conicbundle}, it is easy to apply the techniques of \cite{hm3} to prove that 
$\aut\bigl(X(\r)\bigr)$ is fiberwise very transitive on a real conic bundle. After describing the transitivity of $\aut\bigl(X(\r)\bigr)$ on the tangent space of a general point, we set the main result of that section: $\aut\bigl(X(\r)\bigr)$ is very transitive on each connected component when  $X$ is minimal and admits two conic bundle structures (Proposition~\ref{Prp:TransivitySurface}). We end the section by giving a characterisation of surfaces $X$ for which $\aut\bigl(X(\r)\bigr)$ is able to mix the connected components of $X(\r)$.

\begin{lem}\label{Lem:TransivityConicBundle}
Let $(X,\pi)$ be a minimal real conic bundle over $\p^1_\r$ with at least one singular fibre. 
Let $(p_1,\dots,p_n)$ and $(q_1,\dots,q_n)$ be two $n$-tuples of distinct points of $X(\r)$, and let $(b_1,\dots,b_m)$ be $m$ points of $I(X,\pi)$. Assume that $\pi(p_i)=\pi(q_i)$ for each $i$, that $\pi(p_i)\ne\pi(p_j)$ for $i\ne j$ and that $\pi(p_i)\not=b_j$ for any $i$ and any $j$. 

Then, there exists $\alpha \in \aut\bigl(X(\r)\bigr)$ such that $\alpha(p_i)=q_i$ for every $i$, $\pi\alpha=\pi$ and $\alpha|_{\pi^{-1}(b_i)}$ is the identity for every $i$.
\end{lem}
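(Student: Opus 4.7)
\emph{Proof plan.} The plan is to reduce to an exceptional affine model $y^2+z^2=Q(x)$ and to construct $\alpha$ as a fibrewise rotation whose angle depends polynomially on the base coordinate~$x$.

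\textbf{Reduction.} First I would apply Lemma~\ref{Lem:IsoEx} to replace $(X,\pi)$ by an exceptional real conic bundle via an isomorphism of real parts intertwining the fibrations (the base, and hence the points $b_j$, is unchanged; the $p_i,q_i$ are transported). Since $\pi$ has at least one singular fibre, $I(X,\pi)$ is a proper closed subset of $\p^1(\r)$, so after precomposing $\pi$ with a suitable automorphism of $\p^1_{\r}$ I may assume that $(1:0)$ lies outside $I(X,\pi)\cup\{\pi(p_1),\dots,\pi(p_n),b_1,\dots,b_m\}$. Lemma~\ref{Lem:AffEx} then identifies $X(\r)$ with the real locus of the affine surface $A\colon y^2+z^2=Q(x)$, where $Q\in\r[x]$ has only simple real roots and $\pi(x,y,z)=(x:1)$. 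Writing $x_i:=\pi(p_i)$, $p_i=(x_i,y_i,z_i)$, $q_i=(x_i,y'_i,z'_i)$, we obtain $y_i^2+z_i^2=(y'_i)^2+(z'_i)^2=Q(x_i)\geq 0$.

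\textbf{The rotation family.} To each polynomial $V\in\r[x]$ I will associate the map
\[
\rho_V\colon(x,y,z)\longmapsto\Bigl(x,\frac{(1-V^2)y-2Vz}{1+V^2},\frac{2Vy+(1-V^2)z}{1+V^2}\Bigr).
\]
Since $1+V(x)^2>0$ on $\r$, the map $\rho_V$ is regular on $A(\r)=X(\r)$, preserves the equation $y^2+z^2-Q(x)$, commutes with $\pi$, and admits $\rho_{-V}$ as its inverse; hence $\rho_V\in\aut\bigl(X(\r),\pi\bigr)$. Geometrically, $\rho_V$ acts on the fibre above $x$ as the rotation of the $(y,z)$-plane by the angle $2\arctan V(x)$.

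\textbf{Interpolation.} For each $i$ with $Q(x_i)>0$, the points $p_i$ and $q_i$ lie on a circle of positive radius in $\pi^{-1}(x_i)$, so there is a unique $\theta_i\in(-\pi,\pi]$ whose rotation sends $p_i$ to $q_i$; I set $v_i:=\tan(\theta_i/4)\in(-1,1]$. For each $i$ with $Q(x_i)=0$, the fibre $\pi^{-1}(x_i)$ contains the unique real point $(x_i,0,0)=p_i=q_i$, which is fixed by every rotation, and I set $v_i:=0$. Since $x_1,\dots,x_n,b_1,\dots,b_m$ are pairwise distinct, Lagrange interpolation yields a polynomial $V\in\r[x]$ with $V(x_i)=v_i$ for every $i$ and $V(b_j)=0$ for every $j$. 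Setting $\alpha:=\rho_V\circ\rho_V\in\aut\bigl(X(\r),\pi\bigr)$, the composition rotates the fibre above $x$ by $4\arctan V(x)$: at $x=x_i$ this equals $\theta_i$, so $\alpha(p_i)=q_i$; at $x=b_j$ this equals $0$, so $\alpha$ is the identity on $\pi^{-1}(b_j)$, as required.

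\textbf{Main obstacle.} The delicate case is the antipodal one, $\theta_i=\pi$: a single rotation $\rho_V$ would force $V(x_i)=\tan(\theta_i/2)=\infty$, which is not realised by a polynomial. The doubling $\alpha=\rho_V\circ\rho_V$ is precisely what bypasses this, because it halves every prescribed angle and keeps the interpolation values $\tan(\theta_i/4)\in(-1,1]$ bounded, while simultaneously preserving the identity condition at each $b_j$; everything else reduces to routine Lagrange interpolation and the explicit affine model supplied by Lemma~\ref{Lem:AffEx}.
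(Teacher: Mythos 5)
Your proof is correct and follows essentially the same route as the paper: reduce to an exceptional conic bundle via Lemma~\ref{Lem:IsoEx}, pass to the affine model $y^2+z^2=Q(x)$ of Lemma~\ref{Lem:AffEx}, and realise $\alpha$ as a twisting map (fibrewise rotation with angle varying algebraically in the base) whose values are prescribed by Lagrange interpolation. The only divergence is a cosmetic one in the interpolation step: the paper covers the possibly antipodal target rotations by choosing the excluded point of the rational chart of $\SO_2(\r)$ away from the rotations $\Phi_1,\dots,\Phi_n$, whereas you keep the standard chart $V\mapsto 2\arctan V$ and square the twisting map so that all required values $\tan(\theta_i/4)$ stay finite; both devices serve the same purpose and the rest of the argument is identical.
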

\begin{rem}
The same result holds for minimal real conic bundles with no singular fibre, see \cite[5.4]{bh}. The following proof uses \emph{twisting maps}, see below, which were introduced in \cite{hm3} to prove that the action of the group of automorphisms $\aut(S^2)$ on the quadric sphere $S^2:=\{(x:y:z)\in\r^3\ |\ x^2+y^2+z^2=1\}$ is  very transitive.
\end{rem}
\begin{proof}
By Lemma~\ref{Lem:IsoEx}, we may assume that $(X,\pi)$ is exceptional. 
Moreover, Lemma~\ref{Lem:AffEx} yields the existence of an affine real surface $A\subset X$ isomorphic to the hypersurface of $\r^3$ given by \[y^2+z^2=-\prod_{i=1}^{2r}(x-a_i),\]
for some $a_1,\dots,a_{2r}\in \r$ with $a_1<a_2<\dots<a_{2r}$, where $\pi|_A$ corresponds to the projection $(x,y,z)\mapsto x$, and where the inclusion $A\subset X$ induces an isomorphism $A(\r)\rightarrow X(\r)$.

 For $i=1,\dots,n$, let us denote by $(x_i,y_i,z_i)$ the coordinates of $p_i$ in $A\subset \r^3$ and by $(u_i,v_i,w_i)$ the ones of $q_i$. 
From hypothesis, we have  $x_i = u_i$ for all $i$, thus we get $y_i^2 + z_i^2= v_i^2 + w_i^2$ for all $i$. Let $\Phi_i\in \SO_2(\r)$ be the rotation sending $(x_i,y_i)$ to $(u_i,v_i)$. Then by \cite[Lemma~2.2]{hm3}, there exists an algebraic map $\Phi
\colon [a_1,a_{2r}] \to \SO_2(\r)$ such that $\Phi(x_i) =
\Phi_i$ for $i=1,\dots,n$ and $\Phi(b_i)$ is the identity for $i=1,\dots,m$. Let us recall the proof; since $ \SO_2(\r)$ is isomorphic to the unit circle
$S^1:=\{(x:y:z)\in\p^2(\r)\ |\ x^2+y^2=z^2\}$, it suffices to prove the statement for $S^1$ instead of
$ \SO_2(\r)$. Let~$\Phi_0$ be a point of~$S^1$ distinct
from~$\Phi_1,\ldots,\Phi_n$ and from the identity.  Since $S^1\setminus \{\Phi_0\}$ is
isomorphic to $\r$, it suffices, finally, to prove
the statement for $\r$ instead of $ \SO_2(\r)$. The
latter statement is an easy consequence of Lagrange polynomial
interpolation.

Then the map defined by $\alpha\colon (x,y,z)\mapsto \bigl(x,(y,z)\cdot \Phi(x)\bigr)$ induces an automorphism $A(\r)\rightarrow A(\r)$ called the \emph{twisting map} of $\pi$ associated to $\Phi$. Moreover, $\alpha(p_i)=q_i$, for all $i$, $\pi\alpha=\pi$, $\alpha|_{\pi^{-1}(b_i)}$ is the identity for every $i$, and $\pi$ induces an automorphism $X(\r)\to X(\r)$.
\end{proof}

\begin{lem}\label{Lem:TransivityTangentConicBundle}
Let $(X,\pi)$ be a minimal real conic bundle over $\p^1_\r$ with at least one singular fibre. Let $p\in X$ be a real point in a nonsingular fibre of $\pi$, and let $\Sigma\subset I(X,\pi)$ be a finite subset, with $\pi(p)\in \Sigma$.
 Denote by $\eta\colon Y\to X$ the blow-up of $p$, and by $E\subset Y$ the exceptional curve. Let $q\in E$ the point corresponding to the direction of the fibre of $\pi$ passing through $p$.

Then, the lift of the group \[G=\Big\{\alpha \in \aut\bigl(X(\r)\bigr), \pi\alpha=\pi\ \Big|\ \alpha |_{\pi^{-1}(\Sigma)} \mbox{ is the identity}\Big\}\]by $\eta$ is a subgroup $\eta^{-1}G\eta\subset \aut\bigl(Y(\r)\bigr)$ which fixes the point $q$, and acts transitively on $E\backslash q\cong \mathbb{A}^1_\r$.
\end{lem}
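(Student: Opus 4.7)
The plan is to reduce to the affine model of Lemma~\ref{Lem:AffEx}, use the twisting maps that appear in the proof of Lemma~\ref{Lem:TransivityConicBundle} to construct many elements of $G$, and then compute their differential at $p$ explicitly on $E$.

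By Lemma~\ref{Lem:IsoEx} I may assume $(X,\pi)$ is exceptional, so that Lemma~\ref{Lem:AffEx} provides an affine chart $A \subset X$ of equation $y^2 + z^2 = Q(x)$ with $\pi|_A(x,y,z) = (x:1)$; after adjusting coordinates on the base I arrange that $p \in A$. Write $p = (x_0, y_0, z_0)$ with $x_0 \in \Sigma$, and note that $Q(x_0) = y_0^2 + z_0^2 > 0$ since $p$ lies in a nonsingular fibre, so in particular $(y_0, z_0) \ne (0,0)$. Every twisting map
\[ \alpha_\Phi \colon (x, y, z) \longmapsto \bigl( x,\, a(x) y - b(x) z,\, b(x) y + a(x) z \bigr), \qquad \Phi = \begin{pmatrix} a & -b \\ b & a \end{pmatrix}\colon \r \to \SO_2(\r) \text{ algebraic}, \]
such that $\Phi|_\Sigma = \mathrm{id}$ belongs to $G$; moreover $\Phi(x_0) = \mathrm{id}$ forces $\alpha_\Phi(p) = p$, so $\alpha_\Phi$ lifts through $\eta$ to $\tilde\alpha_\Phi := \eta^{-1} \alpha_\Phi \eta \in \aut\bigl(Y(\r)\bigr)$.

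Since $\alpha_\Phi$ restricts to the identity on the whole fibre $\pi^{-1}(x_0)$, the strict transform of this fibre in $Y$ is pointwise fixed by $\tilde\alpha_\Phi$; as this strict transform meets $E$ precisely at $q$, one obtains $\tilde\alpha_\Phi(q) = q$. Using $a(x_0) = 1$, $b(x_0) = 0$ and the relation $a'(x_0) = 0$ forced by $a^2 + b^2 \equiv 1$, the differential at $p$ is
\[ d\alpha_\Phi|_p \colon (dx, dy, dz) \longmapsto \bigl(dx,\, dy - B z_0\, dx,\, dz + B y_0\, dx \bigr), \qquad B := b'(x_0), \]
which acts as the identity on the fibre tangent direction $(0, z_0, -y_0)$ and satisfies $d\alpha_\Phi|_p(v) - v = -B \cdot (0, z_0, -y_0)$ for every tangent vector $v$ transverse to the fibre and normalised so that its $dx$-component equals $1$. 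Writing every such transverse direction in the form $v + \lambda \cdot (0, z_0, -y_0)$ with $\lambda \in \r$, and using $\lambda$ as affine coordinate on $E \setminus \{q\} \cong \mathbb{A}^1_\r$, the action of $\tilde\alpha_\Phi$ on $E \setminus \{q\}$ is the translation $\lambda \mapsto \lambda - B$.

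It remains to show that $B$ takes every real value as $\Phi$ varies with $\Phi|_\Sigma = \mathrm{id}$. I would parametrise $\SO_2(\r)$ by stereographic projection and let $\Phi$ correspond to
\[ \tau(x) = c (x - x_0) \prod_{\sigma \in \Sigma \setminus \{x_0\}} (x - \sigma), \qquad c \in \r. \]
Then $\Phi|_\Sigma = \mathrm{id}$ by construction, $\Phi$ is algebraic, and a direct computation yields $B = 2 \tau'(x_0) = 2 c \prod_{\sigma \in \Sigma \setminus \{x_0\}} (x_0 - \sigma)$, which sweeps out all of $\r$ as $c$ does. The hard part is the differential computation in the third paragraph; once the explicit translation $\lambda \mapsto \lambda - B$ is in hand, the realisation of every real value of $B$ is just a polynomial interpolation, entirely parallel to the one used in the proof of Lemma~\ref{Lem:TransivityConicBundle}.
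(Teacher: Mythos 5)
Your proof is correct and follows essentially the same route as the paper: reduce to the exceptional affine model $y^2+z^2=Q(x)$ via Lemmas~\ref{Lem:IsoEx} and~\ref{Lem:AffEx}, use twisting maps built from the rational parametrisation of $\SO_2(\r)$ vanishing on $\Sigma$, and check that their induced action on the projectivised tangent space at $p$ fixes the fibre direction and is a translation with arbitrary parameter on the complement. The only difference is cosmetic: the paper normalises $p=(0,0,1)$ and computes in the cotangent space $\m/\m^2$, whereas you compute the Jacobian at a general $(x_0,y_0,z_0)$ and read off the translation $\lambda\mapsto\lambda-B$ directly, which is an equivalent calculation.
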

\begin{proof}Since $G$ acts identically on $\pi^{-1}(\Sigma)$, it fixes $p$, and therefore lifts to $H=\eta^{-1} G\eta \subset \aut(Y(\r),\pi\eta)$, which preserves $E$. Moreover, $G$ preserves the fibre of $\pi$ passing through $p$, so $H$ preserves its strict transform, which intersects transversally $E$ at $q$, so $q$ is fixed.

Let us prove now that the action of $\eta^{-1}G\eta$ on $E\backslash q$ is transitive. 
By Lemma~\ref{Lem:IsoEx}, we may assume that $(X,\pi)$ is exceptional. 
Then, we take an affine surface $A\subset X$, isomorphic to the hypersurface $y^2+z^2=P(x)$ of $\r^{3}$ for some polynomial $P$, such that $A|_{\pi}$ is the projection $\pr_{x}\colon(x,y,z)\mapsto x$ and the inclusion $A\subset X$ gives an isomorphism $A(\r)\to X(\r)$ (Lemma~\ref{Lem:AffEx}). 
Let us write $(x_{0},y_{0},z_{0})\in \r^{3}$ the coordinates of $p$. Since $x$ is on a nonsingular fibre of $\pi$, then $P(x_{0})>0$. Up to an affine automorphism of $\r^{3}$, and up to multiplication of $P$ by some constant, we may assume that $x_{0}=0$, $P(0)=1$, $y_{0}=0$, and $z_{0}=0$.

To any real polynomial $\lambda\in \r[X]$, we associate the matrix 
$$
\left(\begin{array}{cc}\alpha(X)& \beta(X)\\ -\beta(X) & \alpha(X)\end{array}\right)\in \SO_{2}(\r(X))\;,
$$
where $\alpha=\frac{1-\lambda^{2}}{1+\lambda^{2}}\in\r(X)$ and $\beta=\frac{2\lambda}{1+\lambda^2}\in\r(X)$. And corresponding to this matrix, we associate the map 
\[
\psi_{\lambda}\colon (x,y,z)\mapsto (x,\alpha(x)\cdot y-\beta(x)\cdot z,\beta(x)\cdot y+\alpha(x)\cdot z),
\]
which belongs to $\aut(A(\r),\pr_{x})$. 
To impose that $\psi_{\lambda}$ is the identity on $(\pr_{x})^{-1}(\Sigma)$ is the same to ask that $\lambda(x)=0$ for each $(x:1)\in \Sigma\subset \p^1(\r)$, and in particular for $x=0$. 

Denote by $\mathcal{O}=\r[x,y,z]/(y^2+z^2-P(x))$ the ring of functions of $A$, by $\mathfrak{p}\subset \mathcal{O}$ the ideal of functions vanishing at $p$, by $\mathcal{O}_{\mathfrak{p}}$ the localisation, and by $\m \subset \mathcal{O}_{\mathfrak{p}}$ the maximal ideal of $\mathcal{O}_{\mathfrak{p}}$. 
Then,  the cotangent ring $T_{p,A}^{*}$ of $p$ in $A$ is equal to $\m/\m^2$, and is generated by the images $[x]$, $[y]$, $[z-1]$ of $x,y,z-1\in \r[x,y,z]$. Since $P(0)=1$, we may write $P(x)=1+xQ(x)$, for some real polynomial $Q$. 
We compute 
\[
[0]=[y^2+z^2-P(x)]=[y^2+(z-1)^2+2(z-1)-xQ(x)]=[2(z-1)-xQ(0)]\in\m/\m^2\;.
\]  

We see that $[z-1]=[xQ(0)/2]$, thus $\m/\m^2$ is generated by $[x]$ and $[y]$ as a $\r$-module. Since $\lambda(0)=0$, we can write $\lambda(x)=x\mu(x)$, for some real polynomial $\mu$. The linear action of $\psi_{\lambda}$ on the cotangent space $T_{p,A}^{*}$ fixes $[x]$  and sends $[y]$ onto 
\[\begin{array}{rcl}
\left[\alpha(x)\cdot y-\beta(x)\cdot z\right]&=&\left[\frac{(1-\lambda(x)^2)y-2\lambda(x)z}{\lambda(x)^2+1}\right]=\left[y-2\lambda(x)(1+xQ(0)/2)\right]\\
&=&\left[y-2\mu(0)x\right]\;.\end{array}
\]
It suffices to change the derivative of $\lambda$ at $0$ (which is equal to $\mu(0)$), which may be any real number. Therefore, the action of $G$ on the projectivisation of $T_{p,A}^{*}$, fixes a point (corresponding to $[x]$) but acts transitively on the complement of this point. Since $E$ corresponds to the projectivisation of $T_{p,A}$,  $G$ acts transitively on $E\backslash q$.\end{proof}

\begin{lem}\label{no-two-singular}
Let $X$ be a real projective surface endowed with two minimal conic bundles  $\pi_1\colon X\to \p^1_\r$ and $\pi_2\colon X\to \p^1_\r$  inducing distinct foliations on $X(\c)$. 
There exists a real projective surface ${X'}$ such that ${X'}(\r)$ and $X(\r)$ are isomorphic, $X$ is endowed with two minimal conic bundles  $\pi_1'\colon X\to \p^1_\r$ and $\pi_2'\colon X\to \p^1_\r$  inducing distinct foliations on $X'(\c)$ 
 and the following condition holds:

$(\star)$ Let $F_j$ be a real fibre of $\pi_j'$, $j=1,2$. If $F_1(\r)\cap F_2(\r) \ne \emptyset$, then at most one of the curves $F_j$ can be singular. 
\end{lem}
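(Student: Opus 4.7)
The plan is to reduce to three cases using Lemma~\ref{Lem:KX4}: the hypothesis on $X$ forces it to be either $\p^1_\r\times\p^1_\r$, a del Pezzo surface of degree~$4$, or a del Pezzo surface of degree~$2$. In the first case, $\pi_1$ and $\pi_2$ have no singular fibres, so $(\star)$ is vacuous and $X'=X$ works. When $X$ is a del Pezzo of degree~$4$, I would again take $X'=X$ and verify $(\star)$ directly by intersection theory. Suppose two real singular fibres $F_j=E_j+\bar E_j$ of $\pi_j$ (with $E_j$ an imaginary $(-1)$-curve) share a real point; since $F_j(\r)$ reduces to the node $E_j\cap\bar E_j$, these two nodes coincide at a common real $p$, so all four components pass through $p$. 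By Lemma~\ref{Lem:KX4}(c) with $c=1$, one has $f_1+f_2=-K_X$; using $E_1\cdot K_X=-1$ and $E_1\cdot f_1=0$ this yields $E_1\cdot f_2=1$. The four components are pairwise distinct, because a common component $E$ would give $E\cdot(f_1+f_2)=0\neq 1=E\cdot(-K_X)$. Consequently $E_1\cdot E_2\ge 1$ and $E_1\cdot\bar E_2\ge 1$, whence $E_1\cdot f_2=E_1\cdot(E_2+\bar E_2)\ge 2$, a contradiction.

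For $X$ a del Pezzo of degree~$2$, the analogous computation yields $c=2$ and $E_1\cdot f_2=2$, which is compatible with $E_1\cdot E_2=E_1\cdot\bar E_2=1$, so $(\star)$ may genuinely fail on $X$ itself. I would construct a different $X'$ by an elementary transformation: pick a pair $q,\bar q$ of conjugate imaginary points lying on a pair of conjugate imaginary non-singular fibres of $\pi_1$, blow them up and then contract the strict transforms of those two fibres. Since only imaginary curves are involved, the resulting birational map $X\dasharrow X'$ induces an isomorphism $X(\r)\to X'(\r)$. The surface $X'$ is again a del Pezzo of degree~$2$ with two minimal conic bundle structures $\pi_1',\pi_2'$, and $\pi_1'$ corresponds to $\pi_1$ via this isomorphism. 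Since $X'$ is not isomorphic to $X$ as a complex surface for a generic choice of $q$, its Geiser involution differs from the one inherited from $X$, so the six singular real points of $\pi_2'$ lie at new positions on $X'(\r)\cong X(\r)$. A sufficiently generic choice of $(q,\bar q)$, possibly iterated, then separates them from the singular points of $\pi_1'$, establishing~$(\star)$.

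The main obstacle is the degree~$2$ case: one must rigorously verify that suitable elementary transformations actually move the singular points of the second conic bundle off those of the first, and that a finite number of such transformations suffices to establish~$(\star)$ everywhere. This ultimately rests on a genericity argument relying on an explicit description of how the Geiser involution transforms under elementary transformations and on a dimension count showing that the locus of ``bad'' choices $(q,\bar q)$ forms a proper algebraic subset of the parameter space of admissible pairs.
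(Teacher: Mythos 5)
Your reduction via Lemma~\ref{Lem:KX4} and your treatment of the easy cases are fine: for $\p^1_\r\times\p^1_\r$ the condition $(\star)$ is vacuous, and your intersection computation in degree $4$ is correct (the paper gets the same conclusion in one stroke for both degrees: if $(\star)$ fails, all four components pass through the common node, so $F_1\cdot F_2\ge 4$, and $(F_1+F_2)^2=16/(K_X)^2\ge 8$ forces $(K_X)^2=2$). So the entire content of the lemma is the degree-$2$ case, and there what you offer is a plan rather than a proof, as you yourself acknowledge.

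Two essential claims in that plan are left unverified. First, after the elementary transformation centred at the conjugate pair $q,\bar q$ you assert that $X'$ is again a del Pezzo surface of degree $2$. This is not automatic: $X'$ is a minimal real conic bundle with $(K_{X'})^2=2$, but such a surface need not have ample anticanonical class (it may acquire a $(-2)$-curve), and by Lemma~\ref{Lem:KX4} the del Pezzo property is exactly what is needed for the second fibration $\pi_2'$ to exist at all; so this must be part of the genericity statement you prove, not a starting point. Second, and more seriously, the key assertion -- that for suitable $q$ the six real nodes of the singular fibres of $\pi_2'$, i.e.\ the images of $p_1,\dots,p_6$ under the Geiser involution of $X'$, avoid $\{p_1,\dots,p_6\}$ -- is precisely what you defer: you give no criterion showing the bad set of parameters $q$ is a proper closed subset (and since $q$ must remain imaginary, properness of the bad locus is really what you need, the imaginary points being Zariski dense). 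The paper's proof makes this genericity effective by a different construction: it performs the elementary transformation at a general \emph{real} point $q$, takes $\pi_2'$ to be $\pi_1'$ composed with the Geiser involution, and shows that failure of $(\star)$ forces $q$ to lie on the ramification locus of one of finitely many explicit double covers, attached to the $2$-dimensional linear systems of curves in $|-K_X+f-C_i|$ passing through $p_j$ (with $C_i$ a component of a singular fibre and $p_j$ a node); a general real $q$ therefore works, the isomorphism $X'(\r)\cong X(\r)$ being recovered from Corollary~\ref{Prp:Equivalences} since the intervals are unchanged. If you wish to salvage your imaginary-centre variant (whose one advantage is that $X(\r)\to X'(\r)$ is an isomorphism on the nose), you would have to run an analogous explicit computation with the linear system $|-K_X+2f|$ with assigned double points at $q$ and $\bar q$, which represents $-K_{X'}$ on $X$; as written, the degree-$2$ case is a genuine gap.
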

\begin{rem}
It is possible that the condition $(\star)$ above does not hold for $(X, \pi_1,\pi_2)$, taking for example for ${X}$ the del Pezzo surface of degree $2$ given in the proof of Lemma~\ref{lem:minimaldp} for $k=3$:
$$
X := \bigl\{\left((x:y:z),(a:b)\right)\in\mathbb{P}^2_\r\times\p^1_\r\ |\ x^2m_1(a,b)+y^2m_2(a,b)+z^2m_3(a,b)=0\bigr\}\;.
$$
The map $\pi_1\colon {X}\to \p^1_\r$ is given by the second projection, and the $6$ singular points of its singular fibres correspond to only three points of $\p^2_\r$, namely $(1:0:0)$, $(0:1:0)$ and $(0:0:1)$. This shows that the Geiser involution preserves the set of the $6$ points, so each of these points is the singular point of a singular fibre of $\pi_2$.
\end{rem}
\begin{proof}Suppose that the condition $(\star)$ does not hold for $(X,\pi_1,\pi_2)$ (otherwise, the result is obvious). Then $F_i$ is the union of two $(-1)$-curves $E_{i,1}$ and $E_{i,2}$, intersecting transversally at some point $p_i$. Since $p_i$ is the only real point of $F_i$, we have $p_1=p_2$. Hence, $F_1\cdot E_{2,i}\geq 2$ for $i=1,2$, which implies that $F_1\cdot F_2\geq 4$. According to Lemma~\ref{Lem:KX4}, $X$ is a del Pezzo surface of degree $2$ or $4$, and we have $F_1+F_2=-cK_{X}$ with $c=4/(K_{X})^2$. Computing $16/(K_{X})^2=(F_1+F_2)^2=2F_1\cdot F_2\geq 8$, we see that $(K_{X})^2=2$.

Let $q\in X(\r)$ be a real point, let $\eta\colon Y\to X$ be the blow-up of $q$, and let $\epsilon\colon Y\to {X'}$ be the contraction of the strict transform of the fibre of $\pi_1$ passing through $q$, and write $\psi\colon X\dasharrow {X'}$ the composition $\psi=\epsilon\circ \eta^{-1}$. We prove now that if $q$ is general enough, then ${X'}$ is a del Pezzo surface of degree $2$, and  $\pi_1'=\pi_1\circ \psi$ and $\pi_2'=\sigma_{X'}\circ \pi_1'$ (where $\sigma_{X'}\in \aut({X'})$ is the Geiser involution of ${X'}$) satisfy the condition $(\star)$.

Firstly, it is well-known that blowing-up a general point of a del Pezzo surface of degree $2$ yields a del Pezzo surface of degree $1$ (it suffices that $q$ does not belong to any of the $(-1)$-curves of $X$ and to the ramification curve of the double covering $X\to \p^2$); then a contraction from a del Pezzo surface of degree $1$ yields a del Pezzo surface of degree $2$.

Secondly, we denote respectively by $S,S',T$ the complex surfaces  obtained by forgetting the real structures of $X,X',Y$ and study condition $(\star)$ by working now in the Picard groups of these surfaces, identifying a curve with its equivalence class. We choose a $(-1)$-curve (not defined over~$\r$) in any of the six singular fibres of $\pi_1'$, and denote these by $C_1,\dots,C_6$, and denote by $p_1,\dots,p_6$ the singular points of the six singular fibres, so that $p_i\in C_i$. Condition $(\star)$ amounts to prove that $D_i:=\psi^{-1}\sigma_{X'}\psi(C_i)\subset S$ does not pass through $p_j$ for any $i$ and any $j$. Fixing $i$ and $j$, we will see that this yields a curve of $X$ where $q$ should not lie.  Note that the action of the Geiser involution $\sigma_{X'}\in \aut({X'})\subset\aut(S')$ on $\Pic(S')$ is given $\sigma_{X'}(D)=(D\cdot K_{X'})K_{X'}-D$ (follows directly from the fact that the invariant part of $\Pic(S')$ has rank $1$). In consequence, the $(-1)$-curve $D_i':=\sigma_{X'}\psi(C_i)\subset S'$ is equal to 
 $-K_{X'}-\psi(C_i)$, and thus $\epsilon^{*}(D_i)=-\epsilon^{*}(K_{X'})-\eta^{*}(C_i)$. Writing $E_q$ the $(-1)$-curve contracted by $\eta$, and $f$ a general fibre of $\pi_1$, the $(-1)$-curve contracted by $\epsilon$ is equivalent to $\eta^{*}(f)-E_q$. We have $K_Y=\eta^{*}(K_{X})+E_q=\epsilon^{*}(K_{X'})+\eta^{*}(f)-E_q$ in $\Pic(Y)$. This implies that 
$$\eta^{*}(D_i)=\epsilon^{*}(D_i')=-\eta^{*}(K_{X})+\eta^{*}(f)-\eta^{*}(C_i)-2E_q\in\Pic(Y).$$

This means that $D_i$ is a curve with a double point at $q$, is equivalent to $-K_{X}+f-C_i\in\Pic(S)$ and has self-intersection $3$. Moreover, the linear system $\Lambda_i$ of curves in $\Pic(S)$ equivalent to $-K_{X}+f-C_i$  has dimension $3$. Note that $\Lambda_i$ does not depend on $q$, but only on $i$. Denote by $\Lambda_{i,j}\subset \Lambda_i$ the sublinear system of curves of $\Lambda_i$ passing through $p_j$. This system has dimension $2$; after blowing-up $p_j$, the system $\Lambda_{i,j}$  yields  a ramified double covering of $\p^2$. If $D_i$ passes through $p_j$, then $D_i$ corresponds to a member of $\Lambda_{i,j}$, singular at $q$ and this implies that $q$ belongs to the ramified locus of the double covering induced by $\Lambda_{i,j}$. It suffices to choose $q$ outside of all these locus to obtain condition $(\star)$. 
\end{proof}
We now use the above lemmas to show that the action of $\aut\bigl(X(\r)\bigr)$ is very transitive on each connected component when $X$ is a surface with two conic bundles.

\begin{prop}\label{Prp:TransivitySurface}
Let $X$ be a real projective surface, which admits two minimal conic bundles $\pi_1\colon X\to \p^1_\r$ and $\pi_2\colon X\to \p^1_\r$ inducing distinct foliations on $X(\c)$. 

Let $(p_1,\dots,p_n)$ and $(q_1,\dots,q_n)$ be two $n$-tuples of distinct points of $X(\r)$ such that $p_i$ and $q_i$ belong to the same connected component for each $i$. Then, there exists an element of $\aut\bigl(X(\r)\bigr)$ which sends $p_i$ on $q_i$ for each $i$, and which sends each connected component of $X(\r)$ on itself.
\end{prop}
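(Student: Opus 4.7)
The strategy is to combine fibrewise twists along $\pi_1$ and $\pi_2$, exploiting the transversality of the two induced foliations on $X(\r)$, to obtain enough automorphisms fixing any prescribed finite set of points. By Lemma~\ref{no-two-singular} I may assume that $(X, \pi_1, \pi_2)$ satisfies condition $(\star)$, so that no real point is simultaneously singular for both bundles. Each twist produced by Lemma~\ref{Lem:TransivityConicBundle} satisfies $\pi_j \circ \alpha = \pi_j$; it therefore preserves every connected component of $X(\r)$, since in the minimal case these components are precisely the preimages by $\pi_j$ of the intervals of $I(X, \pi_j)$ (via Lemmas~\ref{Lem:IsoEx} and~\ref{Lem:AffEx}).

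I proceed by induction on $n$. Using the inductive hypothesis for $n-1$, the statement reduces to the following claim: for any two points $p, q$ in the same connected component $M$ of $X(\r)$ and any finite subset $F \subset X(\r) \setminus \{p\}$, there is $\alpha \in \aut\bigl(X(\r)\bigr)$ preserving each component, with $\alpha(p) = q$ and $\alpha|_F = \mathrm{id}$. To prove the claim I would apply Lemma~\ref{Lem:TransivityConicBundle} with forbidden fibres $b_i = \pi_j(f_i)$ for $f_i \in F$: for each $r \in M$ on a smooth $\pi_j$-fibre with $\pi_j(r) \notin \pi_j(F)$, any other point of that $\pi_j$-fibre is reachable from $r$ by a twist fixing $F$. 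The foliations $\mathcal{F}_j$ induced by $\pi_j$ are distinct on $X(\c)$, hence transverse outside a proper algebraic subset, and condition $(\star)$ ensures that at every real point of $M$ at least one $\mathcal{F}_j$ is smooth. Therefore $p$ and $q$ can be joined in $M$ by a finite zig-zag of $\mathcal{F}_1$- and $\mathcal{F}_2$-fibre arcs; after a small perturbation the zig-zag avoids the finitely many forbidden fibres $\pi_1^{-1}(\pi_1(F)) \cup \pi_2^{-1}(\pi_2(F))$, and each arc is realised by a twist fixing $F$. Composing these twists yields $\alpha$.

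The main obstacle is the case where the initial point $p$ (or the final point $q$) already lies in $\pi_1^{-1}(\pi_1(F)) \cap \pi_2^{-1}(\pi_2(F))$: no twist fixing $F$ moves such a point, since a non-trivial rotation on a smooth $\pi_j$-fibre admits no fixed point. This bad locus is finite, being the intersection of finitely many pairs of fibres, and one expects to handle it by a preliminary step using a global automorphism of $X$ provided by Lemma~\ref{Lem:KX4} (for instance the Geiser involution on a del Pezzo surface of degree~$2$, or the automorphism exchanging the two rulings on $\p^1_\r\times\p^1_\r$), which does not preserve the foliations and can be exploited to replace $(p, F)$ by a conjugate configuration whose starting point lies outside the bad locus before running the zig-zag argument.
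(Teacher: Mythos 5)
Your plan follows the paper's strategy (reduce to condition $(\star)$ via Lemma~\ref{no-two-singular}, then compose fibrewise twists from Lemma~\ref{Lem:TransivityConicBundle} along the two fibrations), but two of its steps are genuinely gapped. First, the proposed cure for your ``main obstacle'' does not work. The automorphisms supplied by Lemma~\ref{Lem:KX4} (the Geiser involution, or the swap of the rulings of $\p^1_\r\times\p^1_\r$) \emph{exchange} the two fibrations: one has $\pi_1\sigma=\beta\pi_2$ and $\pi_2\sigma=\beta'\pi_1$ for suitable $\beta,\beta'\in\aut(\p^1_\r)$. Hence $p\in\pi_1^{-1}\bigl(\pi_1(F)\bigr)\cap\pi_2^{-1}\bigl(\pi_2(F)\bigr)$ if and only if $\sigma(p)$ lies in the analogous bad locus for $\sigma(F)$: the bad condition is symmetric in $\pi_1,\pi_2$ and is therefore invariant under conjugation by a fibration-swapping automorphism, so the conjugate configuration is just as stuck. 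The paper sidesteps this entirely by not freezing the other marked points from the outset: it first applies twists to the \emph{whole} configuration ($p_1,\dots,p_n,q_1,\dots,q_n$ simultaneously) so that all marked points lie on smooth fibres of both fibrations and have pairwise distinct $\pi_1$- and $\pi_2$-values, and only then moves the points one at a time, the twists being taken to be the identity on the fibres through the remaining points. (Inside your inductive scheme one could instead conjugate by a \emph{generic twist}, which preserves $\pi_1$-fibre incidences but destroys $\pi_2$-ones; but the Geiser conjugation as proposed achieves nothing.)

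Second, the sentence ``therefore $p$ and $q$ can be joined in $M$ by a finite zig-zag of fibre arcs'' is precisely the heart of the proposition, and transversality of the foliations on a dense open set does not by itself prove it: what is needed is that the equivalence relation generated by ``lying on the same real fibre of $\pi_1$ or of $\pi_2$'' has a single class in each connected component of $X(\r)$, and a dense transversality locus does not formally yield this (nor can the perturbation of the zig-zag move its endpoints, which is again the issue above). This is where the paper's proof does its real work: it studies $\doublecov=\pi_1\times\pi_2\colon X\to\p^1_\r\times\p^1_\r$, observes that on each connected component $W_i$ of $\doublecov\bigl(X(\r)\bigr)$ every horizontal or vertical slice is a point or a bounded interval, produces a path from $\doublecov(p_i)$ to $\doublecov(q_i)$ made of horizontal and vertical segments avoiding the forbidden lines, and lifts it using the fact that the real fibres are connected circles on which the twists act transitively. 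Your plan needs some argument of this kind (for instance, the image argument above, or a proof that the reachability classes are open, using that every smooth real fibre circle contains points at which the two fibrations are transverse); as written, the key connectivity statement is asserted rather than proved.
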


\begin{proof}

When $X$ is rational, the result follows from \cite[Theorem~1.4]{hm3}. Thus we assume that $X$ is non-rational, and in particular that $X(\r)$ is non-connected.

From Lemma~\ref{no-two-singular}, we can assume that any real point which is critical for one fibration is not critical for the second fibration. Otherwise speaking (recall that the fibrations are minimal) a real intersection point of a fibre $F_1$ of $\pi_1$ with a fibre $F_2$ of $\pi_2$ cannot be a singular point of $F_1$ and of $F_2$ at the same time. By Lemma~\ref{Lem:TransivityConicBundle} applied to $(X,\pi_1)$, and to $(X,\pi_2)$, we may assume without loss of generality that all 
points $p_1,\dots,p_n,q_1,\dots,q_n$ belong to smooth fibres of $\pi_1$ and to smooth fibres of $\pi_2$. 
We now use Lemma~\ref{Lem:TransivityConicBundle} to obtain an automorphism $\alpha$ of $(X(\r),\pi_1)$ such that $\pi_2(\alpha(p_i))\ne \pi_2(\alpha(p_j))$ and $\pi_2(\alpha(q_i))\ne \pi_2(\alpha(q_j))$ for $i\ne j$.
Hence, we may suppose that $\pi_2(p_i)\ne \pi_2(p_j)$ and $\pi_2(q_i)\ne \pi_2(q_j)$ for $i\ne j$.

Likewise, using an automorphism of $(X(\r),\pi_2)$ we may suppose that $\pi_1(p_i)\ne \pi_1(p_j)$ and $\pi_1(q_i)\ne \pi_1(q_j)$ for $i\ne j$.

We now show that for $i=1,\dots,m$, there exists an element $\alpha_i\in \aut\bigl(X(\r)\bigr)$ that sends $p_i$ on $q_i$ and that restricts to the identity on the sets $\cup_{j\not= i}\{p_j\}$ and $\cup_{j\not= i}\{q_j\}$. 
Then, the composition of the $\alpha_i$ will achieve the proof. Observe that $\doublecov=\pi_1\times \pi_2$ gives a finite surjective morphism $X\rightarrow \p^1_\r\times\p^1_\r$ which is $2$-to-$1$ or $4$-to-$1$ depending of the degree of $X$ (follows from assertion $(c)$ of Lemma~\ref{Lem:KX4}). 
Denote by $W$ the image of $X(\r)$. The map $X(\r)\to W$ is a differential map, which has topological finite degree. 
Denote by $W_i$ the connected component of $W$ which contains both $\doublecov(p_i)$ and $\doublecov(q_i)$. 
Observe that $W_{i}$ is contained in the square $I(X,\pi_{1})\times I(X,\pi_{2})$, and that for each point $x\in W_{i}$, the intersection of the horizontal and vertical lines (fibres of the two projections of $\p^1_{\r}\times\p^1_{\r}$) passing through $x$ with $W_{i}$ is either only  $\{x\}$, when $x$ is on the boundary of $W_{i}$, or is a bounded interval. Moreover, $W_{i}$ is connected. 
Then, there exists a path from $\doublecov(p_i)$ to $\doublecov(q_i)$ which is a sequence of vertical or horizontal segments contained in $W_i$. We may furthermore assume that none of the segments is contained in $(\pr_1)^{-1} (\pi_1(a))$ or $(\pr_2)^{-1} (\pi_2(a))$ for any $a\in (\cup_{j\not= i}\{p_j\})\cup( \cup_{j\not= i}\{q_j\})$. 
Denote by $r_1,...,r_l$ the points of $U$ that are sent on the singular points or ending points of the path, and by $s_1,\dots,s_l$ some points of $X(\r)$ which are sent by $\doublecov$ on $r_1,\dots,r_l$ respectively. Up to renumbering, $s_1=p_i, s_l=q_i$ and two consecutive points $s_j$ and $s_{j+1}$ are such that $\pi_1(s_j)=\pi_1(s_{j+1})$ or $\pi_2(s_j)=\pi_2(s_{j+1})$. We construct then $\alpha_i$ as a composition of $l-1$ maps, each one belonging either to $\aut(X(\r),\pi_1)$ or $\aut(X(\r),\pi_2)$ and sending $s_j$ on $s_{j+1}$, and fixing the points $(\cup_{j\not= i}\{p_j\})\cup( \cup_{j\not= i}\{q_j\})$. 
\end{proof}

The following proposition describes the possible mixes of connected components.

\begin{prop}\label{Prp:ExchangeComp}
Let $(X,\pi)$ be a minimal real conic bundle. Denote by $I_{1},\dots,I_{r}$ the $r$ connected components of $I(X,\pi)$, and by $M_{1},\dots,M_{r}$ the $r$ connected components of $X(\r)$, where $I_{i}=\pi(M_{i})$, $M_{i}=\pi^{-1}(I_{i})\cap X(\r)$. If $\nu\in \Sym_{r}$ is a permutation of $\{1,\dots,r\}$, the following are equivalent:
\begin{enumerate}
\item\label{Prp:ExchangeComp1}
there exists $\alpha\in \aut(\p^1_{\r})$ such that $\alpha(I_{i})=I_{\nu(i)}$ for each $i$;
\item\label{Prp:ExchangeComp2}
there exists $\beta\in \aut(X(\r),\pi)$ such that $\beta(M_{i})=M_{\nu(i)}$ for each $i$;
\item\label{Prp:ExchangeComp3}
there exists $\beta\in \aut\bigl(X(\r)\bigr)$ such that $\beta(M_{i})=M_{\nu(i)}$ for each $i$;
\item\label{Prp:ExchangeComp4}
there exist two real Zariski open sets $V,W\subset X$, and $\beta\in \bir(X)$, inducing an isomorphism $V\to W$,  such that $\beta(V(\r)\cap M_{i})=W(\r)\cap M_{\nu(i)}$ for each $i$.
\end{enumerate}
Moreover, the conditions are always satisfied when $r\leq 2$, and are in general not satisfied when $r\geq 3$.
\end{prop}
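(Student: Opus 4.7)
The plan is to prove the equivalences cyclically $(1)\Rightarrow(2)\Rightarrow(3)\Rightarrow(4)\Rightarrow(1)$, then handle the ``always/not always'' claim separately. The two middle implications are immediate from the inclusion chain $\aut(X(\r),\pi)\subset\aut(X(\r))\subset\bir(X)$ (take $V=W=X$ in~(4)). For $(1)\Rightarrow(2)$, I start from $\alpha\in\aut(\p^1_\r)$ permuting the intervals via $\nu$, form the new minimal conic bundle $\alpha\circ\pi$ on $X$, and observe $I(X,\alpha\pi)=I(X,\pi)$ as subsets of $\p^1(\r)$. Applying Corollary~\ref{Prp:Equivalences} to $(X,\pi)$ and $(X,\alpha\pi)$ produces an isomorphism $\varphi\colon X(\r)\to X(\r)$ with $\alpha\pi\varphi=\pi$. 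Its inverse $\beta:=\varphi^{-1}$ satisfies $\pi\beta=\alpha\pi$, hence lies in $\aut(X(\r),\pi)$; tracking a point $p\in M_i$ gives $\beta(M_i)=M_{\nu(i)}$ as required.

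The real content lies in $(4)\Rightarrow(1)$, which I would treat by cases on $r$. For $r\leq 2$ the implication will follow from the final ``always satisfied'' assertion (proved below), so nothing need be said there. For $r\geq 4$, the number of singular fibres of $\pi$ is at least $8$, and the last statement of Theorem~\ref{Thm:Birapport} yields $\bir(X)=\bir(X,\pi)$; hence $\beta$ itself lies in $\bir(X,\pi)$, producing $\alpha\in\aut(\p^1_\r)$ with $\pi\beta=\alpha\pi$, and $\alpha(I_i)=I_{\nu(i)}$ then follows by tracking any real point in $M_i$. The remaining case $r=3$ is the main obstacle: here $X$ is a del Pezzo surface of degree~$2$ carrying (by Lemma~\ref{Lem:KX4}) a second conic bundle $\pi_2=\pi\circ\sigma_X$, where $\sigma_X$ is the Geiser involution, and the inclusion $\bir(X,\pi)\subsetneq\bir(X)$ is strict. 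I would decompose $\beta$ into elementary links of types~II and~IV following \cite[Theorem~2.5]{IskFact}: type~II links preserve the conic bundle and contribute to the sought $\alpha$, whereas type~IV links correspond, up to identification, to $\sigma_X$. The key geometric observation is that $\sigma_X$ preserves each connected component of $X(\r)$, because $X(\r)$ is the topological double cover of the three ``interior'' regions of $\p^2(\r)\setminus Q(\r)$ (where $Q$ is the real ramification quartic of the anticanonical morphism $X\to\p^2$), and $\sigma_X$ is exactly the sheet-involution on each region, fixing the boundary. Thus the type~IV links contribute nothing to $\nu$, and the $\alpha$ assembled from the type~II links realises the desired permutation of intervals.

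For the concluding assertions, when $r\leq 2$ the only nontrivial subcase is $r=2$ with $\nu=(1\,2)$; here I would construct an explicit M\"obius transformation exchanging the two disjoint intervals by prescribing its action on three of the four endpoints, invoking the three-transitivity of $\operatorname{PGL}_2(\r)$ on $\p^1(\r)$, and then verifying via a direct cross-ratio computation that the fourth endpoint lands correctly. When $r\geq 3$, Lemma~\ref{lem:minimaldp} realises any triple of disjoint closed intervals on $\p^1(\r)$ as $I(X,\pi)$ for a minimal real conic bundle on a del Pezzo surface of degree~$2$; choosing such a triple with pairwise distinct cross-ratio invariants makes its stabiliser in $\operatorname{PGL}_2(\r)$ trivial, so no nontrivial $\nu$ satisfies~(1). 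The main hurdle of the entire argument is therefore the $r=3$ case of $(4)\Rightarrow(1)$, which rests on the geometric fact that the Geiser involution acts trivially on the set of connected components of $X(\r)$.
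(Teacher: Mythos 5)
Your overall route coincides with the paper's: (\ref{Prp:ExchangeComp1})$\Rightarrow$(\ref{Prp:ExchangeComp2}) via the classification of minimal conic bundles over a fixed base (Corollary~\ref{Prp:Equivalences}, which is exactly how the paper deduces it from Corollary~\ref{Cor:BirAutI}), the trivial inclusions for the middle implications, the Iskovskikh link decomposition for (\ref{Prp:ExchangeComp4})$\Rightarrow$(\ref{Prp:ExchangeComp1}), an explicit M\"obius exchange of the two intervals for $r=2$, and the cross-ratio genericity argument for the failure when $r\geq 3$. Your shortcut for $r\geq 4$ through the last assertion of Theorem~\ref{Thm:Birapport} is a harmless variant, subsumed by the paper's uniform treatment of $r\geq 3$.

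The genuine problem is your $r=3$ case. You assert that $X$ is then a del Pezzo surface of degree~$2$, carrying the Geiser involution $\sigma_X$ and the second fibration $\pi\circ\sigma_X$. This is false in general: by Lemma~\ref{Lem:KX4}, a minimal conic bundle with six singular fibres admits a second fibration (equivalently, is del Pezzo of degree $2$) only in special cases; for a generic such $X$ there is no $\sigma_X$ at all, yet $\bir(X)$ can still exceed $\bir(X,\pi)$ through maps of the form $\psi^{-1}\iota\psi$, where $\psi$ is a birational map of conic bundles to some degree-$2$ del Pezzo surface $X'$ and $\iota$ is the Geiser involution of $X'$. Even when $X$ itself happens to be del Pezzo, the type~IV links in your decomposition occur on the intermediate surfaces $X_i$, not on $X$, so ``correspond, up to identification, to $\sigma_X$'' hides the actual work: one must, as the paper does by induction on the number of type~IV links (using part $b)$ of Lemma~\ref{Lem:KX4} to write each such link as a Geiser involution followed by an isomorphism of conic bundles), rewrite $\beta$ as a composition of elements of $\bir(X,\pi)$ and of conjugates $\psi\iota\psi^{-1}$, and then check that each such conjugate acts trivially on $\{M_1,M_2,M_3\}$. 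That last point does follow from your (correct) key observation that the Geiser involution of a degree-$2$ del Pezzo with three real components is the sheet involution over the interiors of the three ovals of the branch quartic, transported back through $\psi$ because $\psi$ is a conic-bundle map and components correspond to intervals of $I(X,\pi)$; but as written your argument neither applies to the general $X$ it must cover nor carries out this conjugation step.
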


\begin{proof}
The implications $(\ref{Prp:ExchangeComp2})\Rightarrow (\ref{Prp:ExchangeComp1})$ and $(\ref{Prp:ExchangeComp2})\Rightarrow (\ref{Prp:ExchangeComp3})\Rightarrow (\ref{Prp:ExchangeComp4})$ are obvious.

The implication $(\ref{Prp:ExchangeComp1})\Rightarrow (\ref{Prp:ExchangeComp2})$ is a direct consequence of Corollary~\ref{Cor:BirAutI}
). 

We prove now that if $r\leq 2$, Assertion $(\ref{Prp:ExchangeComp1})$ is always satisfied, hence all the conditions are equivalent (since all are true). When $r\leq 1$, take $\alpha$ to be the identity. When $r=2$, we make a linear change of coordinates to the effect  that  $I_{1}=\{(x:1)\ |\ 0\le x\le 1\}$ and $I_{2}$ is bounded by $(1:0)$ and $(\lambda:1)$, for some $\lambda \in \r$, $\lambda>1$ or $\lambda<0$. Then, $\alpha\colon (x_{1}:x_{2})\mapsto (\lambda x_{2}:x_{1})$ is an involution which exchanges $I_{1}$ and $I_{2}$.

It remains to prove the implication $(\ref{Prp:ExchangeComp4})\Rightarrow (\ref{Prp:ExchangeComp1})$ for $r\geq 3$. We decompose $\beta$ into elementary links \[X=X_{0}\stackrel{\beta_{1}}{\dasharrow}X_{1}\stackrel{\beta_{2}}{\dasharrow}\dots \stackrel{\beta_{n-1}}{\dasharrow}X_{n-1}\stackrel{\beta_{n}}{\dasharrow}X_{n}=X\]
as in \cite[Theorem 2.5]{IskFact}. It follows from the description of the links of \cite[Theorem 2.6]{IskFact} that each of the links is of type $II$ or $IV$, and that the links of type $II$ are birational maps of conic bundles and the links of type $IV$ occur on del Pezzo surfaces of degree $2$. 

In consequence, each of the $X_{i}$ admits a conic bundle structure given by $\pi_{i}\colon X_{i}\to \p^1_{\r}$, where $\pi_{0}=\pi_{n}=\pi$, and if $\beta_{i}$ has type $II$, it  is a birational map of conic bundles $(X_{i-1},\pi_{i-1})\dasharrow (X_{i},\pi_{i})$, and if it has type $IV$, it is an isomorphism $X_{i-1}\to X_{i}$ which does not send the general fibre of $\pi_{i-1}$ on those of $\pi_{i}$. In this latter case, since $\pi_{i}$ and $\pi_{i-1}\beta_{i}$ have distinct general fibres, $X_{i-1}$ and $X_{i}$ are del Pezzo surfaces of degree~$2$, and the Geiser involution $\iota_{i-1}\in \aut(X_{i-1})$ exchanges the two general fibres (follows from \cite[Theorem 2.6]{IskFact}, but also from Lemma~\ref{Lem:KX4}). 
This means that the map $\beta_{i}\circ\iota_{i-1}$, that we denote by $\gamma_{i}$, is an isomorphism of conic bundles $(X_{i-1},\pi_{i-1})\to (X_{i},\pi_{i})$.

 Now, we prove by induction on the number of links of type $IV$ that $\beta$ may be decomposed into compositions of elements of $\bir(X,\pi)$ and maps of the form $\psi \iota \psi^{-1}$ where $\psi$ is a birational map of conic bundles $(X,\pi)\dasharrow (X',\pi')$, $(X',\pi')$ is a del Pezzo surface of degree $2$ and $\iota\in \aut(X')$ is the Geiser involution. 
 If there is no link of type $IV$, $\beta$ preserves the conic bundle structure given by $\pi$. Otherwise, denote by $\beta_{i}$ the first link of type $IV$, which is an isomorphism $\beta_{i}\colon X_{i}\to X_{i+1}$, and write $\beta_{i}=\gamma_{i}\circ \iota_{i-1}$ as before. 
 We write $\psi=\beta_{i-1}\circ\dots\circ\beta_{1}$, which is a birational map of conic bundles $\psi\colon (X,\pi)\dasharrow (X_{i},\pi_{i})$. 
 Then, $\beta= (\beta_{n}\circ\dots\circ\beta_{i+1}\circ\gamma_{i}\circ\psi)(\psi^{-1}\iota_{i-1}\psi)$. Applying the induction hypothesis on the map $(\beta_{n}\circ\dots\circ\beta_{i+1}\circ\gamma_{i}\circ\psi)\in \bir(X)$, we are done.

Now, observe that when $(X',\pi')$ is a minimal real conic bundle and $X'$ is a del Pezzo surface of degree $2$, the map $\doublecov\colon X'\to\p^2_{\r}$ given by $|-K_{X'}|$ is a double covering, ramified over a smooth quartic curve $\Gamma\subset \p^2_{\r}$ (see e.g.\ \cite{bib:DemDPezzo}). 
Since $(X,\pi)$ is minimal, $(K_X)^2=8-2r$ thus $\pi$ has $r=6$ singular fibres , so $I(X,\pi)$ is the union of three intervals and $X(\r)$ is the union of $3$ connected components. 
This implies that $\Gamma(\r)$ is the union of three disjoint ovals. A connected component $M$ of $X(\r)$ is homeomorphic to a sphere, and  surjects by $\doublecov$ to the interior of one of the three ovals.
 The  Geiser involution (induced by the double covering) induces an involution on $M$, which fixes the preimage of the oval. 
 This means that the Geiser involution sends any connected component of $X(\r)$ on itself. 
 Thus, in the decomposition of $\beta$ into elements of $\bir(X,\pi)$ and conjugate elements of Geiser involutions, the only relevant elements are those of $\bir(X,\pi)$. There exists thus $\beta'\in \bir(X,\pi)$ which acts on the connected components of $X(\r)$ in the same way as $\beta$. This shows that $(\ref{Prp:ExchangeComp4})$ implies $(\ref{Prp:ExchangeComp1})$.
 
 We finish by proving that $(\ref{Prp:ExchangeComp1})$ is false in general, when $r\geq 3$. This follows from the fact that if $\Sigma$ is a general finite subset of $2r$ distinct points of $\p^1_{\r}$, the group $\{\alpha\in\aut(\p^1_{\r})\ |\ \alpha(\Sigma)=\Sigma\}$ is trivial. Supposing this fact true, we obtain the result by applying it to the $2r$ boundary points of $I(X,\pi)$. Let us prove the fact.
 The set of $2r$-tuples of $\p^1_{\r}$ is an open subset $W$ of $(\p^1_{\r})^{2r}$. For any non-trivial permutation $\upsilon \in \Sym_{2r}$, we denote by $W_{\upsilon}\subset W$ the set of points $a=(a_{1},\dots,a_{2r})\in W$ such that there exists $\alpha\in \aut(\p^1_{\r})$ with $\alpha(a_{i})=a_{\upsilon(i)}$ for each $i$. Let $a\in W_{\upsilon}$, and take two $4$-tuples  $\Sigma_{1},\Sigma_{2}$ of $a_{i}$'s with $\Sigma_{1}\not=\Sigma_{2}$ and $\Sigma_{2}=\upsilon(\Sigma_{1})$ (this is possible since $\upsilon$ is non-trivial). Then, the cross-ratio of the $a_{i}$'s in $\Sigma_{1}$ and in $\Sigma_{2}$ are the same. This implies a non-trivial condition on $W$. Consequently, $W_{\upsilon}$ is contained in a closed subset of $W$. Doing this for all non-trivial permutations $\upsilon$, we obtain the result.
\end{proof}

\section{Real algebraic models}\label{Sec:RealAlgModels}

The aim of this section is to go further with non-minimal surfaces with $2$ or $3$ connected components. We begin to show how \emph{to separate} infinitely near points to the effect that any such a surface $Y(\r)$ is isomorphic to a blow-up $B_{a_1,\dots,a_m}X(\r)$ where $X$ is minimal and $a_1,\dots,a_m$ are distinct proper points of $X(\r)$. Then, we replace $X(\r)$ by an isomorphic del Pezzo model (Corollary~\ref{Cor:23comp2conicbundles}) and we use the fact that $\aut\bigl(X(\r)\bigr)$ is very transitive on each connected component for such an $X$ (Proposition~\ref{Prp:TransivitySurface})  to prove that in many cases, if two birational surfaces $Y$ and $Z$ have homeomorphic real parts then $Y(\r)$ and $Z(\r)$ are isomorphic. As a corollary, we get that in any cases, $\aut\bigl(Y(\r)\bigr)$ is very transitive on each connected component.
 
\begin{prop}\label{Prp:SeparationPts}
Let $X$ be a  minimal geometrically rational real surface, with $\#X(\r)=2$ or $\#X(\r)=3$, and let $\eta\colon Y\to X$ be a birational morphism.

Then there exists a blow-up $\eta'\colon Y'\to X$,  whose centre is a finite number of distinct real proper points of $X$, and such that $Y'(\r)$ is isomorphic to $Y(\r)$. 

Moreover, we can assume that the isomorphism $Y(\r)\to Y'(\r)$ induces an homeomorphism $\eta^{-1}(M) \to (\eta')^{-1}(M)$ for each connected component $M$ of $X(\r)$.
\end{prop}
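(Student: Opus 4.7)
My plan is to argue by induction on the number of infinitely near centres appearing in a factorisation of $\eta$ as a composition of elementary blow-ups. First, I would observe that any blow-up at a pair of conjugate imaginary points leaves the real locus unchanged, since its exceptional divisors are conjugate imaginary curves with no real points; hence such blow-ups can be deleted from the factorisation, and we may assume $\eta$ is a composition of blow-ups at real points only. If no centre is infinitely near, we are done; otherwise the goal is to reduce the number of infinitely near centres by one.

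For the inductive step, I would apply Corollary~\ref{Cor:23comp2conicbundles} to replace $X$ by a real-part-isomorphic surface $X^*$ admitting two minimal conic bundle structures $\pi_1,\pi_2$ with distinct foliations; Proposition~\ref{Prp:TransivitySurface} then supplies a very transitive action of $\aut\bigl(X^*(\r)\bigr)$ on each connected component of $X^*(\r)$. I would single out the first infinitely near blow-up in the factorisation: a real point $q$ lying on the exceptional divisor $E_p$ of an earlier blow-up at a proper real point $p$. After commuting unrelated proper blow-ups, $p$ and $q$ can be taken as the centres of the first two blow-ups.

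The heart of the proof is a separation claim: the blow-up of $X^*$ at $p$ and then at the infinitely near $q$ has real part isomorphic to the blow-up of $X^*$ at two distinct proper real points $p,p'$ for a suitable $p'$. I would prove it by constructing a birational self-map $\sigma \in \bir(X^*)$ whose complex base locus consists of the cluster $\{p,q\}$ together with finitely many imaginary points, and such that $\sigma^{-1}$ has base locus $\{p,p'\}$ together with imaginary points only. Such a $\sigma$ lifts to a biregular isomorphism between the two resolutions of its complex base locus; contracting the exceptional divisors over the imaginary base points on each side -- which does not affect the real parts -- yields the desired isomorphism. Transporting the remaining blow-ups $\eta_3,\dots,\eta_n$ through this isomorphism produces a new morphism to $X^*$ with strictly fewer infinitely near centres, and the induction proceeds.

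The hard part will be the explicit construction of $\sigma$. I would aim to obtain it by composing a twisting map for the conic bundle $(X^*,\pi_1)$ in the style of Lemma~\ref{Lem:TransivityConicBundle}, which already lies in $\aut\bigl(X^*(\r)\bigr)$ and has only imaginary base points, together with an elementary transformation centred at a conjugate pair of imaginary points, chosen to convert the real tangent direction $q$ into a distinct proper real point $p'$; the effect on the cluster $\{p,q\}$ is controlled by the tangent-space transitivity established in Lemma~\ref{Lem:TransivityTangentConicBundle}. The moreover statement on connected components follows because every automorphism invoked from Proposition~\ref{Prp:TransivitySurface} may be chosen to stabilise each component of $X^*(\r)$, so the resulting isomorphism $Y(\r) \to Y'(\r)$ sends $\eta^{-1}(M)$ to $(\eta')^{-1}(M)$ for each connected component $M$ of $X(\r)$.
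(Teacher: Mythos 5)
Your overall skeleton matches the paper's: discard the imaginary centres, pass to a model with two minimal conic bundle structures via Corollary~\ref{Cor:23comp2conicbundles}, use the very transitive action of Proposition~\ref{Prp:TransivitySurface} to put the proper centres in general position, and then ``separate'' an infinitely near real point by a birational self-map that is biregular away from the relevant real cluster. The gap is in the one step that carries all the weight, namely the construction of $\sigma$. You propose to build $\sigma$ by composing a twisting map of $(X^*,\pi_1)$ (an element of $\aut\bigl(X^*(\r)\bigr)$) with an elementary transformation centred at a \emph{conjugate pair of imaginary points}. Both of these maps are biregular, with biregular inverse, on a neighbourhood of the real locus; hence so is any composition of them. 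Such a composition can have no real base points at all, so it can never contain the real cluster $\{p,q\}$ in its base locus, and, being an isomorphism near $X^*(\r)$, it sends the infinitely near point $q$ of $p$ to an infinitely near point of the image of $p$ --- it cannot ``convert the real tangent direction $q$ into a distinct proper real point $p'$''. In other words, the very phenomenon you must produce (separation of infinitely near real points) is exactly what the maps in your toolbox are incapable of producing. The appeal to Lemma~\ref{Lem:TransivityTangentConicBundle} does not repair this: that lemma only moves $q$ inside $E\setminus\{$fibre direction$\}$ while keeping it infinitely near $p$, and it explicitly \emph{fixes} the $\pi_1$-fibre direction, so it also cannot handle the case where $q$ is that direction.

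What is actually needed, and what the paper does, is an elementary transformation centred at the real point $p$ itself: blow up $p$ and contract the strict transform $F_i$ of the real fibre of $\pi_i$ through $p$, where $i\in\{1,2\}$ is chosen so that $q\notin F_i$ (possible because the two fibres through $p$ are transverse, so $F_1$ and $F_2$ are disjoint on the blow-up --- this is where the second fibration is genuinely used). On the blow-up this map is a morphism and sends $q$ to a \emph{proper} real point of the new minimal conic bundle $(X_2,\pi')$; since $\#X(\r)\geq 2$ the bundle has singular fibres and $I(X_2,\pi')=I(X,\pi_i)$, so Corollary~\ref{Prp:Equivalences} gives an isomorphism $X_2(\r)\to X(\r)$ commuting with the fibrations, which brings you back to $X$ and, because it respects $\pi_i$, also yields the ``moreover'' statement on connected components (your argument for that clause likewise presupposes the unproved separation isomorphism). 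Without an ingredient of this kind --- a link whose centre is the real point $p$, not imaginary points --- your induction cannot get off the ground.
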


\begin{proof}
According to Corollary~\ref{Cor:23comp2conicbundles}, we may assume that $X$ admits two minimal conic bundles $\pi_1\colon X\to \p^1_\r$ and $\pi_2\colon X\to \p^1_\r$ inducing distinct foliations on $X(\c)$. 
Preserving the isomorphism class of $Y(\r)$, we may assume that the points in the centre of $\eta$ are all real (such a point may be a proper point of $X(\r)$ or an infinitely near point). Let us denote by $m$  ($= K_{X}^2 - K_Y^2$) the number of those points. We prove the result by induction on $m$.

The cases $m=0$ and $m=1$ being obvious (take $\eta'=\eta$), we assume that $m\ge 2$. We decompose $\eta$ as $\eta=\theta\circ \epsilon$, where $\epsilon\colon Y\to Z$ is the blow-up of one real point $q\in Z$, and $\theta\colon Z\to Y$ is the blow-up of $m-1$ real points. By induction hypothesis, we may assume that $\theta$ is the blow-up of $m-1$ proper points of $X$, namely $a_{1},\cdots,a_{m-1}\in X(\r)$. 
Moreover,  applying   Proposition~\ref{Prp:TransivitySurface}, we may move the points by an element of $\aut\bigl(X(\r)\bigr)$, and assume that $\pi_{1}(a_{i})\ne \pi_{1}(a_{j})$ and $\pi_{2}(a_{i})\ne \pi_{2}(a_{j})$ for $i\ne j$, and that the fibre of $\pi_{1}$ passing through $a_{i}$ and the fibre of $\pi_{2}$ passing through $a_{i}$ are nonsingular and transverse at $a_{i}$, for each $i$.

If $\theta(q)\notin\{a_{1},\dots,a_{m-1}\}$, then $\eta$ is the blow-up of $m$ distinct proper points of $X$, hence we are done. 
Otherwise, assume that $\theta(q)=a_{1}$. 
We write $E=\theta^{-1}(a_{1})\subset Z$, and denote by $F_{i}\subset Z$ the strict pull-back by $\eta$ of the fibre of $\pi_{i}$ passing through $a_{1}$, for $i=1,2$. Then, $F_{1}$ and $F_{2}$ are two $(-1)$-curves which do not intersect. 
Hence, the point $q\in E$ belongs to at most one of the two curves, so we may assume that $q\notin F_{1}$. 
Denote by $\theta_{2}\colon Z\to X_{2}$ the contraction of the $m-1$ disjoint $(-1)$-curves $F_{1},\theta^{-1}(a_{2}),\dots,\theta^{-1}(a_{m-1})$. 
Since $q$ does not belong to any of these curves, $\eta_{2}=\theta_{2}\circ \epsilon$ is the blow-up of $m-1$ distinct proper points of $X_{2}$. It remains to find an isomorphism $\gamma\colon X_{2}(\r)\to X(\r)$ such that for each connected component $M$ of $X(\r)$, $\gamma\eta_{2}$ sends $\eta^{-1}(M)$ on $M$.

Denoting $\pi'=\pi_{1}\circ\theta\circ \theta_{2}^{-1}$, the map $\psi=\theta_{2}\circ \theta^{-1}$ is a birational map of conic bundles $(X,\pi_{1})\dasharrow (X_{2},\pi')$, which factorizes as the blow-up of $a_{1}$, followed by the contraction of the strict transform of the fibre passing through $a_{1}$. Therefore, the conic bundle $(X_{2},\pi')$ is minimal. Since $\#X(\r)>1$ and $\pi'\psi=\pi_{1}$, Proposition~\ref{Prp:Equivalences} yields the existence of an isomorphism $\gamma\colon X_{2}(\r)\to X(\r)$ such that $\pi_{1}\gamma=\pi'$. 
Observe that 
$\gamma\eta_{2}\circ \eta^{-1}=\gamma\theta_{2}\circ \theta^{-1}=\gamma\psi$ is a birational map $X\dasharrow X$ which satisfies $\pi\circ(\gamma\eta_{2}\circ \eta^{-1})=\pi$. 
Consequently, for any connected component $M$ of $X(\r)$, which corresponds to $\pi^{-1}(V) \cap X(\r)$, for some interval $V\subset \p^1_{{\r}}$, we find $\pi(\gamma\eta_{2}\eta^{-1}(M))=\pi(M)=V$, thus $\gamma\eta_{2}$ sends $\eta^{-1}(M)$ on $M$.
\end{proof}

\begin{cor}
Let $X$ be a minimal geometrically rational real surface, such that $\#X(\r)=2$ or $\#X(\r)=3$, and let $\eta\colon Y\to X$, $\epsilon\colon Z\to X$ be two birational morphisms. Denote by $M_{1},\dots,M_{r}$ the connected components of $X(\r)$ $(r=2,3)$. Then, the following are equivalent:
\begin{enumerate}
\item
$\eta^{-1}(M_{i})\subset Y(\r)$ and $\epsilon^{-1}(M_{i})\subset Z(\r)$ are homeomorphic for each $i$;
\item
there exists an isomorphism $Y(\r)\to Z(\r)$ which  induces an homeomorphism $\eta^{-1}(M_{i})\to \epsilon^{-1}(M_{i})$ for each $i$.
\end{enumerate}
\end{cor}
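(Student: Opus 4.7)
The implication $(2)\Rightarrow (1)$ is immediate, since an isomorphism between real parts is in particular a homeomorphism. For the converse the plan is to reduce to a situation where both $\eta$ and $\epsilon$ blow up distinct proper real points of $X$, then to count, and finally to invoke the very transitivity established earlier.

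First I would apply Proposition~\ref{Prp:SeparationPts} to both $\eta$ and $\epsilon$, replacing them by birational morphisms $\eta'\colon Y'\to X$ and $\epsilon'\colon Z'\to X$ that are blow-ups of finite sets $A=\{a_{1},\dots,a_{p}\}$ and $B=\{b_{1},\dots,b_{q}\}$ of distinct proper real points of $X$, together with isomorphisms $Y(\r)\cong Y'(\r)$ and $Z(\r)\cong Z'(\r)$ which induce homeomorphisms on the preimage of each $M_{i}$. Setting $A_{i}=A\cap M_{i}$ and $B_{i}=B\cap M_{i}$, the surface $(\eta')^{-1}(M_{i})$ is obtained from the connected surface $M_{i}$ by topologically blowing up $|A_{i}|$ distinct points, i.e.\ by taking the connected sum of $M_{i}$ with $|A_{i}|$ copies of $\p^{2}(\r)$; likewise for $\epsilon'$. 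The homeomorphism type therefore depends only on $M_{i}$ and on the integer $|A_{i}|$, so condition $(1)$ forces $|A_{i}|=|B_{i}|$ for every $i$. In particular $p=q$, and after relabelling we may assume that $a_{i}$ and $b_{i}$ belong to the same connected component of $X(\r)$ for every $i$.

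Next I invoke very transitivity. Since $X(\r)$ is not connected, $X$ is non-rational, so by Corollary~\ref{Cor:23comp2conicbundles} there exists a geometrically rational real surface whose real part is isomorphic to $X(\r)$ and which carries two minimal conic bundles with distinct foliations. Proposition~\ref{Prp:TransivitySurface} applies on that surface and, transported along the isomorphism of real parts, yields that $\aut\bigl(X(\r)\bigr)$ acts very transitively on each connected component of $X(\r)$. Applied to the compatible $p$-tuples $(a_{1},\dots,a_{p})$ and $(b_{1},\dots,b_{p})$, this produces $\alpha\in\aut\bigl(X(\r)\bigr)$ with $\alpha(a_{i})=b_{i}$ for each $i$, preserving each $M_{j}$.

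Finally, because $\alpha$ sends the centre of $\eta'$ bijectively onto the centre of $\epsilon'$, the lifting principle recalled in Section~\ref{Sec:Nota} produces an isomorphism $\beta\colon Y'(\r)\to Z'(\r)$ such that $\epsilon'\circ\beta=\alpha\circ\eta'$; the fact that $\alpha$ preserves each $M_{j}$ then forces $\beta$ to restrict to a homeomorphism $(\eta')^{-1}(M_{j})\to(\epsilon')^{-1}(M_{j})$. Composing with the two isomorphisms supplied by Proposition~\ref{Prp:SeparationPts} yields the isomorphism $Y(\r)\to Z(\r)$ required by $(2)$. The main obstacle in this plan is to apply very transitivity: strictly speaking Proposition~\ref{Prp:TransivitySurface} requires $X$ itself to admit two conic bundle structures, so one must carefully transport the statement through the isomorphism of real parts provided by Corollary~\ref{Cor:23comp2conicbundles}; this is harmless but should be spelt out.
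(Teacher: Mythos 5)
Your proposal is correct and follows essentially the same route as the paper's proof: reduce via Proposition~\ref{Prp:SeparationPts} to blow-ups of distinct real proper points, compare the numbers of blown-up points on each component via the homeomorphism hypothesis, use Corollary~\ref{Cor:23comp2conicbundles} together with Proposition~\ref{Prp:TransivitySurface} to move one set of centres onto the other component by component, and lift this automorphism to the desired isomorphism $\epsilon^{-1}\alpha\eta$. The only difference is that you spell out the counting argument and the transport of very transitivity through the isomorphism of real parts, points the paper leaves implicit.
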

\begin{proof}
$(2)\Rightarrow (1)$ being obvious, let us prove the converse.
According to Proposition~\ref{Prp:SeparationPts}, we may assume that $\eta$ and $\epsilon$ are the blow-ups of a finite number of distinct real proper points of $X$. Denote by $\Sigma_{\eta}$ and $\Sigma_{\epsilon}$ these two finite sets. For each $i$, the fact that $\eta^{-1}(M_{i})\subset Y(\r)$ and $\epsilon^{-1}(M_{i})\subset Z(\r)$ are homeomorphic implies that the numbers of points of $\Sigma_{\eta}\cap M_{i}$ and $\Sigma_{\epsilon}\cap M_{i}$ coincide.

By Corollary~\ref{Cor:23comp2conicbundles} and Proposition~\ref{Prp:TransivitySurface}, $\aut\bigl(X(\r)\bigr)$ is very transitive on each connected component of $X(\r)$. In particular, there exists an element $\alpha\in\aut\bigl(X(\r)\bigr)$ such that $\alpha(M_{i}) = M_{i}$ for each $i$ and $\alpha(\Sigma_{\eta}) = \Sigma_{\epsilon}$. Then, $\psi=\epsilon^{-1}\alpha\eta\colon Y(\r)\to Z(\r)$ is the wanted isomorphism.
\end{proof}

\begin{cor}\label{Cor:TransitivByComponent23}
Let $Y$ be a geometrically rational real surface with\, $\#Y(\r) = 2$ or $\#Y(\r) = 3$. 
Let $(p_1,\dots,p_n)$ and $(q_1,\dots,q_n)$ be two $n$-tuples of distinct points of $Y(\r)$ such that $p_i$ and $q_i$ belong to the same connected component for each $i$. 

Then, there exists an element $\alpha \in \aut\bigl(Y(\r)\bigr)$, which leaves each connected component of $Y(\r)$ invariant and such that $\alpha(p_i) = q_i$ for each~$i$.
\end{cor}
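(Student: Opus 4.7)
The plan is to reduce the problem to Proposition~\ref{Prp:TransivitySurface} by choosing a well-adapted minimal model $X$ of $Y$ and then lifting automorphisms of $X(\r)$ to $Y(\r)$. First, by Proposition~\ref{Prp:SeparationPts}, I will replace $Y$ by a surface with isomorphic real part so that $\eta\colon Y \to X$ is the blow-up of distinct proper real points $a_1,\ldots,a_m \in X(\r)$ of a minimal surface $X$. Since the number of connected components is preserved by blowing up real points, $\#X(\r) = \#Y(\r) \in \{2,3\}$, and Proposition~\ref{Prp:TopMinimal} forces $X$ to be a minimal conic bundle with $2$ or $3$ pairs of singular fibres. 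Corollary~\ref{Cor:23comp2conicbundles} then allows me to further replace $X$ (and $Y$) by an isomorphic real surface so that $X$ admits two minimal conic bundle structures inducing distinct foliations on $X(\c)$; Proposition~\ref{Prp:TransivitySurface} then gives that $\aut\bigl(X(\r)\bigr)$ acts very transitively on each connected component of $X(\r)$.

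Second, I will reduce to the situation where no $p_i$ or $q_i$ lies on any exceptional curve $E_j = \eta^{-1}(a_j)$. For $p_i$ lying on some $E_j$, the trick is to blow up $p_i$ in $Y$ and apply Proposition~\ref{Prp:SeparationPts} to the resulting morphism to $X$, whose last centre is infinitely near $a_j$; this produces a model $\hat Y \to X$ which is the blow-up of $m+1$ distinct proper real points of $X(\r)$ with real part isomorphic to that of the blow-up of $Y$ at $p_i$, and with the extra centre lying in the same connected component of $X(\r)$ as $a_j$. Contracting in $\hat Y$ a $(-1)$-curve over a centre lying in that component but distinct from the ``new'' one arising from $p_i$ yields a model whose real part is again isomorphic to $Y(\r)$, by the corollary following Proposition~\ref{Prp:SeparationPts}. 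The composite isomorphism of $Y(\r)$ to itself arising from this construction moves $p_i$ off $E_j$; iterating for every $p_i$ and $q_i$ on an exceptional curve (and using Proposition~\ref{Prp:TransivitySurface} on $X$ to keep the other $p_l, q_l$ in generic position during the process) achieves the desired configuration.

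Third, with every $\eta(p_i)$ and $\eta(q_i)$ distinct from all $a_k$, Proposition~\ref{Prp:TransivitySurface} applied to the two $(n+m)$-tuples of distinct points $\bigl(\eta(p_1),\ldots,\eta(p_n),a_1,\ldots,a_m\bigr)$ and $\bigl(\eta(q_1),\ldots,\eta(q_n),a_1,\ldots,a_m\bigr)$ of $X(\r)$, whose entries lie pairwise in the same connected components, produces $\gamma \in \aut\bigl(X(\r)\bigr)$ preserving each connected component, fixing every $a_k$ pointwise, and sending $\eta(p_i) \mapsto \eta(q_i)$ for each $i$. Since $\gamma$ is regular at each $a_k$ and fixes it, it lifts through $\eta$ to an automorphism $\tilde\gamma \in \aut\bigl(Y(\r)\bigr)$ acting on each exceptional curve $E_k$ by the projective transformation of $\mathbb{P}(T_{a_k}X)$ induced by $d\gamma|_{a_k}$, and satisfying $\tilde\gamma(p_i) = q_i$ while preserving each connected component of $Y(\r)$.

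The main obstacle is the second step, the separation of a point $p_i$ from the exceptional curve $E_j$. Verifying that the composite birational map arising from applying Proposition~\ref{Prp:SeparationPts} after the blow-up of $p_i$ and then contracting a carefully chosen $(-1)$-curve really descends to a real isomorphism of $Y(\r)$ that moves only $p_i$ (while leaving the other prescribed points in controlled positions) requires delicate tracking of how the real isomorphisms produced by Proposition~\ref{Prp:SeparationPts} interact with the exceptional divisors.
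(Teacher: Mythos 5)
Your step 1 and step 3 are sound and agree with the paper's overall strategy: after Proposition~\ref{Prp:SeparationPts} and Corollary~\ref{Cor:23comp2conicbundles} one may assume $\eta\colon Y\to X$ is the blow-up of distinct real proper points of a minimal surface $X$ carrying two conic bundles, and an element of $\aut\bigl(X(\r)\bigr)$ fixing every centre lifts to $\aut\bigl(Y(\r)\bigr)$. The genuine gap is in step 2, and it is not merely a matter of ``delicate tracking'': the construction you describe does not produce any isomorphism of $Y(\r)$ at all. The composite birational map $Y\dasharrow\hat Y\dasharrow Y'$ obtained by blowing up $p_i$, applying Proposition~\ref{Prp:SeparationPts}, and contracting a $(-1)$-curve different from the one coming from $p_i$ has the \emph{real} point $p_i$ as a base point (its exceptional curve survives in $Y'$) and contracts a real curve of $Y$; hence it is not an isomorphism between real parts, and $p_i$ has no image point under it, so it cannot ``move $p_i$ off $E_j$''. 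The only identification $Y'(\r)\cong Y(\r)$ you can invoke is the abstract one from the corollary following Proposition~\ref{Prp:SeparationPts}, which gives no control on where $p_i$, the other $p_l$ and the $q_l$ land; regaining that control is exactly the transitivity statement being proved, so the argument becomes circular at this point. (Note also that Proposition~\ref{Prp:SeparationPts}, as stated, does not even identify which of the $m+1$ exceptional curves of $\hat Y\to X$ is ``the new one arising from $p_i$''.)

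The missing ingredient is Lemma~\ref{Lem:TransivityTangentConicBundle}, which your proposal never uses and which is the heart of the paper's proof of this corollary. The paper argues by induction on the number $m$ of centres: writing $\eta$ as the blow-up of $b$ over $Z$ (the blow-up of $a_1,\dots,a_{m-1}$), it first uses the induction hypothesis to move the $p_i$ off $F_1\setminus E$, $F_2\setminus E$ and the old exceptional curves, where $F_1,F_2$ are the strict transforms of the fibres of $\pi_1,\pi_2$ through $a_m$ and $E$ is the new exceptional curve; then Lemma~\ref{Lem:TransivityTangentConicBundle} provides twisting maps fixing $a_1,\dots,a_m$ and all the $\eta(p_l)$ and acting transitively on $E\setminus F_1$, which allows one to move the points lying on $E$ away from $F_2\cap E$; one then contracts $F_2$ (an elementary transformation), identifies the resulting minimal conic bundle with $X$ by Proposition~\ref{Prp:Equivalences}, and uses an automorphism of that model fixing the centres to push the images of the points of $E$ off $\eta'(E)$, before lifting back and concluding with the induction hypothesis on $Z$. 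Some device of this kind -- transitivity along the exceptional curve, i.e.\ on tangent directions, relative to finitely many fixed fibres and points -- is unavoidable if you want to move a marked point off an exceptional curve while keeping the remaining marked points and the centres fixed; without it your step 2 cannot be completed, and with it your step 3 is essentially the paper's conclusion.
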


\begin{proof}Let $\eta\colon Y\to X$ be a birational morphism to a minimal real surface $X$; observe that $\#X(\r)=\#Y(\r)$. 
According to Corollary~\ref{Cor:23comp2conicbundles}, we may assume that $X$ admits two minimal conic bundles $\pi_1\colon X\to \p^1_\r$ and $\pi_2\colon X\to \p^1_\r$ inducing distinct foliations on $X(\c)$. By Proposition~\ref{Prp:SeparationPts}, we can suppose that $\eta$ is the blow-up of $m$ distinct real proper points $a_{1},\dots,a_{m}\in X$. We prove the result by induction on $m$.

If $m=0$, which means that $X = Y$, the result  follows from Proposition~\ref{Prp:TransivitySurface}. 

If $m>0$, denote by  $\eta_0\colon Z\to X$ the blow-up of $a_{1},\dots,a_{m-1}$ ($\eta_{0}$ is the identity if $m=1$), and by $\eta_{1}\colon Y\to Z$ the blow-up of $b = \eta_{0}^{-1}(a_{r})$.

Applying Proposition~\ref{Prp:TransivitySurface},
we may assume that $\pi_1(a_i)\not=\pi_1(a_j)$ and $\pi_2(a_i)\ne \pi_2(a_j)$ for $i\ne j$, and that the fibre of $\pi_{1}$ passing through $a_{i}$ and the fibre of $\pi_{2}$ passing through $a_{i}$ are nonsingular and transverse at $a_{i}$, for each $i$.  
Let us denote by $E\subset Y$ the exceptional curve $\eta_1^{-1}(b)$ of $\eta_1$ and by $F_{i}$ the strict transform on $Y$ of the fibre of $\pi_i$ passing through $a_m$, for $i=1,2$. 
Then $E$, $F_{1}$ and $F_{2}$ are three $(-1)$-curves, $F_{1}$ and $F_{2}$ do not intersect, and $E$ intersect transversally each of the $F_{i}$. 
By induction hypothesis, we may use the lift of an element of $\aut\bigl(Z(\r)\bigr)$ which fixes $b$ to assume that no one of the points $p_i$ belongs to $F_{1}\backslash E$, $F_{2}\backslash E$ or to $\eta^{-1}(a_i)$ for $i=1,\dots,m-1$. 
Then the group $G=\{\alpha \in \aut\bigl(X(\r)\bigr)\ |\ \pi_{1}\alpha=\pi_{1}, \alpha\mbox{ fixes }a_1,\dots,a_m,\eta(p_1),\dots,\eta(p_n)\}$, acts transitively on $E\backslash F_{1}$  (Lemma~\ref{Lem:TransivityTangentConicBundle}). 
Lifting a well-chosen element of this group in $\aut\bigl(Y(\r)\bigr)$, we may move the points $p_i$ and assume that no one of the $p_i$  belongs to $F_{2}$ (i.e.\ we can avoid $F_{2}\cap E$). 
Denote by $\eta'\colon Y\to X'$  the contraction of the disjoint $(-1)$-curves $F_2,\eta^{-1}(a_1),\dots
\eta^{-1}(a_{m-1})$. 

Then, the birational map $\psi=\eta'\eta^{-1}\colon X\dasharrow X'$ is a birational map of conic bundles $(X,\pi_2)\dasharrow (X',\pi')$, where $\pi'=\pi_2\psi^{-1}$, which consists of the blow-up of $a_{m}$, followed by the contraction of the strict transform of the fibre passing through $a_{m}$. 
Therefore, the conic bundle $(X',\pi')$ is minimal. Since $\#X(\r)>1$,
Proposition~\ref{Prp:Equivalences} yields the existence of an isomorphism $\gamma\colon X'(\r)\to X(\r)$ such that $\pi_{2}\gamma=\pi'$.
Therefore, there exists an element $\beta\in\aut \bigl(X'(\r)\bigr)$ which fixes all the points blown-up by $\eta'$, which fixes all the points $\{\eta'(p_i),p_i\notin E\}$, and which sends the points $\{\eta'(p_i),p_i\in E\}$ outside of $\eta'(E)$. Applying the lift of $\beta$ on $\aut\bigl(Y(\r)\bigr)$, we may assume that none of the points $p_i$ belongs to $E$. Doing the same manipulation with the $q_i$, it remains to use the lift of an element of $\aut\bigl(Z(\r)\bigr)$ which fixes $b$ and sends $\eta_1(p_i)$ on $\eta_1(q_i)$ for each $i$.
\end{proof}

\section{Proof of the main results}\label{Sec:Proofs}

The proof of Theorem~$\ref{Thm:Diff+Bir=DiffBir}$ was given at the end of Section~\ref{Sec:ConicBunDPS}. Now, we deduce the others results stated in the introduction from the results of Sections~\ref{Sec:VeryTransitive} and \ref{Sec:RealAlgModels}. The following lemma serves to prove most of them.

\begin{lem}\label{Lem:Magic}
Let $(X,\pi)$ be a minimal real conic bundle, such that $I(X,\pi)$ is the union of $r$ intervals $I_{1},\dots,I_{r}$, with $r=2$ or $r=3$.

Let $\eta_{Y}\colon Y\to X$ and $\eta_{Z}\colon Z\to X$ be two birational morphisms. For $i = 1,\dots,r$, we write ${X}_{i}=\pi^{-1}(I_{i})\cap X(\r)$, $Y_{i}=\eta_{Y}^{-1}(X_{i})\cap Y(\r)$ and $Z_{i}=\eta_{Z}^{-1}(X_{i})\cap Z(\r)$.

Let $p_{1},\dots,p_{n}\in Y(\r)$, $q_{1},\dots,q_{n}\in Z(\r)$ be two $n$-tuples of distinct points, and assume the existence of an homeomorphism $h\colon Y(\r)\to Z(\r)$ which sends $p_{i}$ on $q_{i}$ for each $i$, and sends $Y_{i}$ on $Z_{\nu(i)}$, where $\nu\in \Sym_{r}$ is a permutation of $\{1,\dots,r\}$.
Then, the following are equivalent:
\begin{enumerate}
\item\label{Lem:Magic1}
There exists an isomorphism $\beta\colon Y(\r)\to Z(\r)$ which sends  $Y_{i}$ on $Z_{\nu(i)}$ for each $i\in \{1,\dots,r\}$ and sends $p_{j}$ on $q_{j}$ for each $j\in \{1,\dots,n\}$.
\item\label{Lem:Magic2}
There exists an automorphism $\alpha\in \aut(\p^1_{\r})$ which sends $I_{i}$ on $I_{\nu(i)}$ for each $i\in \{1,\dots,r\}$.
\end{enumerate}

Moreover, both assertions are true if $r = 2$, and false in general when $r=3$.
\end{lem}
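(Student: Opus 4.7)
The plan is to deduce the equivalence from Proposition~\ref{Prp:ExchangeComp}, using Proposition~\ref{Prp:SeparationPts} to reduce to blow-ups of distinct proper real points and Corollary~\ref{Cor:TransitivByComponent23} to transport those points freely within each component.

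For $(\ref{Lem:Magic1})\Rightarrow (\ref{Lem:Magic2})$, I would view the isomorphism $\beta\colon Y(\r)\to Z(\r)$ as a birational map $Y\dasharrow Z$ and form $\beta_X:=\eta_Z\circ\beta\circ \eta_Y^{-1}\in\bir(X)$. On a real Zariski-open subset where $\eta_Y$, $\eta_Z$ and $\beta$ are all isomorphisms, $\beta_X$ sends $X_i$ onto $X_{\nu(i)}$ because $\beta$ sends $Y_i$ onto $Z_{\nu(i)}$. This is precisely hypothesis $(\ref{Prp:ExchangeComp4})$ of Proposition~\ref{Prp:ExchangeComp}, whose conclusion $(\ref{Prp:ExchangeComp1})$ yields the desired $\alpha\in\aut(\p^1_\r)$.

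For $(\ref{Lem:Magic2})\Rightarrow (\ref{Lem:Magic1})$, I would first invoke Proposition~\ref{Prp:ExchangeComp} to promote $\alpha$ to an automorphism $\beta_X\in\aut(X(\r),\pi)$ permuting the $X_i$ by $\nu$. Next, using Proposition~\ref{Prp:SeparationPts}, I may replace $Y(\r)$ and $Z(\r)$ by isomorphic real parts so that $\eta_Y$ and $\eta_Z$ become blow-ups of finite sets $S_Y,S_Z\subset X(\r)$ of distinct proper real points, the replacements being compatible with the partitions into connected components (and adjusting $h$, $(p_j)$, $(q_j)$ accordingly). The homeomorphism $h$ then forces $|S_Y\cap X_i|=|S_Z\cap X_{\nu(i)}|$ for every $i$, hence also $|\beta_X(S_Y)\cap X_{\nu(i)}|=|S_Z\cap X_{\nu(i)}|$. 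Corollary~\ref{Cor:TransitivByComponent23} therefore produces $\varphi\in\aut(X(\r))$ preserving each $X_i$ and sending $\beta_X(S_Y)$ bijectively to $S_Z$; lifting $\varphi\circ\beta_X$ through the blow-ups yields an isomorphism $\beta_0\colon Y(\r)\to Z(\r)$ which sends $Y_i$ onto $Z_{\nu(i)}$. To achieve $p_j\mapsto q_j$ as well, I note that $\beta_0(p_j)$ and $q_j$ lie in the same component of $Z(\r)$, since both $\beta_0$ and $h$ carry the partition $\{Y_i\}$ to $\{Z_{\nu(i)}\}$; one last application of Corollary~\ref{Cor:TransitivByComponent23} on $Z(\r)$ furnishes $\psi\in\aut(Z(\r))$ preserving components with $\psi(\beta_0(p_j))=q_j$ for all $j$, and $\beta:=\psi\circ\beta_0$ is as required.

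The moreover part then follows quickly. Proposition~\ref{Prp:ExchangeComp} guarantees $(\ref{Lem:Magic2})$ unconditionally when $r\leq 2$, so $(\ref{Lem:Magic1})$ holds as well by the implication just proved. For $r=3$, the three components of $X(\r)$ are all spheres by Proposition~\ref{Prp:TopMinimal}, so any $\nu\in\Sym_3$ is realised by a homeomorphism $h\colon X(\r)\to X(\r)$ (taking $Y=Z=X$ and no points $p_j$); yet for a generic placement of the six endpoints of the intervals $I_1,I_2,I_3$ in $\p^1(\r)$ only $\nu=\mathrm{id}$ lifts to an automorphism of $\p^1_\r$, by the cross-ratio argument appearing at the end of the proof of Proposition~\ref{Prp:ExchangeComp}. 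The main obstacle will be the bookkeeping in $(\ref{Lem:Magic2})\Rightarrow(\ref{Lem:Magic1})$: each successive modification by $\beta_X$, then $\varphi$, then $\psi$, must preserve the data fixed at the previous step, but the very transitivity supplied by Corollary~\ref{Cor:TransitivByComponent23} on each component is strong enough to render this manipulation essentially combinatorial.
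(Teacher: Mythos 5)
Your proposal is correct and follows essentially the same route as the paper: both directions are deduced from Proposition~\ref{Prp:ExchangeComp}, with Proposition~\ref{Prp:SeparationPts} used to reduce to blow-ups of distinct proper real points and Corollary~\ref{Cor:TransitivByComponent23} used to match the blown-up points and then the $p_j$ with the $q_j$. The only cosmetic difference is that the paper first normalises $\nu$ to the identity via the automorphism of $(X(\r),\pi)$, whereas you carry $\nu$ along by composing with $\beta_X$; the content is identical.
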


\begin{proof}
Observe that the $X_{i}$ (respectively the $Y_{i}$, $Z_{i}$) are the connected components of $X(\r)$ (respectively of $Y(\r)$, $Z(\r)$).

[$\ref{Lem:Magic1})\Rightarrow \ref{Lem:Magic2})$] The map $\eta_{Z}\beta\eta_{Y}^{-1}$ is a birational self-map of $X$, which restricts to an isomorphism $\varphi\colon V\to W$, where $V$ and $W$ are two real Zariski open subsets of $X$. Moreover, the hypothesis on $\beta$ implies that $\varphi(V(\r)\cap X_{i})=W(\r)\cap X_{\nu(i)}$. The existence of $\alpha$ is provided by Proposition~\ref{Prp:ExchangeComp}.

[$\ref{Lem:Magic2})\Rightarrow \ref{Lem:Magic1})$] Proposition~\ref{Prp:ExchangeComp} yields the existence of $\gamma\in \aut(X(\r),\pi)$ such that $\gamma(X_{i})=X_{\nu(i)}$. We may thus assume that $\nu$ is the identity. 
According to Proposition~\ref{Prp:SeparationPts}, we may moreover suppose that $\eta_{Y}$ and $\eta_{Z}$ are the blow-ups of a finite set of disjoint real proper points of $X$. Since $Y_{i}$ is homeomorphic to $Z_i$ for each $i$,  $\eta_{Y}$ is the blow-up of $a_{1},\dots,a_{m}$ and $\eta_{Z}$ is the blow-up of $b_{1},\dots,b_{m}$, where $a_{j}$ and $b_{j}$ belong to the same connected component of $X(\r)$ for each $j$.
Then, there exists an element of $\aut\bigl(X(\r)\bigr)$ which preserves each connected component of $X$ and sends $a_{j}$ on $b_{j}$ for each~$j$ (Corollary~\ref{Cor:TransitivByComponent23}). We may thus assume that $Y=Z$, and conclude by applying  Corollary~\ref{Cor:TransitivByComponent23} to~$Y$.

The fact that $\ref{Lem:Magic2})$ is true when $r=2$ and false in general when $r=3$ was proved in Proposition~\ref{Prp:ExchangeComp}.
\end{proof}

The following case shares many features with the rational case.

\begin{thm}\label{Thm:2CompVeryTransitive}
Let $X$ be a nonsingular geometrically rational real projective surface, and assume that $\#X(\r)=2$. Then the action of the group $\aut\bigl(X(\r)\bigr)$ on  $X(\r)$ is very transitive.
\end{thm}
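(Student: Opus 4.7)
The plan is to reduce to the minimal model and then invoke Lemma~\ref{Lem:Magic}. First I would pick a birational morphism $\eta \colon X \to X_0$ to a minimal model. Since blow-downs of real $(-1)$-curves and of pairs of disjoint conjugate imaginary $(-1)$-curves both preserve the number of connected components of the real locus, one has $\#X_0(\r) = \#X(\r) = 2$. Consulting Theorem~\ref{Thm:ClassicMinimal} and Proposition~\ref{Prp:TopMinimal}, the rational case is excluded (it would force $\#X_0(\r) = 1$), as are the del Pezzo cases with $\rho(X_0) = 1$ (which give $4$ or $5$ components). Hence $X_0$ carries a minimal conic bundle structure $\pi \colon X_0 \to \p^1_{\r}$ with $2r = 4$ singular fibres, and $I(X_0, \pi)$ is the union of $r = 2$ closed intervals.

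Next, I would fix two compatible $n$-tuples $(p_1, \dots, p_n)$ and $(q_1, \dots, q_n)$ of points of $X(\r)$ and a witnessing homeomorphism $h \colon X(\r) \to X(\r)$ with $h(p_i) = q_i$. Writing $M_1, M_2$ for the two connected components of $X(\r)$, the map $h$ induces a permutation $\nu \in \Sym_{\{1,2\}}$ with $h(M_i) = M_{\nu(i)}$. I would then apply Lemma~\ref{Lem:Magic} with $(X_0, \pi)$ in the role of the minimal conic bundle of that lemma, with $Y = Z = X$ and with $\eta_Y = \eta_Z = \eta$. The auxiliary hypothesis~\ref{Lem:Magic2}) — the existence of $\alpha \in \aut(\p^1_{\r})$ sending $I_i$ to $I_{\nu(i)}$ — holds automatically because $r = 2$, as recorded in the last sentence of Lemma~\ref{Lem:Magic} itself. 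Conclusion~\ref{Lem:Magic1}) then supplies the desired element $\beta \in \aut\bigl(X(\r)\bigr)$ with $\beta(p_j) = q_j$ for every $j$.

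The main obstacle is really only the identification of the minimal model, which is a routine bookkeeping step given the classification. All the genuinely difficult work — separating infinitely near points (Proposition~\ref{Prp:SeparationPts}), moving proper points around inside each component (Corollary~\ref{Cor:TransitivByComponent23}), and realising the possible component swap as a biregular automorphism of $X(\r)$ (via Proposition~\ref{Prp:ExchangeComp}, used inside Lemma~\ref{Lem:Magic}) — has already been absorbed into Lemma~\ref{Lem:Magic}. After the reduction to $X_0$, the theorem therefore follows by a single invocation of that lemma, with no further computation required.
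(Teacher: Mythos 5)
Your proposal is correct and takes essentially the same route as the paper: both rest on Corollary~\ref{Cor:TransitivByComponent23} and Lemma~\ref{Lem:Magic} (with the $r=2$ clause guaranteeing the required automorphism of $\p^1_{\r}$). The only difference is presentational -- the paper splits into the case where each pair $p_i,q_i$ lies in the same component (Corollary~\ref{Cor:TransitivByComponent23}) and the case where the components must be swapped (Lemma~\ref{Lem:Magic}), while you fold both into a single application of Lemma~\ref{Lem:Magic} after explicitly exhibiting the minimal conic bundle model.
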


\begin{proof}
Let $Y$ be a nonsingular geometrically rational real projective surface, with $\#Y(\r)=2$. Let $(p_{1},\dots,p_{n})$ and $(q_{1},\dots,q_{n})$ be two $n$-tuples of points which are compatible. We want to prove the existence of $\alpha\in \aut\bigl(Y(\r)\bigr)$ such that $\alpha(p_{i})=q_{i}$ for each $i$.

If $p_{i}$ and $q_{i}$ are in the same connected component of $Y(\r)$, the result follows from Corollary~\ref{Cor:TransitivByComponent23}.

Otherwise, the compatibility means that the two components of $X(\r)$ are homeomorphic and that $p_{i}$ and $q_{i}$ are in a distinct component for each $i$. Lemma~\ref{Lem:Magic} provides the existence of an element of $\aut\bigl(Y(\r)\bigr)$ which permutes the two connected components of $Y(\r)$. This reduces the situation to the previous case.
\end{proof}

Before proving Theorems~\ref{Thm:3compTrans} and \ref{thm:verytrans}, we describe the cases where the group of automorphisms is not very transitive.
\begin{lem}\label{Lem:NonTransitivity}Let $X$ be a nonsingular real projective surface, and assume that either $X$  is \emph{not} geometrically rational or $\#X(\r)>3$. The group $\aut\bigl(X(\r)\bigr)$ is then not very transitive on each connected component, and is neither $2$-transitive.
\end{lem}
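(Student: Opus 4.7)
My approach is to split on the hypothesis. In each case I will produce two compatible ordered pairs of points lying in a single connected component of $X(\r)$ which cannot be identified by any element of $\aut\bigl(X(\r)\bigr)$; such a configuration simultaneously defeats $2$-transitivity and very transitivity on each connected component (since it already ensures that $1$-transitivity fails on some component).

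Suppose first that $X$ is geometrically rational with $\#X(\r)\geq 4$. Fix a birational morphism $\eta\colon X\to X_0$ to a minimal model. Theorem~\ref{Thm:ClassicMinimal} combined with Proposition~\ref{Prp:TopMinimal} forces $X_0$ to be either a del Pezzo surface of degree $1$ or $2$ with $\rho(X_0)=1$, or to carry a minimal conic bundle structure $\pi_0\colon X_0\to\p^1_\r$ with at least $8$ singular fibres. In the del Pezzo sub-case, Corollary~\ref{Cor:DPcountable} gives that $\aut\bigl(X(\r)\bigr)$ is countable; since every component of $X(\r)$ is an uncountable real surface, no orbit can exhaust a component, so the action fails even to be $1$-transitive on each component. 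In the conic bundle sub-case, Theorem~\ref{Thm:Birapport} gives $\bir(X_0)=\bir(X_0,\pi_0)$, so every element of $\aut\bigl(X(\r)\bigr)\subset\bir(X)=\bir(X_0)$ preserves the fibration $\pi:=\pi_0\circ\eta$ up to an automorphism of $\p^1_\r$. Each connected component $M\subset X(\r)$ is the $\pi$-preimage of a non-trivial closed interval of $\p^1(\r)$ and so meets infinitely many smooth fibres of $\pi$; picking $p_1,p_2\in M$ on a common smooth fibre and $q_1,q_2\in M$ on two distinct smooth fibres yields compatible pairs which cannot be identified by any element of $\aut\bigl(X(\r)\bigr)$, since $\pi(p_1)=\pi(p_2)$ but $\pi(q_1)\ne\pi(q_2)$.

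For the case where $X$ is not geometrically rational, I would run through the Enriques-Kodaira classification. If $\kappa(X)=-\infty$, then $X$ is birational to $C\times\p^1$ for some smooth real curve $C$ of genus $g\geq 1$; the projection $\pi\colon X\dasharrow C$ factors through the Albanese variety and is therefore intrinsic, so every element of $\bir(X)$ preserves $\pi$, and the fibre-versus-distinct-fibre construction of the preceding paragraph applies to any component of $X(\r)$ meeting at least two smooth fibres of $\pi$. If $\kappa(X)\geq 0$, then $\bir(X)=\aut(X_{\mathrm{min}})$ on the unique minimal model $X_{\mathrm{min}}$; this group is countable unless $X_{\mathrm{min}}$ is abelian or bielliptic. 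In the bielliptic exception the Albanese map is an $\aut$-invariant elliptic fibration, and the fibration argument applies again; in the abelian exception, $\aut(X_{\mathrm{min}})$ is the extension of a finite group by the group of translations, hence the stabiliser of any real point is finite, so the orbit of $p_2$ under the stabiliser of $p_1$ is finite while the set of compatible candidates $q_2$ inside the same component is uncountable.

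The main obstacle is the non-geometrically-rational case: unlike the first case, no single already-cited result yields the required dichotomy between countability of $\aut\bigl(X(\r)\bigr)$ and preservation of a fibration to a curve, so one must go through each Enriques-Kodaira type and verify the dichotomy by hand, paying particular care to the abelian and bielliptic exceptions where the automorphism group is positive-dimensional. Once this dichotomy is in place, the compatible-tuple construction is uniform across all subcases.
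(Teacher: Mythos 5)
Your geometrically rational case is sound and is essentially the paper's argument: reduce to a minimal model, use Corollary~\ref{Cor:DPcountable} when $\rho(X_0)=1$, and use Theorem~\ref{Thm:Birapport} (at least $8$ singular fibres, so $\bir(X_0)=\bir(X_0,\pi_0)$) when $X_0$ is a conic bundle; your same-fibre versus distinct-fibres pair is a harmless variant of the paper's remark that the induced action on the base is finite. The problem is in the case $\kappa(X)\geq 0$. Your claimed dichotomy, ``$\bir(X)=\aut(X_{\mathrm{min}})$ is countable unless $X_{\mathrm{min}}$ is abelian or bielliptic,'' is false: a minimal properly elliptic surface such as $E\times C$ with $E$ an elliptic curve and $g(C)\geq 2$ has $\kappa=1$ and its automorphism group contains $E$ acting by translations on the first factor, hence is uncountable, while the surface is neither abelian nor bielliptic. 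So as written, the $\kappa=1$ surfaces with positive-dimensional automorphism group are simply not covered by your argument, and this is exactly the point where the paper does something different: for $\kappa(X)=1$ every birational self-map preserves the elliptic fibration defined by the pluricanonical system $|nK_X|$, after which your fibre-versus-distinct-fibres construction applies. Incorporating that invariant Iitaka fibration (rather than a countability claim) closes the gap and brings your case analysis in line with the paper's (general type: finite; $\kappa=1$: invariant elliptic fibration; $\kappa=0$: $\bir=\aut$ is an algebraic group of dimension at most $2$, hence not $2$-transitive; $\kappa=-\infty$ non-rational: invariant Albanese ruling).

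Two smaller inaccuracies, neither fatal: over $\r$ a non-rational uniruled surface need not be birational to $C\times\p^1$ (only to a conic bundle over $C$), but you only use the intrinsic Albanese fibration, so this does not affect the argument; and for an abelian surface the stabiliser in $\aut(X_{\mathrm{min}})$ of a point is in general countably infinite (e.g.\ $\GL_2(\z)$ for $E\times E$), not finite — your orbit-counting still works verbatim with ``countable'' in place of ``finite,'' and note that countability of the stabiliser (rather than of the whole group) is genuinely needed there, since translations do act transitively on each component of the real locus. Finally, in the ruled case you should justify in a line that every connected component of $X(\r)$ meets at least two smooth fibres: a component is a $2$-manifold, so it cannot lie in the ($1$-dimensional) real locus of finitely many fibres.
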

\begin{proof}
If $X$ has Kodaira dimension $2$, (surface of general type), it has only finitely many birational self-maps
(see e.g. \cite{ueno75}.)
If $X$ has Kodaira dimension $1$, every birational self-map of $X$ preserves the elliptic fibration induced by $|K_{X}|$.
If $X$ has Kodaira dimension $0$, and $X$ is minimal, then $\bir(X)=\aut(X)$. The group $\aut(X)$ is an algebraic group of dimension $1$ or $2$ (its neutral component is an elliptic curve or an Abelian surface). Thus, $\bir(X)$ can not be $2$-transitive. The case when $X$ is not minimal is deduced from this case.

If $X$ is a surface with Kodaira dimension $-\infty$, then $X$ is uniruled. If furthermore, $X$ is not geometrically rational and $X(\r)$ is non-empty, then the Albanese map $X \to C$ is a real ruling over a curve with genus $g(C) > 0$, see e.g. \cite[V.(1.8)]{Sil}, and the Albanese map is preserved by any birational self-map.

The remaining case is when $X$ is geometrically rational and $\#X(\r) > 3$; we prove now that the group $\aut\bigl(X(\r)\bigr)$ is not transitive. Denote by $\eta\colon X\to X_{0}$ a birational morphism to a minimal real surface, and observe that $\#X_{0}(\r) = \#X(\r) > 3$.   
Let us discuss the two cases for $X_{0}$ given by Theorem~\ref{Thm:ClassicMinimal}.
If $X_{0}$ is a del Pezzo surface with $\rho(X_{0})=1$, then $\aut\bigl(X(\r)\bigr)$ is countable (Corollary~\ref{Cor:DPcountable}), thus  $\aut\bigl(X(\r)\bigr)$ cannot be transitive.
The other case is when $\rho(X_{0})=2$. Then, $X_{0}$ endows a real conic bundle structure $(X_{0},\pi_{0})$, and $\bir(X_{0})=\bir(X_{0},\pi_{0})$ (Theorem~ \ref{Thm:Birapport}). 
Since the action of $\bir(X_{0},\pi_{0})$ on the basis of the conic bundle is finite (there are too much boundary points),  neither $\aut\bigl(X_{0}(\r)\bigr)$ nor $\aut\bigl(X(\r)\bigr)$ may be transitive.
\end{proof}

\begin{proof}[Proof of Theorem~$\ref{Thm:3compTrans}$] When $X$ is not geometrically rational or $\#X(\r)>3$, $\aut(X(\r))$ is not very transitive on connected components by Lemma~\ref{Lem:NonTransitivity}. In the remaining cases, $\aut(X(\r))$ is very transitive on connected components. When $\#X(\r)=2,3$, this is Corollary~\ref{Cor:TransitivByComponent23}. When $\#X(\r)=1$, this is the main result of \cite{hm3}.\end{proof}

\begin{proof}[Proof of Theorem~$\ref{thm:verytrans}$]According to Lemma~\ref{Lem:NonTransitivity}, we can
assume from now on that $X$ is a geometrically rational surface with $\#X(\r)\le 3$.
When $\#X(\r)=1$, $X$ is rational; the fact that $\aut\bigl(X(\r)\bigr)$ is $n$-transitive for every $n$ (and thus very transitive) is the main result of \cite{hm3}. When $\#X(\r)=2$, $\aut\bigl(X(\r)\bigr)$ is very transitive by Theorem~\ref{Thm:2CompVeryTransitive}. 

When $\#X(\r) = 3$, $\aut\bigl(X(\r)\bigr)$ is very transitive on each connected component (Theorem~\ref{Thm:3compTrans}). Thus, $\aut\bigl(X(\r)\bigr)$ is very transitive if and only if for any homeomorphism $h\colon X(\r)\to X(\r)$, there exists $\beta\in \aut\bigl(X(\r)\bigr)$ which permutes the components of $X(\r)$ in the same way that $h$ does. When these conditions are not satisfied, $\aut\bigl(X(\r)\bigr)$ is not $2$-transitive.

Let $X(\r)=M_1\sqcup M_2 \sqcup M_3$ be the decomposition into connected components.
If there is no pair $(i,j)$ such that $M_i\sim M_j$, then there is no nontrivial such $h$, hence $\aut\bigl(X(\r)\bigr)$ is very transitive.
If $M_1\sim M_2 \not\sim M_3$ or $M_1\sim M_2 \sim M_3$, the possibilities when this occur follow from Lemma~\ref{Lem:Magic}.  

For example, when $X$ is minimal (therefore $M_1\sim M_2 \sim M_3\sim S^2$), it admits a minimal real conic bundle structure $(X,\pi)$ (Theorem~\ref{Thm:ClassicMinimal} and Proposition~\ref{Prp:TopMinimal}), where $\pi$ has $6$ singular fibres. 
Then, $\aut\bigl(X(\r)\bigr)$ is very transitive if and only if $\{\alpha\in \aut(\p^1_{\r})\ |\ \alpha(I(X,\pi)) = I(X,\pi)\}$ acts transitively on the three intervals of $I(X,\pi)$. This is true in some special cases, but false in general. When $X$ is not minimal, $\aut\bigl(X(\r)\bigr)$ is very transitive for example when there is no pair of homeomorphic connected components of $X(\r)$, or when $X$ is the blow-up of a minimal surface $Y$ with a very transitive group $\aut\bigl(Y(\r)\bigr)$.
\end{proof}

\begin{proof}[Proof of Theorem~$\ref{thm:unicity}$]Let $X,Y$ be two geometrically rational real surfaces, and assume that $\#
X(\r)\leq 2$. We assume that $X$ is birational to $Y$ and that $X(\r)$ is homeomorphic to $Y(\r)$, and prove that $X(\r)$
is isomorphic to $Y(\r)$.

Remark that all geometrically rational surfaces with connected
real part are birational to each others, thus in this case the
statement follows from the unicity of rational models \cite{bh}. We may thus assume that $\#X(\r)=2$. Denote by $\eta_{X}\colon X\to X_{0}$ and $\eta_{Y}\colon Y\to Y_{0}$ birational morphisms to minimal real surfaces.

Since $X_{0}$ and $Y_{0}$ are birational, $X_{0}(\r)$ and $Y_{0}(\r)$ are isomorphic (Theorem~\ref{Thm:Diff+Bir=DiffBir}), so we may assume that $X_{0}=Y_{0}$. The result now follows from Lemma~\ref{Lem:Magic}.
\end{proof}

\begin{proof}[Proof of Corollary~$\ref{Cor:Models}$]
If $M$ is connected, and $M$ is non-orientable or $M$ is orientable with genus $g(M) \leq 1$, then it admits a unique geometrically rational model by \cite[Corollary~8.1]{bh}. Moreover, this model is in fact rational.

Conversely let $M$ be a compact $\mathcal{C}^\infty$-surface and assume that $M$ admits a unique geometrically rational model $X$. 
The existence of such a model implies, by Comessatti's theorem \cite{Com2}, that any connected component of $M$ is non-orientable or is orientable with genus $g \leq 1$.
The unicity means that for any geometrically rational model $Y$ of $M$, then $Y(\r)$ is isomorphic to $X(\r)$.
In particular, this implies that all geometrically rational models of $M$ belong to a unique birational class. 
From Theorem~\ref{Thm:Birapport} and Proposition~\ref{Prop:DP1DP2rho1}, this means that $X$ is rational. 
It remains to observe that when $X$ is rational, $X(\r)$ is connected, and is either non-orientable or orientable of genus $\le 1$. When $X$ is minimal, this follows from Proposition~\ref{Prp:TopMinimal}. 
Then, blowing-up points on a surface either does nothing on the topology of the real part (if the points blown-up are imaginary), or it gives a non-orientable real part (if the points blown-up ar real).
\end{proof}
We finish by a result on non-density. In \cite{km1}, it is proved that $\aut\bigl(X(\r)\bigr)$ is dense in $\diff\bigl(X(\r)\bigr)$ when $X$ is a geometrically rational surface with $\#X(\r)=1$ (or equivalently when $X$ is rational). 
In the cited paper, it is said that $\#X(\r)=2$ is probably the only other case where the density holds. 
The following collect the known results in this direction. The first two of them are new.

\begin{prop}\label{Prp:Density}
Let $X$ be a geometrically rational surface. 
\begin{itemize}
\item If $\#X(\r)\geq 5$, then $\aut\bigl(X(\r)\bigr)$ is not dense in $\diff\bigl(X(\r)\bigr)$;
\item if $\#X(\r)=4$, and either $X$ is the blow-up of a minimal conic bundle or $\rho(X)=1$, then $\aut\bigl(X(\r)\bigr)$ is not dense in $\diff\bigl(X(\r)\bigr)$;
\item if $\#X(\r)=3$ and $X$ is minimal, then $\aut\bigl(X(\r)\bigr)$ is not dense in $\diff\bigl(X(\r)\bigr)$ for a general $X$;
\item if $\#X(\r) =1$, then $\aut\bigl(X(\r)\bigr)$ is dense in $\diff\bigl(X(\r)\bigr)$. 
\end{itemize}

\end{prop}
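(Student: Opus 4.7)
The plan is to cite \cite{km1} for the fourth assertion and to prove each of the three non-density statements by exhibiting a proper closed subgroup of $\diff\bigl(X(\r)\bigr)$ containing $\aut\bigl(X(\r)\bigr)$. This subgroup will be either $\aut\bigl(X(\r)\bigr)$ itself when we can show it is countable, or the stabiliser of a foliation (or unordered pair of foliations) on $X(\r)$ coming from a conic bundle on a minimal model. First I would fix a birational morphism $\eta \colon X \to X_{0}$ to a minimal real geometrically rational model, note that $\#X_{0}(\r)=\#X(\r)$, and split into cases using Theorem~\ref{Thm:ClassicMinimal} and Proposition~\ref{Prp:TopMinimal}.

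When the minimal model $X_{0}$ is a del Pezzo surface with $\rho(X_{0})=1$, Corollary~\ref{Cor:DPcountable} shows $\aut\bigl(X(\r)\bigr)$ is countable, hence cannot be dense in the infinite-dimensional Fr\'echet group $\diff\bigl(X(\r)\bigr)$; this handles the $\rho(X)=1$ subcase of the second bullet, together with the part of the first bullet where $\#X(\r)=5$ and $X_{0}$ is the minimal del Pezzo of degree~$1$. In all other subcases of the first bullet and in the blow-up-of-conic-bundle subcase of the second, $X_{0}$ is a minimal conic bundle with $2r=2\#X(\r)\geq 8$ singular fibres, so the final assertion of Theorem~\ref{Thm:Birapport} gives $\bir(X_{0})=\bir(X_{0},\pi_{0})$. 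Every element of $\aut\bigl(X(\r)\bigr)\subset \bir(X)=\bir(X_{0})$ therefore preserves the rational fibration $\pi_{0}\circ\eta$, and hence preserves the associated singular foliation $\mathcal{F}$ on $X(\r)$. I would then note that the stabiliser of $\mathcal{F}$ in $\diff\bigl(X(\r)\bigr)$ is a proper closed subgroup: closedness comes from the $C^{\infty}$-closedness of the condition ``$d\psi$ preserves the tangent line field of $\mathcal{F}$'', and properness from the time-one map of any compactly supported vector field transverse to a nonsingular leaf.

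For the third bullet, $X$ is minimal with $\#X(\r)=3$, so Theorem~\ref{Thm:ClassicMinimal} forces $X$ to be a del Pezzo surface of degree $2$ with $\rho(X)=2$. By Lemma~\ref{Lem:KX4} it carries two conic bundles $\pi_{1},\pi_{2}$ swapped, up to an automorphism of the base, by the Geiser involution $\iota\in\aut(X)$. The link-by-link analysis used in the proof of Theorem~\ref{Thm:Birapport} shows that $\bir(X)=\bir(X,\pi_{1})\cup\bir(X,\pi_{1})\cdot\iota$, so $\aut\bigl(X(\r)\bigr)$ preserves the unordered pair of foliations $\{\mathcal{F}_{1},\mathcal{F}_{2}\}$ induced by $\pi_{1},\pi_{2}$. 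For a general $X$ these foliations are distinct and transverse at the generic point, so their joint stabiliser is still a proper closed subgroup of $\diff\bigl(X(\r)\bigr)$, and non-density follows. The main obstacle I anticipate is making the transition from the algebraic statement ``every birational self-map preserves $\pi_{0}$'' to the analytic conclusion ``the closure of $\aut\bigl(X(\r)\bigr)$ is not all of $\diff\bigl(X(\r)\bigr)$'' fully precise; in particular one must verify that the real-point singular fibres, around which the foliation degenerates, do not spoil the closedness of the leaf-preserving condition and that a genuinely foliation-breaking smooth perturbation can be localised away from those singular points.
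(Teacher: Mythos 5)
Your treatment of the first, second and fourth bullets is essentially the paper's own argument: countability via Corollary~\ref{Cor:DPcountable} when the minimal model is a del Pezzo surface with $\rho=1$, and preservation of the conic bundle fibration via the last assertion of Theorem~\ref{Thm:Birapport} when the minimal model is a conic bundle with at least $8$ singular fibres; the paper leaves the analytic step implicit where you spell out a foliation-stabiliser argument, which is a reasonable way to make it precise.

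The third bullet, however, contains a genuine gap. A minimal surface with $\#X(\r)=3$ is, by Theorem~\ref{Thm:ClassicMinimal} and Proposition~\ref{Prp:TopMinimal}, a minimal conic bundle with $6$ singular fibres and $\rho(X)=2$, hence $(K_X)^2=2$; but it is \emph{not} in general a del Pezzo surface: for instance an exceptional conic bundle with $r=3$ carries a section $s$ with $s^2=-3$, so $K_X\cdot s=1>0$ and $-K_X$ is not even nef. By Lemma~\ref{Lem:KX4} such a surface carries only one conic bundle structure and admits no Geiser involution, so the pair of foliations you want to stabilise need not exist on $X$ itself. Even after replacing $X(\r)$ by the isomorphic del Pezzo model of Corollary~\ref{Cor:23comp2conicbundles}, your identity $\bir(X)=\bir(X,\pi_1)\cup\bir(X,\pi_1)\cdot\iota$ is unjustified: with only $6$ singular fibres Theorem~\ref{Thm:Birapport} gives no control on $\bir(X)$, and the link decomposition allows type II links with imaginary base points leading to \emph{other} del Pezzo surfaces of degree $2$, whose Geiser involutions, conjugated back, yield elements of $\aut\bigl(X(\r)\bigr)$ sending the $\pi_1$-pencil to a pencil with imaginary base points, hence preserving neither $\mathcal{F}_1$ nor $\mathcal{F}_2$ (this is exactly the phenomenon handled in the proof of Proposition~\ref{Prp:ExchangeComp}). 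A further warning sign is that your argument, if it worked, would prove non-density for \emph{every} minimal $X$ with three components, whereas the statement only claims it for a general such $X$. The paper's actual argument is of a different nature: the three components are spheres, so diffeomorphisms exchanging them exist, while for a general $X$ no automorphism of $\p^1_\r$ permutes the three intervals of $I(X,\pi)$ nontrivially, so by Proposition~\ref{Prp:ExchangeComp} no element of $\aut\bigl(X(\r)\bigr)$ exchanges the components; since exchanging two components is an open condition in $\diff\bigl(X(\r)\bigr)$, non-density follows.
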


\begin{proof} The case $\#X(\r) =1$ is the main result of \cite{km1}. Assume from now on that $\#X(\r) \geq 3$, and
denote by $\eta\colon X\to X_{0}$ a birational morphism to a minimal real surface, and observe that $\#X_{0}(\r) = \#X(\r) \geq 3$.   
Let us discuss the two cases for $X_{0}$ given by Theorem~\ref{Thm:ClassicMinimal}.

Assume that $X_{0}$ is a del Pezzo surface with $\rho(X_{0})=1$. If the degree of $X_{0}$ is $1$ then $\bir(X_{0})$ is finite (Corollary~\ref{Cor:DPcountable}), thus $\aut\bigl(X(\r)\bigr)$ cannot be dense. 
If $X_{0}$ has degree $2$, then $\#X_{0}(\r)=4$ (Proposition~\ref{Prp:TopMinimal}), so $\#X(\r)=4$ too. Since $\aut\bigl(X_{0}(\r)\bigr)=\aut(X_{0})$ is finite, $\aut\bigl(X_{0}(\r)\bigr)$ cannot be dense (but maybe $\aut\bigl(X(\r)\bigr)$ could be).

The other case is when $\rho(X_{0})=2$. Then, $X_{0}$ endows a real conic bundle structure $(X_{0},\pi_{0})$. If $\#X(\r)=\#X_{0}(\r)\geq 4$, then $\bir(X_{0})=\bir(X_{0},\pi_{0})$ (Theorem~ \ref{Thm:Birapport}), so $\aut\bigl(X(\r)\bigr)$ is not dense. 
If $\#X_{0}(\r)=3$, then in general $\aut\bigl(X_{0}(\r)\bigr)$ does not exchanges the connected component of $X_{0}(\r)$. 
Consequently, $\aut\bigl(X_{0}(\r)\bigr)$ is not dense (but maybe $\aut\bigl(X(\r)\bigr)$ could be, if the connected components of $X(\r)$ are not homeomorphic).
\end{proof}

\end{document}